%% file: dynamic_ustar.tex
\documentclass[reqno]{amsart}
\usepackage[url=false,giveninits=true,maxnames=5,backend=biber]{biblatex}
\makeatletter
\bibliography{sources}
\renewcommand\part{%
	\if@noskipsec \leavevmode \fi
	\par
	\addvspace{4ex}%
	\@afterindentfalse
	\secdef\@part\@spart}

\def\@part[#1]#2{%
	\ifnum \c@secnumdepth >\m@ne
	\refstepcounter{part}%
	\addcontentsline{toc}{part}{\thepart\hspace{1em}#1}%
	\else
	\addcontentsline{toc}{part}{#1}%
	\fi
	{\parindent \z@ \raggedright
		\interlinepenalty \@M
		\normalfont
		\ifnum \c@secnumdepth >\m@ne
		\Large\bfseries \partname\nobreakspace\thepart
		\par\nobreak
		\fi
		\huge \bfseries #2%
		\par}%
	\nobreak
	\vskip 3ex
	\@afterheading}
\def\@spart#1{%
	{\parindent \z@ \raggedright
		\interlinepenalty \@M
		\normalfont
		\huge \bfseries #1\par}%
	\nobreak
	\vskip 3ex
	\@afterheading}
\makeatother
 
\usepackage{color}
\usepackage[dvipsnames]{xcolor}

\usepackage{ifpdf}
\ifpdf 
    \usepackage{graphicx}   
    \usepackage[pdftex,     
            plainpages=false,   
            breaklinks=true,    
            colorlinks=true,
            linkcolor=red,
            citecolor=green,
           ]{hyperref} 
\else 
    \usepackage{graphicx}       
    \usepackage{hyperref}       
\fi 

\usepackage{graphicx}
\usepackage{aurical}
\usepackage{amsfonts,amsmath}
\usepackage{amssymb}
\usepackage{verbatim}
\usepackage{amsopn}
\usepackage[english]{babel}
\usepackage{amsthm}
\usepackage{enumerate}
\usepackage{mathrsfs}
\usepackage{enumitem}
\usepackage{mathtools}
\usepackage{esint}
\usepackage{bbm}
\usepackage{marginnote}
\usepackage[marginparwidth=2cm]{geometry}
\usepackage[normalem]{ulem}
\usepackage{glossaries}
\usepackage{glossary-mcols}
\usepackage{subcaption}
\usepackage{csquotes}

\date{\today}

\theoremstyle{definition} \newtheorem{definition}{Definition}[section]
\theoremstyle{definition} \newtheorem{remark}[definition]{Remark}
\theoremstyle{plain} \newtheorem{lemma}[definition]{Lemma}
\theoremstyle{plain} \newtheorem{proposition}[definition]{Proposition}
\theoremstyle{plain} \newtheorem{theorem}[definition]{Theorem}
\theoremstyle{plain} \newtheorem{corollary}[definition]{Corollary}
\theoremstyle{definition} 
\theoremstyle{plain} 
\theoremstyle{definition} 
\theoremstyle{definition}

\DeclareMathOperator{\diag}{diag}

\DeclareMathOperator{\inter}{int}

\newcommand{\R}{\mathbb{R}}
\newcommand{\Q}{\mathbb{Q}}
\newcommand{\N}{\mathbb{N}}

\newcommand*{\comme}[1]{\textcolor{cyan}{#1}}

\newcommand{\ind}{1\!\!\mathrm{I}}

\numberwithin{equation}{section} 

\theoremstyle{plain} \newtheorem*{theorem*}{Theorem}
\theoremstyle{plain} 
\theoremstyle{plain} \newtheorem*{mthm*}{Main Theorem}
\theoremstyle{plain} \newtheorem*{conjecture*}{Conjecture}
\theoremstyle{plain} 
\theoremstyle{plain} \newtheorem*{problem*}{Problem}

\makeglossaries

\title[Invariant manifolds and transversality for a traveling-wave ODE]{Invariant manifolds and transversality for a traveling-wave ODE in a controlled reaction-diffusion model}

\subjclass[2020]{35K57,49K15,92-10}

\author{Stefano Bianchini} 
\address{S. Bianchini: S.I.S.S.A., via Bonomea 265, 34136 Trieste, Italy}
\email{bianchin@sissa.it}

\author{Chiara Trifone}
\address{Chiara Trifone: S.I.S.S.A., via Bonomea 265, 34136 Trieste, Italy}
\email{ctrifone@sissa.it}

\thanks{Funding: this work has been partially supported by the GNAMPA-INDAM Grant "Problemi di controllo e regolarit\'a per evoluzione di insiemi"}

\begin{document}

\begin{abstract}
We study the parameter dependence and transversality properties of trajectories of the controlled ODE
$$
U_x = P, \qquad P_x = - f(U) - \beta P + \tilde \alpha(x)U,
$$
arising from traveling-wave solutions of a bistable reaction–diffusion equation with control. Optimal profiles minimizing the $L^1$-norm of the control $\tilde \alpha$ are determined by the unstable and stable manifolds $\Gamma_u,\Gamma_s$, the critical curve
$$
P^*(U) = \sqrt{Uf(U)},
$$
and the associated function
$$
F(U) = \frac{dP^*}{dU} + \frac{f(U)}{P^*(U)}.
$$
We establish generic transversality properties for these objects with respect to perturbations of the speed $\beta$ and the nonlinear source $f$, together with quantitative estimates on the first- and second-order dependence of the invariant manifolds on these parameters. We also analyze the loss of regularity at the critical speed $\beta^{**}$, where the unstable manifold approaches the strongly stable manifold of the equilibrium $(U^*,0)$. These results provide the ODE ingredients used in the analysis of the structure and regularity of optimal profiles in paper \cite{bianchini2026travelingprofilescontrolcost}.
%
%
%
\end{abstract}

\keywords{Reaction-diffusion PDE, traveling profiles, perturbations of ODEs, invariant manifolds for ODEs}

\maketitle


\begin{center}
Preprint SISSA 20/2026/MATE
\end{center}

\tableofcontents
%


\section{Introduction}
\label{S:intro}

In this paper we study the properties of the ODE associated to optimal traveling profiles for the controlled reaction-diffusion equation
\begin{equation}
\label{Equa:parab_contr_1}
U_t = \Delta U + f(U) - \tilde \alpha(t,x) U, \quad U \in [0,1], \ \tilde \alpha \geq 0.
\end{equation}
By optimal profile we mean a traveling profile $U(x - \beta t)$ that minimizes the \emph{Effort Function $E(\beta,f) = \|\tilde \alpha\|_{L^1(\R)}$}. 
Our main results concern the transversality and smooth dependence of trajectories and invariant manifolds and some critical curves which are relevant for the optimization problem.

The assumption on the source term \newglossaryentry{fU}{name=\ensuremath{f(U)},description={nonlinear source function}} $\gls{fU}$ is that it is \emph{bistable}, see \eqref{H1}: $f \in C^3([0,1])$ and satisfies (see Fig. \ref{Fig:fU_Gammasu})
\begin{equation}
\label{Equa:H1_intro}
f(0)=f(1)=0, \ f'(0)<0, \ f'(1)<0,
\end{equation}
and there exists a unique point \newglossaryentry{Ustar}{name=\ensuremath{U^*},description={unique $0$ of the source $f$ in $(0,1)$}} $\gls{Ustar} \in (0,1)$ such that $f(U^*)=0$ and $f'(U^*)>0$. 

The PDE with control \eqref{Equa:parab_contr_1} describes a model of pest eradication, which fits into the large subject of optimal harvesting problems and control of parabolic equations \cite{anitacapasso2019,Ania2019GlobalEF,bressancetraro,BressanChiri22,BressanChiri23,doi:10.1137/16M1061886,coclitegaraspino2018,coron2007,seirin2013,faucris.242204001,Lenhart2001OPTIMALCO}.

We do not discuss the modeling aspects in detail here. Instead, we recall the structural properties of optimal profiles that reduce the problem to an ODE with control and motivate the dynamical-systems analysis carried out below.

\subsection{The ODE for optimal traveling profiles}
\label{Ss:opt_trav_intro}

Before stating the main results, we introduce the ODE for traveling profiles, and the properties of optimal profiles. Four objects naturally arise: two invariant manifolds, a critical curve and a function describing the directions in which the free flow crosses this curve.

The ODE associated to a traveling profile with speed \newglossaryentry{beta}{name=\ensuremath{\beta},description={speed of the traveling profile}} \gls{beta}
\begin{equation*}
-\beta U_x = U_{xx} + f(U) - \tilde \alpha(x) U, \quad U(-\infty) = 0, \ U(+\infty) = 1,
\end{equation*}
can be written in the phase space \newglossaryentry{Uvaria}{name=\ensuremath{U},description={variable $U$ in the system of ODEs \eqref{sys}}} \newglossaryentry{Pvaria}{name=\ensuremath{P},description={variable $P$ in the system of ODEs \eqref{sys}}} $(\gls{Uvaria},\gls{Pvaria})$ as
\begin{equation*}
\begin{cases}
U_x = P, \\
P_x = - f(U) - \beta P + \tilde \alpha(x) U,
\end{cases} \quad (U,P)(-\infty) = (0,0), \ (U,P)(+\infty) = (1,0).
\end{equation*}
For $\tilde\alpha = 0$, the two equilibria $(0,0)$ and $(1,0)$ are easily seen to be saddle points, while $(U^*,0)$ is unstable for $\beta < 0$ and stable for $\beta > 0$. We denote by $\lambda_\pm(\beta,U)$, $U = 0,U^*,1$, the eigenvalues of the three equilibria, and let \gls{Gammau} be the \emph{unstable manifold of $(0,0)$} and \gls{Gammas} be the \emph{stable manifold of $(1,0)$}. See Fig. \ref{Fig:fU_Gammasu}.

Using that the optimal profile \gls{Pbeta} has $x \mapsto U(x)$ monotone increasing (see Section \ref{Ss:optim_Pbeta}), we can rewrite the above system as
\begin{equation*}
\frac{dP}{dU} = - \frac{f(U)}{P} - \beta + \alpha(U) U, \quad \alpha(U(x)) = \frac{\tilde \alpha(x)}{P}.
\end{equation*}
Moreover, it is proved in \cite{bianchini2026travelingprofilescontrolcost} that the optimal control acts in the interior \gls{OcalPopt} of the set \newglossaryentry{KcalPopt}{name=\ensuremath{\mathcal K(\beta)},description={critical set, closed set where the optimal profile $P_\beta$ is equal to $P^*$}} \gls{KcalPopt} where $P_\beta$ lies on the \emph{critical curve} 
\begin{equation*}
\gls{Pstar} = \sqrt{U f(U)}, \quad U \in [U^*,1].
\end{equation*}
More precisely \newglossaryentry{OcalPopt}{name=\ensuremath{\mathcal O(\beta)},description={control set, open set where the control $\alpha$ is acting for the optimal profile $P_\beta$, i.e. $\alpha > 0$}}
\begin{equation}
\label{Equa:opt_control_intro}
\alpha(U) > 0 \quad \Longrightarrow \quad U \in \gls{OcalPopt} = \inter \Big\{ U \in (U^*,1): P_\beta(U) = \sqrt{U f(U)} \Big\}.
\end{equation}

With these conditions, the \emph{Effort Function $\gls{Ebeta}$} can be rewritten as \newglossaryentry{Ebeta}{name=\ensuremath{E(\beta)},description={effort function for a traveling wave speed $\gls{beta} \geq \beta^*$}}
\begin{equation*}
\begin{split}
E(\beta) &= \int_\R \tilde \alpha(x) dx 
= \int_{\gls{OcalPopt}} \frac{\frac{dP^*}{dU} + \frac{f(U)}{P^*} + \beta}{U} dU = \int_{\gls{OcalPopt}} \frac{F(U) + \beta}{U} dU,
\end{split}
\end{equation*}
where \newglossaryentry{Fdef}{name=\ensuremath{F(U)},description={function describing the crossing of the flow $P'$ across $P^*$}}
\begin{equation*}
\gls{Fdef} = \frac{dP^*}{dU} + \frac{f(U)}{P^*(U)} = \frac{3 f(U) + U f'(U)}{2 \sqrt{U f(U)}}, \quad U \in [U^*,1].
\end{equation*}
The fact that \gls{Pbeta} can stay on \gls{Pstar} implies that $F(U) + \beta \geq 0$, and then the optimal trajectory must avoid the intervals where
$$
\big\{ U \in [U^*,1], F(U) + \beta < 0 \big\}.
$$
The structure of the above set will be important in what follows. We note here that
\begin{equation}
\label{Equa:limit_intro_F}
\lim_{U \searrow U^*} F(U) = + \infty, \quad \lim_{U \nearrow 1} F(U) = -\infty,
\end{equation}

Observe that from the condition \eqref{Equa:opt_control_intro} implies that the optimal profile \gls{Pbeta} has the following structure (see page \pageref{Point1:why} of Section \ref{Ss:optim_Pbeta} and Fig. \ref{Fig:KOIdef_intro}).

\begin{figure}
\resizebox{.5\textwidth}{!}{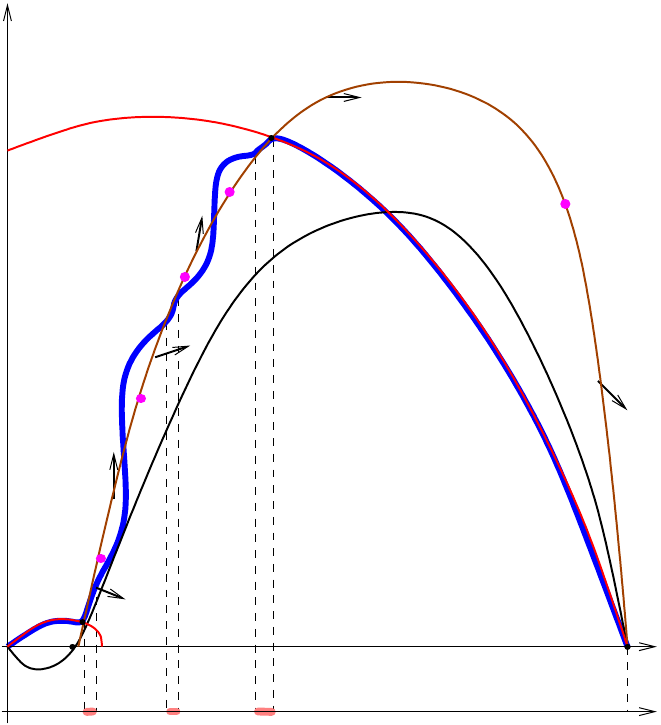}
\caption{The structure of the optimal profile $P_\beta(U)$ (blue) as described in Section \ref{Ss:opt_trav_intro}. In the picture both the nonlinear source $f(U)$ (black) and the critical curve $P^*(U)$ (brown) are depicted. The profile $P_\beta$ cannot remain on the critical curve $P^*$ in the regions where $F(U) + \beta < 0$, i.e. the flux is exiting the region $0 \leq P \leq P^*(U)$: the arrow and the magenta dots denote the regions where $F(U) + \beta$ is positive or negative. The set $\mathcal O(P,\beta)$ is interior of the regions where $P_\beta = P^*$: it is worth noticing that in general this is made of countably many intervals, here just 3 (see Remark \ref{Rem:existence_multiple_inter}).}
\label{Fig:KOIdef_intro}
\end{figure}

\newglossaryentry{Gammau}{name=\ensuremath{\Gamma_u(\beta)},description={unstable manifold of the equilibrium $(U,P) = (0,0)$}} \newglossaryentry{Gammas}{name=\ensuremath{\Gamma_s(\beta)},description={stable manifold of the equilibrium $(U,P) = (1,0)$}}

\begin{itemize}
\item Since it must start at the saddle point $(0,0)$, and the control is not acting if $U \leq U^*$, then the optimal profile \gls{Pbeta} is an arc of $\Gamma_u(U,\beta)$ for $0 \leq U \leq U_u(\beta)$, with \newglossaryentry{Uubeta}{name=\ensuremath{U_u(\beta)},description={(first) intersection of $\Gamma_u(\beta)$ with $P^*$}} $\gls{Uubeta} \geq U^*$ and
$$
P^*(U_u(\beta)) = \Gamma_u(U_u(\beta),\beta).
$$
It corresponds to the first intersection between $\Gamma_u$ and $P^*$.

\item A trajectory solving
\begin{equation}
\label{Equa:ODE_contr_PU_intro}
\frac{dP_\beta}{dU} = - \frac{f(U)}{P_\beta} - \beta + \alpha(U) U,
\end{equation}
for some control $\alpha(U,\beta) > 0$ acting only on $\gls{OcalPopt} \cap \{F(U) + \beta > 0\}$, and $U^* \leq U_u(\beta) < U < U_s(\beta) < 1$ with \newglossaryentry{Usbeta}{name=\ensuremath{U_s(\beta)},description={(last) intersection of $\Gamma_s(\beta)$ with $P^*$}}
$$
P_\beta(U_s(\beta)) = \sqrt{U_s(\beta) f(U_s(\beta))}.
$$
The point $U_s(\beta)$ is the last point less than $U=1$ such that $\Gamma_s(U,\beta) = P^*(U)$, i.e.
$$
U_s(\beta) = \max \, \gls{Cs}= \max \big\{ U < 1: \Gamma_s(U,\beta) = P^*(U) \big\}.
$$
The fact that this point exists is a consequence of \eqref{Equa:limit_intro_F}. Hence in this interval $P_\beta$ is made of countably many open intervals $I_n(\beta) = (U^-_n(\beta),U^+_n(\beta))$ where the control is $0$ and thus $P_\beta$ solves the free ODE
\begin{equation*}
\frac{dP}{dU} = - \frac{f(U)}{P} - \beta, \quad P_\beta(U^\pm_n(\beta)) = P^*(U^\pm_n(\beta)),
\end{equation*}
and a compact set where $P_\beta(U) = P^*(U)$ and the control is positive and equal to $\frac{F(U) + \beta}{U}$.

\item In the interval $[U_s(\beta),1]$ we have $P_\beta(U) = \Gamma_s(U,\beta)$, the stable manifold of $(1,0)$.
\end{itemize}

We conclude that the objects determining the structure and regularity of \gls{Pbeta} and hence the functional form of \gls{Ebeta} are the following:

\begin{enumerate}
\item the unstable manifold \gls{Gammau} of $(0,0)$;
\item the critical function \gls{Pstar};
\item the admissibility function \gls{Fdef};
\item the stable manifold \gls{Gammas} of $(1,0)$.
\end{enumerate}

In the next section of this introduction we state our main results concerning the properties of the above sets: one of the main issues is the transversality of these objects, which yields smooth dependence of the boundary points of the control set $\mathcal O(\beta)$, and hence of the Effort Function $E(\beta)$. 
%

\subsection{Main results}
\label{Ss:main_resu_intro}

The main statements can be divided into results on the \emph{transverse crossing} of $\Gamma_u$, $\Gamma_s$, $F(U)$ and $P^*(U)$, and \emph{smooth dependence} of these crossings with respect to the parameters $\beta,f$.

\subsubsection{Transversal speeds}
\label{Sss:transv_cross_intro}

In the set \gls{Dcal} of admissible parameters $(\beta,f)$,
\begin{equation*}
\gls{Dcal} = \Big\{ (\beta,f) \in \R \times C^3([0,1]), \beta > \beta^*(f), f \ \text{satisfying Assumption \eqref{H1}} \Big\},
\end{equation*}
we are interested in the points ($\beta,f)$ which satisfy the following properties.

\begin{definition}
\label{Def:trans_calT_intro}
Let $f$ be an admissible source satisfying \eqref{Equa:H1_intro}. We say that $\beta > \beta^*$ is \emph{transversal}, and write \newglossaryentry{Tcal}{name=\ensuremath{\mathcal T},description={set of speeds $\beta$ such that the roots of $F(U) + \beta$, $P^*(U) - \Gamma_u(U)$ are finite and not degenerate}} $\beta \in \gls{Tcal} = \mathcal T_f$, if
\begin{enumerate}
\item there exist finitely many $U_0=U^*<U_1<...<U_{2N}=1$ such that
\begin{equation*}
F(U)+\beta \begin{cases} =0 & \text{for }  U=U_i, \ i=1,...,2N-1, \\
>0  & \text{for } U\in P_i:=(U_{2i},U_{2i+1}), \ i=0,...,N-1, \\
<0  & \text{for } U\in N_i:=(U_{2i+1},U_{2i+2}), \ i=0,...,N-1;
\end{cases}
\end{equation*}
\item the tangency points are not degenerate:
\begin{equation*}
(-1)^i F'(U_i) > 0 \quad \forall i=1,..., 2N-1;
\end{equation*}
\item the sets
\begin{equation*}
\gls{Cs} := \big\{ U \in (U^*, 1): \Gamma_s(U,\beta)=P^*(U) \big\} \quad \text{and} \quad \gls{Cu} := \big\{ U\in (U^*, 1): \Gamma_u(U,\beta)=P^*(U) \big\}
\end{equation*}
are finite;
\item the points in $C_s \cup C_u$ correspond to transverse crossings:
\begin{equation*}
F(U) + \beta \neq 0 \quad \forall U \in C_s \cup C_u.
\end{equation*}
\end{enumerate}
\end{definition}

Our main results is the following density property.

\begin{theorem}
\label{Theo:transv_generic_intro}
There is a dense open set \newglossaryentry{Tfrak}{name=\ensuremath{\mathfrak T},description={set of functions for which $\R \setminus \mathcal T$ is finite}} \gls{Tfrak} in the space of admissible sources $f$ w.r.t. the norm $C^3([0,1])$, such that: \newglossaryentry{Tcalf}{name=\ensuremath{\mathcal T_f},description={set of speeds $\beta$ such that the roots of $F(U) + \beta$, $P^*(U) - \Gamma_u(U)$ are finite and not degenerate}}
\begin{enumerate}
\item it contains a dense set of polynomials;
\item for all $f \in \mathfrak T$ there exists a finite sets \gls{Ncal0} (concerning $F(U)$) and \gls{Ncal1} (concerning $\Gamma_u,\Gamma_s$) such that the transversality set \gls{Tcalf} of $f$ is
$$
\gls{Tcalf} = [\beta^*,\infty) \setminus (\mathcal N_0 \cup \mathcal N_1);
$$
\item the set \newglossaryentry{Dcal}{name=\ensuremath{\mathcal D},description={set of admissible sources and admissible speed speeds $\beta$}}
\begin{equation*}
\bigcup_{f \in \mathfrak T} \mathcal T_f \times \{f\} \subset \gls{Dcal} = \Big\{ (\beta,f) \in \R \times C^3([0,1]), \beta > \beta^*(f), f \ \text{satisfying Assumption \eqref{H1}} \Big\}
\end{equation*}
is open and dense in the product topology, and of full measure in the subset \newglossaryentry{Dcalk}{name=\ensuremath{\mathcal D(k)},description={domain of admissible polynomials $f = \sum_{i=1}^k a_i U^i$ and admissible speeds $\beta$}}
\begin{equation*}
\gls{Dcalk} = \bigg\{ (\beta,a_1,\dots,a_k) \in \R^{k+1}, f = \sum_{i=1}^K a_i U^i,\ \beta \geq \beta^*(f), \ f \ \text{satisfies Assumption (\ref{H1})} \bigg\}.
\end{equation*}
\end{enumerate}
\end{theorem}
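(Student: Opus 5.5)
The plan is to treat the two kinds of conditions in Definition \ref{Def:trans_calT_intro} separately. First, conditions (1)--(2) involve only $F$. Second, conditions (3)--(4) involve the invariant manifolds $\Gamma_u(\beta),\Gamma_s(\beta)$. In each case I will find an open dense set of sources for which the exceptional speeds form a finite set, and then intersect the two sets.

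\textbf{The sets $\mathcal N_0$ and $\mathcal T_f$.} For the $F$-part I take $\mathfrak T_0$ to be the set of admissible $f$ such that $F$ is a Morse function on $(U^*,1)$ with pairwise distinct critical values. Define $\mathcal N_0$ as the set of these critical values, with sign changed. Since $F(U)\to+\infty$ as $U\searrow U^*$ and $F(U)\to-\infty$ as $U\nearrow1$, by \eqref{Equa:limit_intro_F}, the critical points of $F$ lie in a compact subinterval. Hence there are finitely many of them.
\begin{itemize}
\item For $\beta\notin\mathcal N_0$ the zeros of $F+\beta$ are simple, and finitely many. They alternate in sign and start and end correctly, which gives (1) and (2).
\item Openness in $C^3$ holds because $F$ involves $f'$, so $F$ depends $C^2$-continuously on $f$ away from the endpoints. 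Near the endpoints the asymptotics \eqref{Equa:limit_intro_F} are stable under perturbation, since $f'(U^*)>0$ and $f'(1)<0$.
\item Density follows by perturbing $f$ with a small polynomial supported away from $U^*,1$ in effect, and then applying Sard or standard Morse genericity. Approximating by polynomials (Weierstrass in $C^3$) gives item (1) of the theorem.
\item For polynomials the condition fails only on a proper real-analytic (semi-algebraic) subset of the coefficients. This gives the full-measure statement in $\mathcal D(k)$.
\end{itemize}

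\textbf{The set $\mathcal N_1$.} For the manifold part I use the smooth (real-analytic in $\beta$) dependence of $\Gamma_u,\Gamma_s$ on $(\beta,f)$ proved later in the paper, away from the critical speed $\beta^{**}$. Consider the functions
\[
g_u(U,\beta)=\Gamma_u(U,\beta)-P^*(U),\qquad g_s(U,\beta)=\Gamma_s(U,\beta)-P^*(U).
\]
At a zero, $\partial_U g_u=-(F(U)+\beta)\,\cdot$ (a positive factor), because $dP^*/dU$ differs from the slope of the free flow exactly by $(F+\beta)$ divided by $P^*$. So a degenerate crossing, i.e. a failure of (4), means simultaneously $g=0$ and $F+\beta=0$.

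I then use the monotonicity of the manifolds in $\beta$: $\partial_\beta\Gamma_u<0$ and $\partial_\beta\Gamma_s>0$, by comparison in the $(U,P)$ ODE. With this, the set of $\beta$ for which $\Gamma_u$ passes through one of the finitely many points $(U_i(\beta),P^*(U_i(\beta)))$ is the zero set of a function $\beta\mapsto g_u(U_i(\beta),\beta)$. This function is strictly monotone once the $U_i$-dependence is controlled. If monotonicity cannot be fully controlled, I will instead argue that for generic $f$ these zeros are isolated and nondegenerate, and hence finite on compact $\beta$-sets. This gives a finite $\mathcal N_1$ on bounded ranges of $\beta$.

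For large $\beta$, $F+\beta>0$ on all of $(U^*,1)$, because $F$ is bounded above on compact subsets and tends to $+\infty$ only at $U^*$. Then every crossing is transversal, so no exceptional speeds occur there. For (3), transversal zeros on a region where the crossing point stays in a compact subset of $(U^*,1)$ are finitely many. Near $U^*$ and $1$, the local behaviour of $P^*\sim\sqrt{f'(U^*)U^*(U-U^*)}$, compared with the eigen-directions, excludes accumulation.

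\textbf{Main obstacle.} I expect the hardest step to be the genericity in $f$ of the condition ``$g_u$ and $F+\beta$ do not vanish to higher order simultaneously at $\beta$''. This needs a transversality theorem: perturbing $f$ must move $\Gamma_u$ independently of $F$. I would first try a local perturbation of $f$ supported in $(0,U^*)$. Such a perturbation shifts $\Gamma_u$ at the crossing height through the first-order variation formula (Melnikov-type integral) developed later in the paper, while leaving $F$ on $[U^*,1]$ unchanged. The parametric transversality theorem then gives density, and openness follows from the $C^1$-dependence of the manifolds. Finally, $\mathfrak T=\mathfrak T_0\cap\mathfrak T_1$ is open and dense. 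The product-topology openness and density in $\mathcal D$ follow from the continuity of $\mathcal N_0,\mathcal N_1$ in $f$, and the full-measure claim in $\mathcal D(k)$ follows from analyticity in the coefficients.
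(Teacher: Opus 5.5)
Your treatment of conditions (1)--(2) is essentially the paper's: the Morse condition on $F$, the coercivity \eqref{Equa:limi_F} confining critical points to a compact set, and genericity among polynomials plus Weierstrass. One point is imprecise. A proper semi-algebraic set of coefficients need not be Lebesgue-null, so you need a dimension bound; the paper gets it by showing $\{Q=\partial_U Q=0\}$ has codimension $2$ in $(U,a)$ (Proposition \ref{Prop:F_transv_poly}).

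The genuine gap is in $\mathcal N_1$. Your claim that $F+\beta>0$ on all of $(U^*,1)$ for large $\beta$ is false. By \eqref{Equa:limi_F}, $F(U)\to-\infty$ as $U\nearrow 1$, so for every $\beta$ the function $F+\beta$ has a zero at distance $O(\beta^{-2})$ from $U=1$. That is the same scale at which $\Gamma_s$ crosses $P^*$, and the last crossing $U_s(\beta)$ exists for every $\beta$. So tangencies of $\Gamma_s$ at large speeds are not excluded, and you have not shown $\mathcal N_{1,s}$ is bounded. The paper needs a quantitative estimate here: $\Gamma_s\ge\beta(1-U)$ forces crossings into $U\ge 1-C_2/\beta^2$, where $-F/\beta>1$, which gives $\mathcal N_{1,s}\subset[\beta^*,\bar\beta]$ (Proposition \ref{Prop:transv_Gamma_gen}). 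On bounded ranges, isolated bad speeds still do not give finiteness unless you exclude accumulation at the points of $\mathcal N_0$ where branches $U_i(\beta)$ merge. Finally, the parametric-transversality step you flag as the main obstacle is not carried out, and $\mathfrak T_1$ is never defined.

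The missing idea is that this obstacle does not exist: the Morse property of $F$ is the only genericity in $f$ you need. At a crossing $\Gamma(U)=P^*(U)$ you have exactly $\partial_U(\Gamma-P^*)=-(F(U)+\beta)$, with no factor $1/P^*$ in the $dP/dU$ formulation. Hence $h_i(\beta)=\Gamma(U_i(\beta),\beta)-P^*(U_i(\beta))$ satisfies $h_i'(\beta)=\partial_\beta\Gamma(U_i(\beta),\beta)$ at every one of its zeros, whatever $U_i'$ is. This is $<0$ for $\Gamma_u$ and $>0$ for $\Gamma_s$.

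A $C^1$ function whose derivative has a fixed strict sign at all its zeros vanishes at most once on an interval. So each branch $U_i$ on each component of $\R\setminus\mathcal N_0$ contributes at most one bad speed. For $\Gamma_u$, restrict to the speeds where $\Gamma_u(U_i(\beta),\beta)$ is defined; at the boundary of this set $h_i=-P^*<0$, and the same sign argument applies. Since $\mathcal N_0$ is finite, $\mathcal N_1$ is finite, including on the unbounded component.

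The paper packages the same fact through $H(U)=\Gamma_u(U,-F(U))-P^*(U)$. At a zero of $H$ one has $H'=-\partial_\beta\Gamma_u\,F'$, and where also $F'=0$ one gets $H''=-\partial_\beta\Gamma_u\,F''\neq0$. So the Morse condition alone makes the zeros of $H$ isolated, and they are finite on the compact range of relevant $U$. With this, your perturbation of $f$ in $(0,U^*)$ and the Melnikov-type transversality argument can be dropped.
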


It is worth noticing that the example of Remark \ref{Rem:existence_multiple_inter} yields a source function $f$ and speed $\beta$ such that $F(U) + \beta > 0$ is made by countably disjoint intervals, in particular the above statement is meaning.

\subsubsection{Dependence of $\Gamma_u$, $\Gamma_s$ w.r.t $\beta,f$}
\label{Sss:dep_betaf_intro}

The next set of results concerns the dependence of the invariant manifolds $\Gamma_u$, $\Gamma_s$ w.r.t. $\beta,f$: as a consequence of the structure of the optimal profile $P_\beta$, it is particularly interesting the behavior of the intersection points \gls{Uubeta}, \gls{Usbeta}. The critical value of the speed is \newglossaryentry{betastarstar}{name=\ensuremath{\beta^{**}},description={first value of $\beta$ for which $U_{uf}(\beta) = U^*$}} $\beta = \gls{betastarstar}$, corresponding to a heteroclinic connection between $\Gamma_u$ and the strongly stable manifold of $(U^*,0)$ (Remark \ref{Rem:ether_1}): the change in the dynamics leads to a discontinuity in the second derivative of \gls{Ebeta}, as shown in \cite{bianchini2026travelingprofilescontrolcost}.

Let \gls{Uufbeta} be such that $\Gamma_u(U) > 0$ for $U \in (0,U_{uf}(\beta))$.
 
\begin{theorem}
\label{manifoldbound_intro}
Let $\beta \ge \beta^*$ and $f \in C^2([0,1])$ satisfying hypothesis $(\ref{H1})$.

\begin{description}

\item[Derivatives of $\Gamma_u$ w.r.t. $\beta$] we have
\begin{equation*}
- 1 - \bigg[ \ln \bigg(\frac{U}{U^*} \bigg) \bigg]^+ < \frac{\partial_\beta \Gamma_u(U,\beta)}{U} < 0, \quad U \in (0,U_u(\beta)).
\end{equation*}
For all $\beta \leq \underline{\beta} < \beta^{**}$ there is a constant $C(f,\underline{\beta})$ depending only on
\begin{equation*}
\begin{split}
&\|f\|_{C_2}, \quad \max \Big\{ \Gamma_u(U)^{-1}, \beta \in [\beta^*,\underline{\beta}], U \in [\bar r,U_u(\beta)] \Big\} 
\end{split}
\end{equation*}
such that
\begin{equation*}
\big| \partial^2_\beta \Gamma_u(U,\beta) \big| \leq C(f,\underline{\beta}), \quad U \in [0,U_u(\beta)].
\end{equation*}

\item[Derivatives of $\Gamma_s$ w.r.t. $\beta$] we have
\begin{equation*}
0 < \frac{\partial_\beta \Gamma_s(U,\beta)}{1 - U} < 1, \quad U \in [U^*,1).
\end{equation*}
Moreover, there is a constant depending only on $\|f\|_{C^2}$ such that
\begin{equation*}
0 \leq \partial^2_\beta \Gamma_s(U) \leq C(\|f\|_{C^2}), \quad U \in [U^*,1].
\end{equation*}

\item[Derivatives of $\Gamma_u,\Gamma_s$ w.r.t. $f$] there are two constants \newglossaryentry{rbar}{name=\ensuremath{\underline{r}},description={radius of the ball around the equilibria $(0,0)$, $U^*,0)$, $(1,0)$ where the perturbative analysis of the ODE is performed}} \newglossaryentry{betaunder}{name=\ensuremath{\underline{\beta}},description={speed where $U_u$ is sufficiently close to $U^*$}}
\begin{equation*}
\gls{rbar} = \underline{r}(\|f\|_{C^2}), \quad \gls{betaunder} = \underline{\beta}(\|f\|_{C^2}) < \beta^{**}(f)
\end{equation*}
and a constant \newglossaryentry{Cfbeta}{name=\ensuremath{C(f)},description={Lipschitz constant for the dependence of $\Gamma_u,\Gamma_s$ w.r.t. $f \in C^1([0,1])$}} $\gls{Cfbeta} > 0$ depending only on
\begin{equation*}
\begin{split}
&\|f\|_{C_2}, \quad \max \Big\{ \Gamma_u(U)^{-1}, \beta \in [\beta^*,\underline{\beta}], U \in [\bar r,U_u] \Big\}, \\
&\max \bigg\{ \Gamma_u(U)^{-1}, \beta \in \Big[ \beta^{*},2 \sqrt{\max_{[0,U^*]} f'(U)} \Big], U \in [\underline{r},U_u - \underline{r}] \bigg\},
\end{split}
\end{equation*}
such that if $\Gamma_u(U,f + \epsilon \delta f)$, $\Gamma_s(U,f+\epsilon \delta f)$ are the unstable, stable manifold for the source $f + \epsilon \delta f$ satisfying \eqref{H1} with $\delta f \geq 0$, then 
\begin{equation*}
0 \le - \lim_{\epsilon \to 0} \frac{\Gamma_u(U,f+\epsilon \delta f) - \Gamma_u(U,f)}{U} \le C(f) \|\delta f\|_{C^1},
\end{equation*}
for all $U \leq U_u(f)$, and
\begin{equation*}
0 \le \lim_{\epsilon \to 0} \frac{\Gamma_s(U,f+\epsilon \delta f) - \Gamma_s(U,f)}{1 - U} \le C(f) \|\delta f\|_{C^1},
\end{equation*}
for all $U \in (U^*,1)$.
\end{description}
\end{theorem}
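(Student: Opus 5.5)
We work with the invariant manifolds as graphs $P=\Gamma(U,\beta)$ solving the scalar equation
\begin{equation*}
\frac{d\Gamma}{dU} = -\frac{f(U)}{\Gamma} - \beta ,
\end{equation*}
and obtain all estimates from the linearized (variational) equations. The plan is to differentiate with respect to the parameters, solve the resulting linear first-order ODEs explicitly, and then read off signs and bounds.

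\textbf{First derivatives in $\beta$.} Set $w=\partial_\beta\Gamma_u$. Differentiating the equation gives
\begin{equation*}
w_U = \frac{f(U)}{\Gamma_u^2}\, w - 1 .
\end{equation*}
The initial behaviour comes from the eigenvector at the origin. Since $\Gamma_u \sim \lambda_+(\beta,0)U$ with $\lambda_+ = \tfrac{-\beta+\sqrt{\beta^2-4f'(0)}}{2}$, we have $w\sim \partial_\beta\lambda_+\,U$, and $\partial_\beta\lambda_+\in(-1,0)$. By variation of constants,
\begin{equation*}
w(U) = -\int_0^U \exp\Big(\int_V^U \frac{f}{\Gamma_u^2}\Big)\,dV .
\end{equation*}
This yields $w<0$ at once.
\begin{itemize}
\item For $U\le U^*$ we have $f\le 0$, so the exponential factor is at most $1$ and $w/U>-1$.
\item For $U\in(U^*,U_u)$ we have $\Gamma_u\ge P^*=\sqrt{Uf}$, hence $f/\Gamma_u^2\le 1/U$. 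The exponential is then bounded by $U/\max(V,U^*)$, and integrating gives $-w/U < 1+\ln(U/U^*)$.
\end{itemize}

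For $\Gamma_s$ we set $z=\partial_\beta\Gamma_s$, which satisfies the same equation. Since $\Gamma_s<0$ and we integrate backward from $U=1$,
\begin{equation*}
z(U) = \int_U^1 \exp\Big(-\int_U^V \frac{f}{\Gamma_s^2}\Big)\,dV .
\end{equation*}
On $[U^*,1)$ we have $f\ge0$, so $0<z<1-U$.

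\textbf{Second derivatives.} Differentiating once more,
\begin{equation*}
w^{(2)}_U = \frac{f}{\Gamma^2}\,w^{(2)} - \frac{2f}{\Gamma^3}\,w^2 .
\end{equation*}
This has the same integrating factor as before, with source $-2fw^2/\Gamma^3$.
\begin{itemize}
\item For $\Gamma_s$ on $[U^*,1]$: there $\Gamma_s<0$ and $f\ge0$, so the source is positive and $\partial^2_\beta\Gamma_s\ge0$. The upper bound follows from $|w|\le 1-U$ together with $|\Gamma_s|\ge c(1-U)$ near $1$, the latter from the saddle eigenvalue bound depending on $\|f\|_{C^2}$. This makes the integrand $O(1)$.
\item For $\Gamma_u$: away from the origin we use the assumed lower bound on $\Gamma_u$ on $[\bar r,U_u(\beta)]$, together with $\beta\le\underline\beta<\beta^{**}$, to control $1/\Gamma_u^3$.
\end{itemize}

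\textbf{Main obstacle: the singular point $U=0$.} Near $0$ the ratio $f/\Gamma^3\sim 1/U$ is singular, so the crude integral bound does not close. I would treat a ball of radius $\underline r$ by a local invariant-manifold argument. Write $\Gamma_u=U\,\phi(U,\beta)$ with $\phi$ of class $C^2$ in $(U,\beta)$ from the smooth dependence of the unstable manifold on $\beta$ for the $C^2$ vector field. Then $\partial^2_\beta\Gamma_u=O(U)$ near $0$, and this is patched with the integral estimate outside the ball.

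The condition $\beta<\beta^{**}$ matters here. It guarantees that $\Gamma_u$ does not approach $(U^*,0)$, so $\Gamma_u^{-1}$ stays bounded. As $\beta\to\beta^{**}$ the orbit approaches the strongly stable manifold, and the bound must degenerate.

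\textbf{Derivatives in $f$.} Here the linearized equation for $v=\lim_{\epsilon\to0}(\Gamma(f+\epsilon\delta f)-\Gamma(f))/\epsilon$ is
\begin{equation*}
v_U = \frac{f}{\Gamma^2}\,v - \frac{\delta f}{\Gamma} .
\end{equation*}
The same variation-of-constants formulas apply.
\begin{itemize}
\item Since $\delta f\ge0$, the sign of $-\delta f/\Gamma$ gives $v\le0$ for $\Gamma_u>0$ and $v\ge0$ for $\Gamma_s<0$.
\item The initial slope at the saddle changes through $\partial_{f'(0)}\lambda$, which is bounded by $\|\delta f\|_{C^1}$.
\item Away from the equilibria, $|\delta f/\Gamma|\le \|\delta f\|_{C^1}\,\mathrm{dist}/\Gamma$ is controlled by the listed lower bounds on $\Gamma_u$, taken over the two $\beta$-ranges in the statement.
\item Near $0$ and near $1$ we again use $\delta f(U)=O(\|\delta f\|_{C^1}\,U)$, resp. $O(\|\delta f\|_{C^1}(1-U))$, together with $\Gamma\sim\lambda U$, so the source stays bounded.
\end{itemize}
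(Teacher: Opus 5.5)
\textbf{Verdict: genuine gap in the derivatives with respect to $f$.} Your treatment of $\partial_\beta\Gamma_u$, $\partial_\beta\Gamma_s$ and $\partial^2_\beta\Gamma_s$ is essentially the paper's: variation of constants, $f\le 0$ on $[0,U^*]$, and $\Gamma_u\ge P^*$ on $[U^*,U_u]$. The $f$-derivative part, however, does not go through.

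\textbf{Where it fails.} The constant $C(f)$ in the statement is independent of $\beta\ge\beta^*$. The two lower bounds on $\Gamma_u$ it may use leave two regimes uncovered:
\begin{itemize}
\item all $U$ when $\beta>2\sqrt{\max_{[0,U^*]}f'}$;
\item the window $U\in(U_u-\underline r,U_u]$ when $\beta\in(\underline\beta,2\sqrt{\max_{[0,U^*]}f'}]$, i.e.\ a neighbourhood of the interior equilibrium $(U^*,0)$ for speeds close to or above $\beta^{**}$.
\end{itemize}
Your plan treats only neighbourhoods of $0$ and $1$, and elsewhere bounds the source $\delta f/\Gamma_u$ pointwise. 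In both uncovered regimes $\Gamma_u$ is small:
\begin{itemize}
\item $\Gamma_u(U^*)\to0$ as $\beta\nearrow\beta^{**}$;
\item $\Gamma_u(U)\to0$ as $U\to U^*=U_u$ when $\beta\ge\beta^{**}$;
\item $\Gamma_u=O(1/\beta)$ for large $\beta$, by Lemma~\ref{Lem:large_beta_Gammau}.
\end{itemize}
Meanwhile $\delta f(U^*)\neq0$ in general, since the perturbation moves $U^*$. So $\int|\delta f|/\Gamma_u\,dV$ is not bounded uniformly in $\beta$. Once $\beta\ge\beta^{**}$ it even diverges logarithmically at $U^*$.

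\textbf{What is missing.} You need the damping in the kernel,
$e^{\int_V^U f/\Gamma_u^2}=\frac{\Gamma_u(V)}{\Gamma_u(U)}e^{-\int_V^U\beta/\Gamma_u}$.
It is strong exactly where $\Gamma_u$ is small, because $f<0$ on $(0,U^*)$. The paper uses it in three places in the proof of Theorem~\ref{manifoldbound}:
\begin{itemize}
\item Step 2.1 compares $\Gamma_u$ with multiples of $f$, giving the bound $C\|\delta f\|_{C^1}/\beta$ for large $\beta$.
\item Step 2.4 brackets $\Gamma_u$ between the strong and weak stable manifolds of $(U^*,0)$. The kernel then decays like $\big(\frac{U^*-U}{U^*-V}\big)^{\kappa}$ with $\kappa>0$, which beats $1/\Gamma_u\sim 1/(U^*-V)$.
\item Step 2.6 handles $\underline\beta<\beta<\beta^{**}$. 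It uses the monotonicity and the Lipschitz bound of $\Gamma_u$ near $U^*$, and the auxiliary point $\underline U$ with $(U_u-\underline U)/\Gamma_u(U_u)$ bounded.
\end{itemize}
This regime near $\beta^{**}$ is the main difficulty of the theorem, and your proposal does not address it.

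\textbf{Smaller point: the origin.} The singularity at $U=0$ you single out for $\partial^2_\beta\Gamma_u$ is not an obstacle. Your own bound $|\partial_\beta\Gamma_u|\le U$, together with $\Gamma_u\ge cU$ near $0$, makes the integrand $|f|(\partial_\beta\Gamma_u)^2/\Gamma_u^3$ of order $O(1)$. The representation formula therefore closes directly, as in the paper. Your local invariant-manifold argument also works under the $C^3$ regularity of (H1), but it is unnecessary.

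\textbf{Smaller point: the sign of $\Gamma_s$.} In fact $\Gamma_s>0$, since $\Gamma_s/(1-U)\to-\lambda_-(1,\beta)>0$. With the backward formulas
$\partial^2_\beta\Gamma_s(U)=\int_U^1\frac{2f}{\Gamma_s^3}(\partial_\beta\Gamma_s)^2e^{-\int_U^V f/\Gamma_s^2}\,dV$
and
$\delta\Gamma_s(U)=\int_U^1\frac{\delta f}{\Gamma_s}e^{-\int_U^V f/\Gamma_s^2}\,dV$,
both signs are immediate. Your argument assumes $\Gamma_s<0$ and applies a forward-integration sign rule; it reaches the right conclusions only because these two sign errors cancel.
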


The restriction to speeds bounded away from $\beta^{**}$ is necessary: although first-order continuity is preserved across the critical speed, the second derivative may become unbounded.

\begin{theorem}
\label{Theo:disco_cont_intro}
The map
\begin{equation*}
f \mapsto \partial_\beta \Gamma_u(U_u(\beta,f),\beta,f)
\end{equation*}
is continuous in the $C^2$-topology of $f$. The map
\begin{equation*}
f \mapsto \Gamma_u(U_u(\beta,f),\beta,f)
\end{equation*}
is locally Lipschitz but in general not $C^1$.

If the ratio between the two eigenvalues $\lambda_\pm(\beta^{**},U^*)$ of $(U^,0)*$ satisfies 
\begin{equation*}
\frac{\lambda_+(\beta^{**},U^*)}{\lambda_-(\beta^{**},U^*)} < \frac{1}{2},
\end{equation*}
then
\begin{equation*}
\lim_{\beta \nearrow \beta^{**}} \partial^2_\beta \Gamma_u(\beta,U_u(\beta)) = +\infty.
\end{equation*}
\end{theorem}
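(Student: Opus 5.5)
The plan is to write $\Gamma_u(U,\beta,f)$ as the solution of the free equation $\Gamma\,d\Gamma/dU=-f(U)-\beta\Gamma$ leaving $(0,0)$. I then differentiate in $\beta$ along the trajectory and analyse the first and second variations near the equilibrium $(U^*,0)$. The argument is organised around $\beta^{**}$. For $\beta<\beta^{**}$ the manifold $\Gamma_u$ crosses $U=U^*$ with $\Gamma_u>0$ and reaches $P^*$ at $U_u(\beta)$. At $\beta=\beta^{**}$ it enters $(U^*,0)$ along the strongly stable direction, and for $\beta>\beta^{**}$ the first intersection with $P^*$ occurs after a passage near the node $(U^*,0)$.

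\textbf{First variation.} Set $w=\partial_\beta\Gamma_u$. It solves the linear equation
\begin{equation*}
\frac{dw}{dU}=\frac{f(U)}{\Gamma_u^2}\,w-1,
\end{equation*}
so $w(U)=-\int_0^U \exp\big(\int_s^U f/\Gamma_u^2\big)\,ds$. For $\beta<\beta^{**}$ this is smooth in $f$. As $\beta\nearrow\beta^{**}$ the integrand concentrates near $U^*$, where $\Gamma_u\to0$. I would linearise at $(U^*,0)$ with eigenvalues $\lambda_+$ (weak) and $\lambda_-$ (strong). Passing to the time parametrisation $x$, the quantity $w$ becomes the $\beta$-derivative of the Dulac-type transition map through a neighbourhood of the node. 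That derivative is a regular term plus a term of order $\delta^{\lambda_+/\lambda_--1}\cdot\delta$, where $\delta=\delta(\beta)$ is the distance of $\Gamma_u$ from the strong stable manifold and satisfies $\delta\sim c(\beta^{**}-\beta)$ with $c\ne0$ by the monotonicity in the first bound of Theorem \ref{manifoldbound_intro}. Because $\lambda_+/\lambda_-<1$, this term tends to $0$. The same expansion holds on the side $\beta>\beta^{**}$, so $w$ at $U_u(\beta)$ is continuous across $\beta^{**}$. Continuity in $f$ in the $C^2$ topology then follows from continuous dependence of the eigenvalues and of the local $C^1$ linearisation (Hartman in the plane) on $f\in C^2$.

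\textbf{Lipschitz but not $C^1$.} For the map $f\mapsto\Gamma_u(U_u(\beta,f),\beta,f)$, the same transition-map representation gives $\Gamma_u(U_u)=A(f)+B(f)\,|\delta(f)|^{\lambda_+/\lambda_-}\cdot(\dots)$. Here $\delta(f)$ is the signed distance from the strong stable manifold, which is Lipschitz in $f$ by the $f$-estimates of Theorem \ref{manifoldbound_intro}. The bounds on $w$ give local Lipschitz continuity. To show the map is not $C^1$, I would choose a one-parameter family $f_\epsilon$ crossing the heteroclinic condition transversally and exhibit the differing one-sided derivatives. These arise because the branch $\delta<0$ has no analogous term: there the trajectory passes on the other side of the strong manifold and the intersection point changes type.

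\textbf{Blow-up of the second derivative.} Differentiating the Dulac expansion once more in $\beta$ produces a term of order $\delta^{\lambda_+/\lambda_- -1}\cdot\delta'$ in the first derivative of the singular part. After the second differentiation this becomes a term of order $\delta^{2\lambda_+/\lambda_- -1}$, coming from the quadratic dependence in the rescaled passage time, which blows up exactly when $\lambda_+/\lambda_-<1/2$. The sign must be $+\infty$; I would fix it using $\partial_\beta\Gamma_u<0$ and the convexity of the passage map.

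The main obstacle is this precise second-order normal form near the node $(U^*,0)$. It requires a $C^2$ linearisation, which needs nonresonance: $\lambda_-\neq2\lambda_+$ is guaranteed by the strict inequality. I would try Sternberg/Belitskii linearisation first, and fall back on direct integral estimates of $\exp(\int f/\Gamma_u^2)$ in the variable $\log\Gamma_u$ if the linearisation is not available.
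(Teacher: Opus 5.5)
Your continuity argument is fine in substance. The paper likewise bounds the singular part of $\partial_\beta\Gamma_u(U_u)$ by a positive power of $\delta=\Gamma_u(U^*-\underline{r})-M^*_-(U^*-\underline{r})$, using the upper bound in \eqref{Equa:Gammau_bound} and \eqref{Equa:formula_Gammabeta}, with no linearization theorem.

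The genuine gap is in the blow-up step, where your exponents do not hold together. Take $\delta\sim c(\beta^{**}-\beta)$ as the entry distance from the strong stable manifold, and set $\rho=\lambda_+/\lambda_-$. A singular term of order $\delta^{\rho}$ in the first derivative would give $\delta^{\rho-1}$ after one more $\beta$-derivative, and this diverges for every $\rho\in(0,1)$. So the threshold $\tfrac12$ does not come out, and the passage to $\delta^{2\rho-1}$ via ``quadratic dependence in the rescaled passage time'' is asserted, not derived.

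The missing idea is the correct scale. The trajectory follows $M^*_-$ until $U^*-V$ is comparable to the exit height $\Gamma_u(U^*)$, and the linear flow at the node gives $\Gamma_u(U^*)\sim\delta^{1/(1-\rho)}$. Hence $e^{\int_V^{U^*}f/\Gamma_u^2}\sim(\Gamma_u(U^*)/(U^*-V))^{\rho}$ and $|\partial_\beta\Gamma_u(U^*)|\sim\Gamma_u(U^*)^{\rho}=\delta^{\rho/(1-\rho)}$. So $\delta^\rho$ is only an upper bound, and your exponents $\rho$ and $2\rho-1$ are right when measured in $\Gamma_u(U^*)$, not in $\delta$.

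This is exactly the paper's proof of Proposition \ref{Prop:blowupsecond}:
\begin{itemize}
\item the lower bound $\Gamma_u\ge\Gamma_u(U^*)+M^*_-$ from \eqref{Equa:Gammau_bound} gives $|\partial_\beta\Gamma_u(V)|\ge C_0\Gamma_u(U^*)^{\rho}$ near $U^*$;
\item in \eqref{Equa:formula_beta2u} the integrand $-2f(\partial_\beta\Gamma_u)^2\Gamma_u^{-3}$ is positive on $(0,U^*)$, which fixes the sign with no convexity argument;
\item an integration by parts shows $\int_{U^*-\underline{r}}^{U^*}\frac{-2f(V)}{\Gamma_u(V)^3}e^{\int_V^{U^*}f/\Gamma_u^2}\,dV$ is of order $1/\Gamma_u(U^*)$, whence $\partial^2_\beta\Gamma_u(U_u)\ge c\,\Gamma_u(U^*)^{2\rho-1}-C$.
\end{itemize}
Your fallback is thus the paper's route. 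The $C^2$ Sternberg linearization, which you would need jointly $C^2$ in $\beta$ with $f$ only $C^3$, is a delicate and unnecessary detour.

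The Lipschitz/non-$C^1$ part also fails as proposed, for a related reason. The expansion $A(f)+B(f)|\delta(f)|^{\rho}(\dots)$ with $\rho<1$ is only H\"older in $\delta$, which contradicts the Lipschitz bound you want. With the correct exponent $1/(1-\rho)>1$, the $\delta$-term is $C^1$ with zero derivative at $\delta=0$. That is precisely what your first paragraph uses for continuity in $\beta$, and the $\beta$-direction also crosses the heteroclinic condition transversally. So no one-sided discrepancy can come from the branch $\delta<0$. There, moreover, $\Gamma_u$ ends at $(U^*,0)$, $U_u=U^*$ and $\Gamma_u(U_u)\equiv0$; there is no later crossing with $P^*$.

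What distinguishes $f$-perturbations, and what the paper uses in Proposition \ref{Prop:Lipechit_U_u}, is that $\delta f(U^*)\neq0$ displaces the equilibrium. In \eqref{Equa:deltaGamma_uu}, the piece $\int_{U^*-\underline{r}}^{U^*}\frac{\delta f(V)}{\Gamma_u(V)}e^{\int_V^{U_u}f/\Gamma_u^2}\,dV$ does not vanish as $\beta\nearrow\beta^{**}$: its weight is bounded below by $1/|\lambda_+(U^*,\beta)|$. The other pieces do vanish, and on the side $\beta\ge\beta^{**}$ the quantity is identically zero. Hence the discontinuity unless $\delta f(U^*)=0$, cf. Remark \ref{Rem:const_U*}. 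Note that this is the variation of $\Gamma_u$ at the point $U=U_u$. For the composite map you would also have to handle $\partial_U\Gamma_u(U_u)\,\partial_fU_u$, since $U_u$ moves with $U^*(f)$.

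Finally, local Lipschitz continuity in $f$ comes from the uniform bound \eqref{Equa:estimat_f_uGG} of Theorem \ref{manifoldbound}, where Step 2.6 reaches $\beta^{**}$. It does not follow from bounds on $w=\partial_\beta\Gamma_u$.
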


The quantity $\partial_\beta \Gamma_u(U_u(\beta,f),\beta,f)$ enters in the computation of the first derivative $\partial_\beta E(\beta)$. Similarly the quantity $\partial^2_\beta \Gamma_u(U_u(\beta,f),\beta,f)$ enters in the computation of $\partial^2 E(\beta)$. Hence the above statement will imply that the Effort Function \gls{Ebeta} is only $C^1$ across the critical speed \gls{betastarstar}.

The Lipschitz dependence w.r.t. $f$ is key to study the problem in the simplified setting of $(\beta,f) \in \gls{Tfrak}$. In this settings the derivatives of $E$ can be computed explicitly, and in particular the convexity of the Effort Function can be analyzed. This convexity property is essential when studying the l.s.c. envelope of the cost functional
\begin{equation*}
\int_0^T \phi \bigg( \int_{\R^d} \tilde \alpha(t,x) dx \bigg) dt. 
\end{equation*}
For a justification of this fact, we refer to the results of \cite{BressanChiri23,bianchini2026travelingprofilescontrolcost}.

\subsection{Outline of the paper}
\label{Ss:outline_intro}

The paper is organized as follows.

Section \ref{S:struct_opt} serves as an introduction to the main results for optimal traveling profiles, which in turn justifies the analysis of the next two sections: it can be thought as an extended introduction to the structure of optimal profiles, presented in Section \ref{Ss:opt_trav_intro}. 

Section \ref{S:transver_flux} addresses the transversality properties of $F$ and the manifolds $\Gamma_u$ and $\Gamma_s$, as defined in Definition \ref{Def:trans_calT_intro}. The last result of the section is the proof of Theorem \ref{Theo:transv_generic_intro}. \\
First, in Proposition \ref{zeros}, via an almost immediate application of Sard's Lemma, we show  that the set of \emph{critical velocities}, that is those values of $\beta$ that do not satisfy the transversality condition 
\begin{equation*}
\gls{Fdef} + \beta = 0 \quad \Rightarrow \quad F'(U) \not= 0, 
\end{equation*}
is compact with zero Lebesgue measure for every smooth source $f$. Subsequently, with Proposition \ref{Prop:F_transv_poly}, we make a substantial improvement of the above result by restricting ourselves to the family of polynomial sources. 
 \newglossaryentry{NTFk}{name=\ensuremath{\mathcal{NT}_k},description={non-transversality set of polynomials of degree $k$}} 
\\
Next we address the problem of transversality of \newglossaryentry{Cs}{name=\ensuremath{C_s},description={intersection points of $\Gamma_s$ with $P^*$}} \newglossaryentry{Cu}{name=\ensuremath{C_u},description={intersection points of $\Gamma_u$ with $P^*$}} \gls{Gammau}, \gls{Gammas} and \gls{Pstar}. 
In the proof of Proposition \ref{Prop:transv_Gamma_gen}, we exploit the Implicit Function Theorem to show that for $\beta$ outside of a compact set of zero Lebesgue measure, \gls{Cu} and \gls{Cs} are finite and the intersections are always \emph{transverse}, that is
\begin{equation*}
U \in C_u\cup C_s \quad \Rightarrow \quad F(U)+\beta \neq 0.
\end{equation*} 
As in the above, the result can be strengthened considering polynomial sources. In Proposition \ref{Prop:Gamma_trans_poly} we prove that if $f$ is a polynomial, the cardinality of $C_u(\beta)$ and $C_s(\beta)$ are uniformly bounded in $\beta$, and, in this case, the family of velocities for which a tangential crossing with $P^*$ happens either for $\Gamma_u$ or for $\Gamma_s$ is finite.\\
In Section \ref{Ss:transc_calT} we introduce the family of \emph{transversal velocities} for a source $f$, that is  \gls{Tcal}$=\mathcal{T}_f$, as in Definition \ref{Def:trans_calT_intro} (see also Definition \ref{Def:trans_calT}). 
Collecting the results previously obtained throughout this section, we are able to prove Theorem \ref{Theo:transv_generic_intro}. 
In Remark \ref{Rem:existence_multiple_inter} we provide the explicit example of a source for which the set $\{F(U) + \beta > 0\}$ consists of countably many connected components, thus also implying that $\mathfrak T$ is strictly contained in the family of all admissible sources.

Section \ref{Ss:stab_sti_Gamma_us} is devoted to the proof of Theorem \ref{manifoldbound_intro} and Theorem \ref{Theo:disco_cont_intro}, i.e. the analysis of the stability of $\Gamma_u$ and $\Gamma_s$ both with respect to the velocity $\beta$ and the source $f$. \\
In Theorem \ref{manifoldbound}, we provide explicit bounds for the derivative of the manifolds, both with respect to $\beta$ (Estimates \eqref{Equa:Gammau_beta}, \eqref{Equa:Gammas_beta}) and with respect to $f$ (Estimates \eqref{Equa:estimat_f_uGG}, \eqref{Equa:estimat_f_sGG}).
In Corollary \ref{Cor:Frechet_Gamma}, we sum up what we have proved above to show Fr\'echet  differentiability of the maps $(U,\beta,f) \mapsto \Gamma_u(U,\beta,f)$ and $(U,\beta,f) \mapsto \Gamma_s(U,\beta,f)$ outside the critical value $\beta^{**}$. This concludes the proof of Theorem \ref{manifoldbound_intro} concerning the first order derivatives. \\
Next, in Theorem \ref{Theo:second_der_Gammaus}, we prove that the second derivative $\partial_\beta^2 \Gamma_u(U,\beta)$ exists in $[0,\gls{Uubeta}]$ when $\beta \not= \beta^{**}$, while $\partial_\beta^2 \Gamma_s(U,\beta)$ exists in $[0,1]$. Moreover, defining \newglossaryentry{Ubarubeta}{name=\ensuremath{\overline{U}_u(\beta)},description={last point of intersection of $\gamma_u(\beta)$ with $P^*$}} \gls{Ubarubeta} as the last intersection of \gls{Gammau} and \gls{Pstar}, in Corollary \ref{Cor:bound_hatu} we extend the estimate for $\partial_\beta^2\Gamma_u$ to the whole interval $[0,\overline U_u(\beta)]$. This concludes the part of Theorem \ref{manifoldbound_intro} concerning the second derivatives away from $\beta^{**}$. \\
We remark that in order to bound the second derivative of $\Gamma_u$ with respect to $\beta$, we must make the further assumption that $\beta$ is uniformly far from the critical speed $\beta^{**}$ Proposition \ref{Prop:blowupsecond} and Proposition \ref{Prop:Lipechit_U_u} show that these regularity estimates are sharp at the critical speed $\beta$.
%

\begin{remark}
\label{Rem:notation}
A glossary of notations is at end of the paper: we have omitted standard notations for natural, real numbers, etc. Since the functions considered may depend on several variables, in order to avoid formulas too cumbersome we omit the dependence from the variable which do not play a role in the computation: this should not create confusion.
\end{remark}

\section{Existence and structure of optimal profiles}
\label{S:struct_opt}

Aim of this section is to recall some basic facts about traveling profiles for parabolic reaction-diffusion equations. For all classical results regarding traveling wave solutions for parabolic reaction-diffusion equations we refer to \cite[Chapter 4]{Fife}. 

Two critical speeds play a distinguished role. The first is the control-free speed $\beta^*$, corresponding to the heteroclinic connection between $(0,0)$ and $(1,0)$. The second is the critical speed $\beta^{**}$, at which the unstable manifold of $(0,0)$ connects to the strongly stable manifold of $(U^*,0)$. For $\beta>\beta^{**}$, no optimal traveling profile exists, although the optimal effort can be approached arbitrarily well by traveling profiles.

\subsection{Traveling waves with no control}
\label{Ss:travelin_no_alpha}

Consider the PDE \eqref{Equa:parab_contr_1} without control
\begin{equation}
\label{eq}
U_t = U_{xx} + f(U).
\end{equation}
As in \cite{BressanChiri22} and \cite{BressanChiri23} we make the following assumptions on the \emph{source function}
$\gls{fU}$:
\begin{enumerate}[label=\textbf{(H\arabic*)}, ref=H\arabic*]
\item\label{H1} The function $f \in C^3([0,1])$ satisfies (see Fig. \ref{Fig:fU_Gammasu})
\begin{equation*}
f(0)=f(1)=0, \ f'(0)<0, \ f'(1)<0,
\end{equation*}
and there exists a unique point
$\gls{Ustar} \in (0,1)$ such that $f(U^*)=0$ and $f'(U^*)>0$.
\end{enumerate}
It is also natural to require that $f$ is \emph{positively predominant}, that is
\begin{equation*}
\int_0^1 f(U) \, dU > 0.
\end{equation*}
However this conditions does not play any role in the analysis carried out in this paper.

A \emph{traveling wave solution with velocity
$\gls{beta}$} (or \emph{traveling profile}) for \eqref{eq} is a solution of the form
\begin{equation*}
U(t,x) = U(x-\beta t)
\end{equation*}
connecting $U(-\infty) = 0$ to $U(+\infty) = 1$. If we consider the  system in the phase plane
$(\gls{Uvaria},\gls{Pvaria})$
\begin{align}
\label{sys}
\begin{cases}
U' = P, \\
P' = - f(U) -\beta P,
\end{cases}
\end{align}
a traveling profile corresponds to a trajectory in the $(U,P)$-plane connecting the equilibria $(0,0)$ and $(1,0)$ with $P > 0$.

Linearizing the system at a point $(U,P)$ we obtain the Jacobian matrix
\begin{equation*}
D(U,P)=\begin{pmatrix}
0 & 1 \\
- f'(U) & - \beta
\end{pmatrix},
\end{equation*}
whose eigenvalues are \newglossaryentry{lambdapm}{name=\ensuremath{\lambda_{\pm}(U,\beta)},description={eigenvalues of the linearized system \eqref{sys}}}
\begin{align}
\label{Equa:eigenv_equi}
\gls{lambdapm} := - \frac{\beta}{2} \pm \sqrt{\frac{\beta^2}{4}-f'(U)}.
\end{align}
We can therefore distinguish the following cases with respect to the stability of the fixed points $(0,0)$, $(U^*,0)$, and $(1,0)$.

\begin{enumerate}
\item When $f'(U)<0$, the eigenvalues are real and have opposite signs for all $\beta \in \R$: in particular the equilibria $(0,0)$ and $(1,0)$ are saddles.

\item If $f'(U)>0$ and $\beta <- 2 \sqrt{f'(U)}<0 $, the eigenvalues are both real and positive: the equilibrium $(U^*,0)$ is an unstable node.

\item If $f'(U)>0$ and $- 2\sqrt{f'(U)}<\beta < 0$, $\lambda_{\pm}(U,\beta)$ are complex conjugate, both with positive real part: the equilibrium $(U^*,0)$ is an unstable focus.

\item For $\beta = 0$ the system is Hamiltonian, so that $(U^*,0)$ is a center.

\item \label{Point_5:eigen_equi} If $f'(U)>0$ and $0 < \beta < 2\sqrt{f'(U)}$, $\lambda_{\pm}(U,\beta)$ are complex conjugate, both with negative real part: the equilibrium $(U^*,0)$ is a stable focus.

\item If $f'(U)>0$ and $\beta > 2\sqrt{f'(U)}>0 $, the eigenvalues are both real and negative: the equilibrium $(U^*,0)$ is a stable node.
\end{enumerate}

Therefore, for $P>0$ the phase portrait for the system \eqref{sys} contains an unstable manifold $\gls{Gammau} = \Gamma_u(U,\beta,f)$ of the point $(0,0)$ and a stable manifold $\gls{Gammas} = \Gamma(U,\beta,f)$ of the point $(1,0)$, which are characterized by
\begin{equation}
\label{Equa:Gammau_eq0}
\frac{d \Gamma_u}{dU} = - \frac{f(U)}{\Gamma_u} - \beta, \quad \lim_{U \to 0} \frac{\Gamma_u(U)}{U} = \lambda_+(0,\beta),
\end{equation}
\begin{equation}
\label{Equa:Gammas_eq0}
\frac{d \Gamma_s}{dU} = - \frac{f(U)}{\Gamma_s(U)} - \beta, \quad \lim_{U \to 1} \frac{\Gamma_s(U)}{1-U} = - \lambda_-(1,\beta).
\end{equation}
The limits as $U \to 0$, $U \to 1$ are required by the property of being tangent to the unstable, stable eigenspace of the equilibrium $U=0$, $U=1$ respectively.

Moreover (\cite[Chapter 4.4.a]{Fife}) there exists a unique value \newglossaryentry{betastar}{name=\ensuremath{\beta^*},description={speed for traveling profiles without control $\alpha$}} $\gls{betastar} = \beta^*(f)$ (see Remark \ref{Rem:ether_1}) such that $\Gamma_u(\beta^*)=\Gamma_s(\beta^*)$: thus, we can find a unique (up to translation) traveling profile $U$ with speed $\beta^*$. 

We write $\Gamma_u(\beta)$ as a function $\Gamma_u(U,\beta)$ in the region \newglossaryentry{Uufbeta}{name=\ensuremath{U_{uf}(\beta)},description={first point where $\Gamma_u(U,\beta) = 0$}} $[0,\gls{Uufbeta} = U_{uf}(\beta,f)]$ where $\Gamma_u(\beta) \geq 0$, with $\Gamma_u(0,\beta) = \Gamma_u(U_{uf}(\beta),\beta)) = 0$ and strictly positive inside the interval; similarly we write $\Gamma_s(\beta) = \Gamma_s(U,\beta)$, which it is easy to see that it is a function for $\beta \geq \beta^*$.

Point \eqref{Point_5:eigen_equi} above and the fact that for $\beta \to \infty$ we have $\Gamma_u(U,\beta) \to 0$ yields the existence of a value $\gls{betastarstar} = \beta^{**}(f) \geq 2\sqrt{f'(U^*)}$ such that for $U_{uf}(\beta^{**}) = U^*$. From this value on $U_{uf}(\beta) = U^*$.

\begin{remark}
\label{Rem:ether_1}
The value of $\beta^{**}$ corresponds to the "heteroclinic connection" where $\Gamma_u$ coincides with the "strongly" stable manifold defined by the eigenvalue $\lambda_-(U^*,\beta)$.
\end{remark}

See Fig. \ref{Fig:fU_Gammasu} for a graphical representation of the manifolds $\Gamma_u(\beta),\Gamma_s(\beta)$ for different values of $\beta$. 

\begin{figure}
\resizebox{\textwidth}{!}{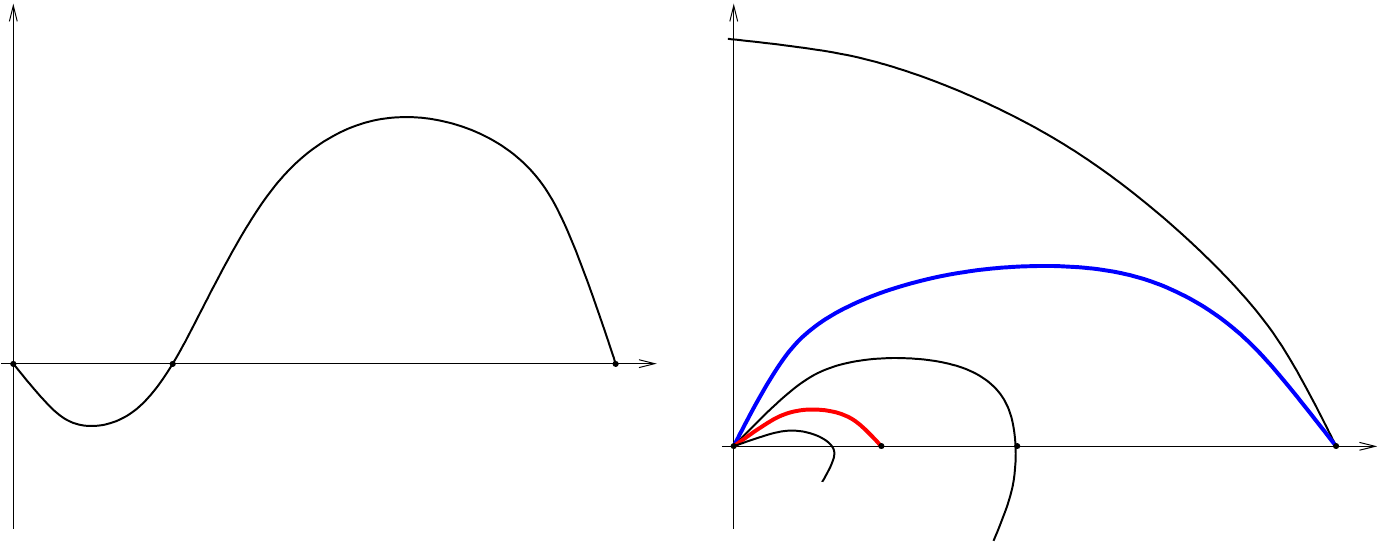}
\caption{The plot of the source $f(U)$ and of the stable and unstable manifolds for different speeds $\beta$, in the case the control $\alpha = 0$. The two critical speed $\beta^*$ (minimal speed to have a traveling profile, blue) and $\beta^{**}$ (heteroclinic connection, red) are highlighted.}
\label{Fig:fU_Gammasu}
\end{figure}

\subsection{Optimal traveling profiles for $\beta > \beta^*$}
\label{Ss:optim_Pbeta}

For $\beta > \beta^*$ we need to introduce a control $\alpha$ in order to find a traveling solution to the parabolic PDE
\begin{equation}
\label{Equa:trav_pro_a>0}
U_t = \Delta U + f(U) - \tilde \alpha U, \quad U(t,x) = U(x - \beta t).
\end{equation}
Rewritten in the phase plane, the above ODE becomes
\begin{equation*}
\begin{cases}
U_x = P, \\
P_x = - f(U) - \beta P + \tilde \alpha(x) U.
\end{cases}
\end{equation*}
If $x \mapsto U(x)$ is monotone increasing, then we can rewrite the above system as
\begin{equation*}
\frac{dP}{dU} = - \frac{f(U)}{P} - \beta + \alpha(U) U, \quad \alpha(U(x)) = \frac{\tilde \alpha(x)}{P}.
\end{equation*}

In \cite{bianchini2026travelingprofilescontrolcost} it is shown that in the phase plane $(U,P)$, for any fixed $\beta \geq \beta^*$ there is a unique optimal profile \newglossaryentry{Pbeta}{name=\ensuremath{P_\beta},description={the unique optimal profile for a given velocity $\beta$}} $\gls{Pbeta} = P_\beta(U)$ have the following structure:
\begin{enumerate}
\item an arc of $P =  \gls{Gammau} = \Gamma_u(U,\beta)$ for $0 \leq U \leq U_u(\beta)$, with $U_u(\beta) \geq U^*$;
\item a trajectory solving
\begin{equation}
\label{Equa:ODE_contr_PU}
\frac{dP_\beta}{dU} = - \frac{f(U)}{P_\beta} - \beta + \alpha(U) U,
\end{equation}
for some control $\alpha(U,\beta) \geq 0$, and $U^* \leq U_u(\beta) < U < U_s(\beta) < 1$ with
$$
P_\beta(U_u(\beta)) = \sqrt{U_u(\beta) f(U_u(\beta))}, \quad P_\beta(U_s(\beta)) = \sqrt{U_s(\beta) f(U_s(\beta))};
$$
\item an arc of $P = \Gamma_s(U)$ for $U_s \leq U \leq 1$.
\end{enumerate}
We observe that that when we write \eqref{Equa:ODE_contr_PU} we assume that optimal trajectories can be represented as functions $P = P(U)$, which is not a priori obvious considering the system \eqref{sys}. This is proved in \cite{bianchini2026travelingprofilescontrolcost}.

In Fig. \ref{Fig:KOIdef} a typical situation is depicted: defining the critical curve \newglossaryentry{Pstar}{name=\ensuremath{P^*(U)},description={curve of minimality}}
\begin{equation*}
\gls{Pstar} = \sqrt{U f(U)}, \quad U \in [U^*,1],
\end{equation*}
the control $\alpha$ is positive only in the (three, in this case) connected components of the interior \gls{OcalPopt} of
$$
\gls{KcalPopt} = \big\{ U : P_\beta(U) = P^*(U) \big\}, \quad \gls{OcalPopt} = \inter \, \gls{KcalPopt},
$$
while in the open region \newglossaryentry{IcalPopt}{name=\ensuremath{\mathcal I(\beta)},description={free set, open set where $\alpha = 0$ for  the optimal profile $P_\beta$}}
$$
\gls{IcalPopt} = \big\{ U : P_\beta \not= P^*(U) \big\}
$$
the trajectories are solution to \eqref{sys}, i.e. $\alpha = 0$: rewritten for $P = P(U)$ we have
\begin{equation}
\label{Equa:ODE_nocontr_PU}
\frac{dP_\beta}{dU} = - \frac{f(U)}{P_\beta} - \beta.
\end{equation}
 The critical curve $P^*(U)$ can be loosely speaking thought as the curve with minimal cost, which we would like to stay on as much as possible.

\begin{figure}
\resizebox{.5\textwidth}{!}{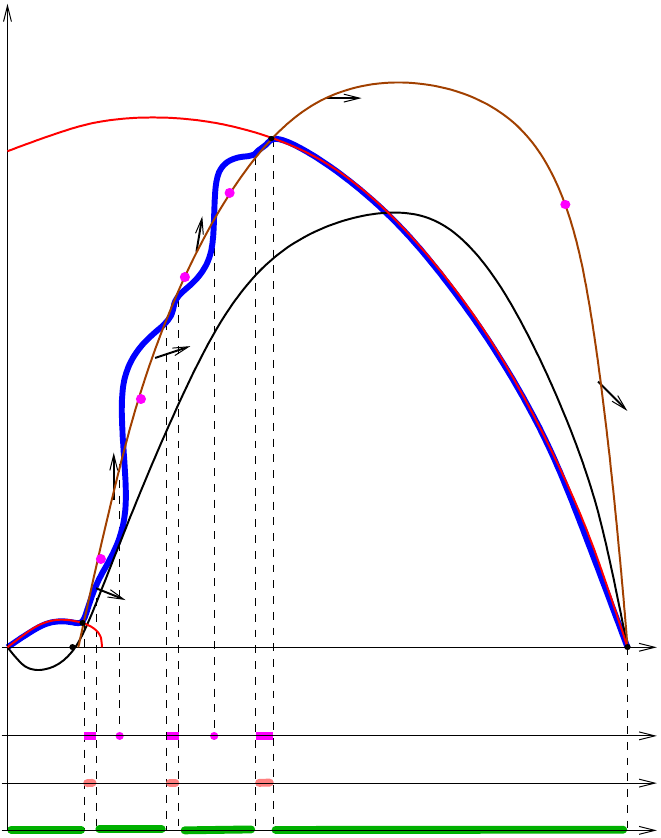}
\caption{A generic optimal profile $P_\beta(U)$ (blue). In the picture both the nonlinear source $f(U)$ (black) and the critical curve $P^*(U)$ (brown) are depicted. The profile $P_\beta$ cannot remain on the critical curve $P^*$ in the regions where $F(U) + \beta < 0$, i.e. the flux is exiting the region $0 \leq P \leq P^*(U)$: the arrow and the magenta dots denote the regions where $F(U) + \beta$ is positive or negative. The sets $\mathcal K(P,\beta)$, $\mathcal O(P,\beta)$, $\mathcal I(P,\beta)$ are the values of $U$ where $P_\beta = P^*$ ($\mathcal K$), its interior ($\mathcal O$) and the free region $\mathcal I$ where no control is acting, see \cite{bianchini2026travelingprofilescontrolcost}.}
\label{Fig:KOIdef}
\end{figure}

The \emph{Effort Function $\gls{Ebeta} = E(\beta,f)$} is defined as the $L^1$-norm of the control $\alpha$:
\begin{equation*}
E(\beta) = \int_0^1 \alpha(U) dU.
\end{equation*}
It is fairly easy to see that the following expression of $E$ is equivalent:
\begin{equation*}
E(\beta) = \int_\R \tilde \alpha(x) dx, \quad \text{because} \quad \alpha(U) dU = \alpha(U) P(U) dU = \tilde \alpha(x) dx.
\end{equation*}
Using the ODE \eqref{Equa:ODE_contr_PU} and the definition of the set \gls{OcalPopt}, we can write
\begin{equation*}
\begin{split}
E(\beta) &= \int_0^1 \bigg[ \frac{\frac{dP_\beta}{dU} + \frac{f(U)}{P_\beta} + \beta}{U} \bigg]^+ dU \\
&= \int_{\gls{OcalPopt}} \frac{\frac{dP_\beta}{dU} + \frac{f(U)}{P_\beta} + \beta}{U} dU \\
&= \int_{\gls{OcalPopt}} \frac{\frac{dP^*}{dU} + \frac{f(U)}{P^*} + \beta}{U} dU = \int_{\gls{OcalPopt}} \frac{F(U) + \beta}{U} dU,
\end{split}
\end{equation*}
where we have defined the function 
\begin{equation}
\label{F}
\gls{Fdef} = \frac{dP^*}{dU} + \frac{f(U)}{P^*(U)} = \frac{3 f(U) + U f'(U)}{2 \sqrt{U f(U)}}.
\end{equation}
The fact that boundary points of \gls{KcalPopt} have $F(U) + \beta = 0$ for $\mathscr L^1$-a.e. $U \in [0,1]$ is proved in \cite{bianchini2026travelingprofilescontrolcost}: it is based on the observation that up to a countable set these points are continuity point of $F(U)$.

Denote the connected components \newglossaryentry{Oomega}{name=\ensuremath{O_\varpi},description={connected components of $\mathcal O(P,\beta)$}} $\gls{Oomega} = (U^-_\varpi,U^+_\varpi)$, $\varpi \in \Q$, as the connected components of \gls{OcalPopt}, ordered so that
\begin{equation*}
\varpi < \varpi' \quad \Longrightarrow \quad U^+_\varpi < U^-_{\varpi'}.
\end{equation*}
Then we obtain
\begin{equation*}
\gls{Ebeta} = \sum_\varpi \int_{U^-_\varpi(\beta)}^{U^+_\varpi(\beta)} \frac{F(U) + \beta}{U} dU.
\end{equation*}
It is thus natural to see that in order to study the structure of $P_\beta$ and $E(\beta)$ and their dependence on the quantities $\beta,f$, one has to study how \gls{Oomega} change as $\beta,f$ vary: more precisely,
\begin{itemize}
\item \label{Point1:why} how the intersection sets
\begin{equation}
\label{Equa:inter_Gammaus}
\big\{ U : \Gamma_u(U,\beta) = P^*(U) \big\}, \quad \big\{ U : \Gamma_s(U,\beta) = P^*(U) \big\}
\end{equation}
behave as the parameter $\beta$ (and also the source $f$) of the PDE \eqref{Equa:ODE_contr_PU} varies, i.e. their transverse intersection with the critical curve \gls{Pstar};

\item \label{Point2:why} the transversality properties of the vector field of the ODE \eqref{Equa:ODE_contr_PU} with zero control $\alpha = 0$, namely
\begin{equation*}
(U,P) \mapsto \bigg( 1, - \frac{f(U)}{P_\beta} - \beta \bigg)
\end{equation*}
w.r.t. the critical curve $P^*(U) = \sqrt{U f(U)}$;

\item \label{Point3:why} the behavior of trajectories near the critical value \gls{betastarstar}, corresponding to the heteroclinic connection of $(0,0)$ to the strongly stable manifold of $(U^*,0)$.
\end{itemize}
These properties are fundamental for the analysis of the differentiability (in the Fr\'echet sense) of the optimal profile \gls{Pbeta} and its effort function \gls{Ebeta}: indeed the first and last points of $\partial \mathcal O(\beta)$ belong to the sets \eqref{Equa:inter_Gammaus}, justifying the interests \eqref{Point1:why}, and Point \eqref{Point2:why} allows to prove that the intervals of \gls{OcalPopt} are finite and depends smoothly on $\beta,f$. This situation is studied in Section \ref{S:transver_flux}.

About Point \eqref{Point3:why}, observe that the optimal profile \gls{Pbeta} does not correspond to any traveling wave $U(x-\beta t)$ for \eqref{Equa:trav_pro_a>0}, since
\begin{equation*}
\int_{\frac{U^*}{2}}^{U^*} \frac{1}{P_\beta(U)} dU = +\infty.
\end{equation*}
See Fig. \ref{Fig:optproof_2} for a picture of how the optimal profile behaves as $\beta$ varies. However it can be approximated with traveling profiles, so that the effort $E(\beta)$ is the infimum of the costs for traveling profiles. Due to this change of structure, it is necessary to study if the intersection of $\Gamma_u(\beta)$ and $P^*(\beta)$ behaves smoothly w.r.t. $\beta,f$. This is studied in Section \ref{Ss:stab_sti_Gamma_us}.

\begin{figure}
\centering
\resizebox{.75\textwidth}{!}{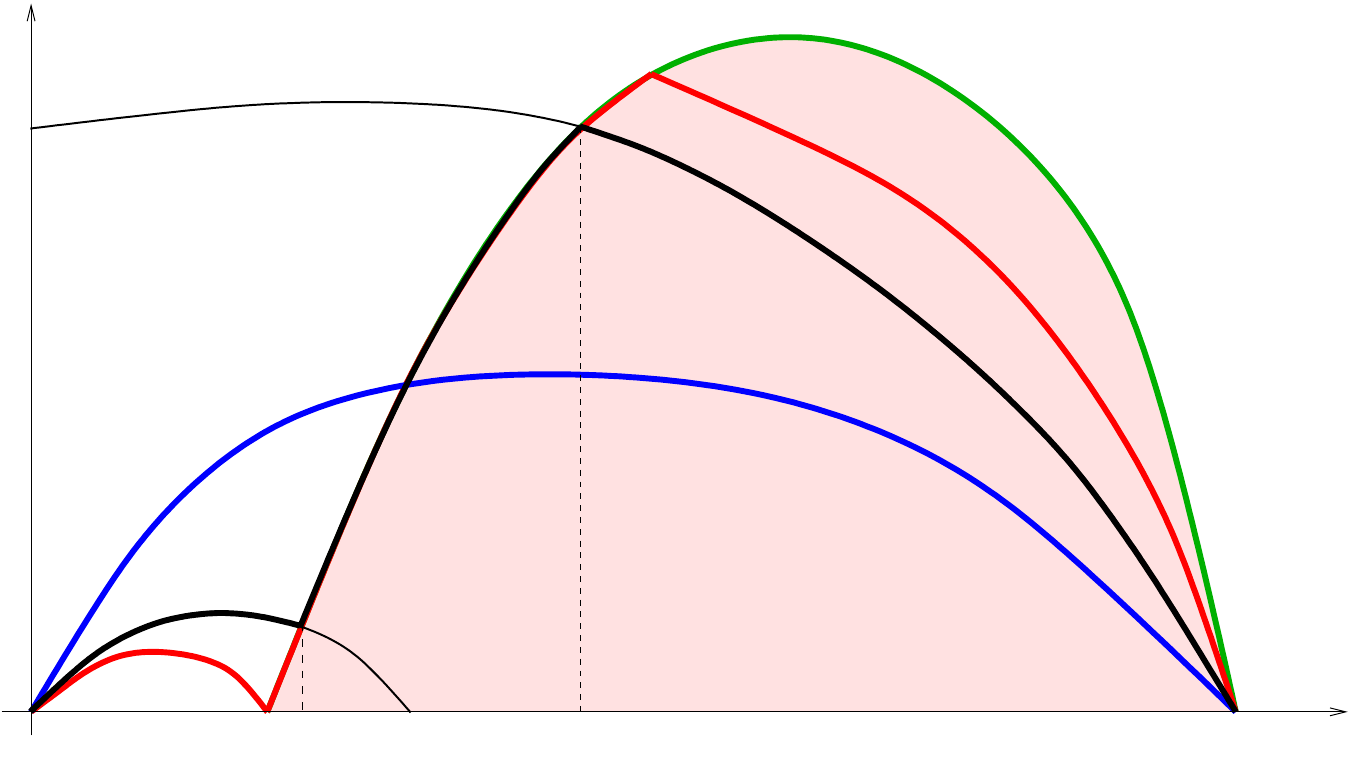} 
\caption{The structure of the optimal solution as a function of $\beta$, under the simplifying assumption that $F(U)$ is monotone (see \cite{BressanChiri22,bianchini2026travelingprofilescontrolcost} for more details about this condition): starting from the natural speed $\beta^*$ (blue), for $\beta^* \leq \beta < \beta^{**}$ the optimal profile $P_\beta(U)$ corresponds to a traveling wave of \eqref{Equa:trav_pro_a>0}. For $\beta \geq \beta^{**}$ (red), the optimal profile minimizing $E(\beta)$ passes through $(U^*,0)$, and then it is not the graph of an actual solution to \eqref{Equa:trav_pro_a>0}.}
\label{Fig:optproof_2}
\end{figure}

The function which determines when $P_\beta$ remains on $P^*(U)$ is \gls{Fdef}. If $F(U) + \beta \geq 0$, then the flow of \eqref{sys} is entering the domain $0 \leq P \leq P^*(U)$, and then by applying the control $\alpha \geq 0$ we can force the trajectory to remain on $P^*$. Viceversa, if $F(U) + \beta < 0$, then there is no positive control allowing to stay on $P^*(U)$, since the flow of \eqref{sys} is exiting. The generic structure of $F(U)$ is then of fundamental importance in the analysis of the optimal profile $P_\beta(U)$.

The next sections are devoted to the analysis of $F(U)$ and $\Gamma_u(\beta,f),\Gamma_s(\beta,f)$, and their intersections with the critical curve $P^*(U)$.

\section{Genericity of transversality properties}
\label{S:transver_flux}

Our aim is to show that there exists a dense family of polynomial sources $f$ satisfying Assumptions \eqref{H1} and enjoying the following additional transversality properties. 

\begin{enumerate}

\item \label{Point1:transv} The function \eqref{F}
\begin{equation*}
(U^*,1) \ni U \mapsto F(U) = \frac{3f(U) + U f'(U)}{2\sqrt{Uf(U)}}
\end{equation*}
has finitely many critical points and the second derivative does not vanish at these points (i.e. they are local maxima/minima).

\item \label{Point2:transv} For all $\beta$ the function \newglossaryentry{Fudef}{name=\ensuremath{F_u(U,\beta)},description={difference $\Gamma_u(U,\beta) - P^*(U)$}}
\begin{equation*}
[U^*,U_u(\beta)] \ni U \mapsto \gls{Fudef} = \Gamma_u(U,\beta) - P^*(U)
\end{equation*}
has finitely many zeros, and either the first or the second derivative does not vanish at these points (i.e. they are local maxima/minima).

\item \label{Point3:transv} Similarly, for all $\beta$ the function \newglossaryentry{Fsdef}{name=\ensuremath{F_s(U,\beta)},description={difference $\Gamma_s(U,\beta) - P^*(U)$}}
\begin{equation*}
[U^*,1] \ni U \mapsto \gls{Fsdef} = \Gamma_s(U,\beta) - P^*(U)
\end{equation*}
has finitely many zeros, and either the first or the second derivative does not vanish at these points (i.e. they are local maxima/minima).

\end{enumerate}

We will also show that the above transversality conditions are generic, in the sense that the set of functions satisfying them is open and dense. Moreover when restricted to polynomials of fixed degree, this transversality has full Lebesgue measure.


\subsection{Transversality properties of $F$}
\label{Ss:transv_F}

We start with the function $F$, whose analysis is easier since it does not depend on $\beta$.

A straightforward application of Sard's Lemma \cite{sard_origi} would let us conclude that the set of speed $\beta$ not satisfying the transversality condition
\begin{equation*}
F'(U) \not= 0 \quad \text{when} \quad F(U) + \beta = 0
\end{equation*}
is compact with zero Lebesgue measure for every smooth source $f$, because of
\begin{equation*}
F(U) \sim \begin{cases}
\frac{\sqrt{f'(U^*)}}{(U - U^*)^{\frac{1}{2}}} & U \searrow U^*, \\
- \frac{\sqrt{-f'(1)}}{(1 - U)^{\frac{1}{2}}} & U \nearrow 1.
\end{cases}
\end{equation*}
In particular
\begin{equation}
\label{Equa:limi_F}
\lim_{U \to U^*} F(U) = + \infty, \quad \lim_{U \to 1} F(U) = - \infty.
\end{equation}

More precisely 

\begin{proposition}
\label{zeros}
Let $F$ be defined by \eqref{F}. There exists a compact set \newglossaryentry{Ncal0}{name=\ensuremath{\mathcal N_0},description={set of critical values for $F$}} $\gls{Ncal0} \subseteq \R$ of Lebesgue measure zero such that for all $\beta \in \R\setminus \mathcal{N}_0$ there is \newglossaryentry{Nbeta}{name=\ensuremath{N(\beta)},description={number of roots of $F(U) + \beta = 0$}} $N = \gls{Nbeta}$ and there are finitely many $U_0=U^*<U_1<...<U_{2N-1}<U_{2N}=1$ satisfying \newglossaryentry{D+i}{name=\ensuremath{D^+_i},description={intervals where the flow is entering the region $\{P \leq P^*\}$}}\newglossaryentry{D-i}{name=\ensuremath{D^-_i},description={intervals where the flow is exiting the region $\{P \leq P^*\}$}}
\begin{equation*}
F(U)+\beta \begin{cases} =0 & \text{ for }  U=U_i, \ i=1,...,2N-1, \\
>0  & \text{ for } U \in \gls{D+i} := (U_{2i},U_{2i+1}), i=0,...,N-1, \\
<0  & \text{ for } U \in \gls{D-i} := (U_{2i+1},U_{2i+2}), i=0,...,N-1,
\end{cases}
\end{equation*}
and
\begin{equation*}
(-1)^i F'(U_i) > 0 \quad \forall i=1,..., 2N-1.
\end{equation*}
\end{proposition}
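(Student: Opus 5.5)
We take $\mathcal N_0 = \{-F(U) : U \in (U^*,1),\ F'(U)=0\}$, i.e. minus the set of critical values of $F$ on the open interval, and argue in four steps: measure zero via Sard, compactness via the boundary asymptotics, finiteness of the roots, and the sign and alternation pattern.

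\emph{Measure zero.} Since $f \in C^3$ and $Uf(U)>0$ on $(U^*,1)$, the function $F$ is $C^2$ there. Sard's Lemma for maps $\R \to \R$ needs only $C^1$, so the critical values of $F$ have Lebesgue measure zero. Hence $\mathcal N_0$ has measure zero.

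\emph{Compactness.} By \eqref{Equa:limi_F} there are $\delta>0$ and $M$ such that $F>M$ on $(U^*,U^*+\delta]$ and $F<-M$ on $[1-\delta,1)$. Outside these two neighbourhoods we need the critical points to stay away from the ends. To see this, differentiate the asymptotic expansion. Near $U^*$ write $g(U)=Uf(U)$, which satisfies $g(U^*)=0$ and $g'(U^*)=U^*f'(U^*)>0$. Then
\[
F = \frac{g'(U)+2f(U)}{2\sqrt{g(U)}},
\]
so
\[
F' = \frac{(g''+2f')\,g - \tfrac12 (g'+2f)\,g'}{2g^{3/2}} \sim -\frac{g'(U^*)^2}{4\,g^{3/2}} \to -\infty .
\]
Symmetrically, near $1$ we have $g(1)=0$ and $g'(1)=f'(1)<0$, and the same computation gives $F'\to-\infty$. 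Shrinking $\delta$ if necessary, $F'<0$ on both end intervals. Therefore the critical points of $F$ lie in the compact set $[U^*+\delta,1-\delta]$. The set of critical points is closed there, so its image under the continuous map $-F$ is compact.

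\emph{Finiteness of the roots.} Fix $\beta\notin\mathcal N_0$. Every root of $F+\beta$ then lies in $(U^*,1)$ and satisfies $F'(U)\neq 0$. By \eqref{Equa:limi_F} the roots lie in a compact subinterval. Each root is isolated by the inverse function theorem, and a compact set of isolated points is finite.

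\emph{Sign pattern and alternation.} Order the roots $U_1<\dots<U_m$. Each crossing is simple, so $F+\beta$ changes sign at every $U_i$. It is positive near $U^*$ and negative near $1$, which forces $m$ to be odd; write $m=2N-1$. The sign on $(U_{i},U_{i+1})$ is then $(-1)^{i}$ times positive, giving the stated pattern. At $U_i$ the function passes from the sign $(-1)^{i-1}$ to $(-1)^i$, so the derivative there has sign $(-1)^i$; that is, $(-1)^iF'(U_i)>0$.

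The main obstacle is the compactness step, namely excluding critical points of $F$ that accumulate at $U^*$ or at $1$. The derivative asymptotics above handle this. They rely on $g'(U^*)\neq 0$ and $g'(1)\neq0$, which follow from $f'(U^*)>0$ and $f'(1)<0$ in \eqref{H1}.
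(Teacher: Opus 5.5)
Your proof is correct and follows essentially the paper's route: $\mathcal N_0$ is minus the set of critical values of $F$, Sard's Lemma gives null measure, $F'\to-\infty$ at $U^*$ and at $1$ gives compactness, and finiteness of the roots comes from $F'\neq 0$ at each root. The paper phrases that last step as a contradiction via an accumulation point, which is equivalent to your isolated-points argument. You also make explicit the endpoint asymptotics of $F'$ (via $g=Uf$) and the parity/sign-alternation argument, which the paper only asserts by appeal to \eqref{Equa:limi_F}.
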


\begin{proof}
%
First note that, by (\ref{H1}), $F \in C^1((U^*, 1))$ and there holds
\begin{equation*}
F'(U)=\frac{2U f(U) f'(U) + U^2 f(U) f(U)''-\frac{3}{2}f^2(U)-\frac{1}{2}U^2(f'(U))^2}{2(Uf(U))^\frac{3}{2}},
\end{equation*}
which implies in accordance with \eqref{Equa:limi_F}
\begin{equation*}
\lim_{U \to U^*, 1} F'(U)=-\infty.
\end{equation*}
In particular, we can find $I\subseteq(U^*,1)$ compact such that $F'(U)<0 \ \forall U \notin I$.

A direct application of Sard's Lemma \cite{sard_origi} then implies that the compact set
$$
\gls{Ncal0} := - F(\{U\in (U^*,1): \ F'(U)=0\})
$$
is Lebesgue negligible, i.e.
\begin{align*}
\mathcal L^1(\mathcal N_0) &= \mathcal{L}^1 \big( F \big( \big\{ U \in (U^*,1): F'(U) = 0 \big\} \big) \big) = 0 .
\end{align*}

Fix $\beta\in \R\setminus \mathcal{N}_0$. From the coercivity conditions \eqref{Equa:limi_F}
we can infer that the level set \newglossaryentry{Lbeta}{name=\ensuremath{L_\beta},description={level set $\{F(U) + \beta = 0\}$}}
$$
\gls{Lbeta}:= \big\{ U\in(U^*,1): \ F(U)+\beta=0 \big\}
$$
is compact.
Assume by contradiction that there is a sequence $\{U_n\}_{n\in\mathbf{N}}\subseteq L_\beta$ such that $U_n\neq U_k$ for $n\neq k$. By the previous observation we have that, up to subsequences, $U_n\to U \in L_\beta$ and $U_n \not= U$. The derivative of $F$ at $U$ can be computed as
\begin{equation*}
F'(U)=\lim_{n\to+\infty} \frac{F(U_n)-F(U)}{U_n-U}=0,
\end{equation*}
contradicting the assumption $\beta \notin \mathcal{N}_0$. 

We conclude that the cardinality of $L_\beta$ is finite and, once again by \eqref{Equa:limi_F}, we can find $U_0 = U^* < U_1 < ... < U_{2N-1} < U_{2N} = 1$ as in the statement.
\end{proof}

\begin{remark}
\label{zerosp}
Clearly, if $K \subseteq\R\setminus \mathcal{N}_0$ is compact, we can find $N \ge 1$ such that $N(\beta)\le N \ \forall \beta \in K$. Indeed, first of all, by \eqref{Equa:limi_F}, we find $I \Subset (U^*,1)$ such that $-F(U) \notin K \ \forall U \notin I$, which implies
\begin{equation*}
\big\{ U\in(U^*,1): F(U)+\beta=0 \big\} = \big\{ U\in I: F(U)+\beta=0 \big\} \ \forall \beta \in K.
\end{equation*}
By contradiction, assume that for all $n \in \N$ there exists $\beta_n \in K$ such that
$$
\sharp \{U\in I: F(U)+\beta_n=0\} \geq n.
$$
By compactness, up to subsequences, we may assume $\beta_n \to \beta \in K$. 
By induction, construct a sequence $\{U_n\}_{n\in\N}\subseteq I$ such that
$$
F(U_n)+\beta_n=0 \ \forall n\in \N \quad \text{and} \quad U_n \neq U_k \ \forall n\neq k.
$$
With the same argument as the one employed in the proof of Lemma \ref{zeros}, we deduce that
$$
U_n \to U \in I \quad \text{such that} \quad F(U)+\beta=0 \ \text{and} \ F'(U)=0,
$$
which contradicts the hypothesis that $-\beta$ is not singular. The same proof shows that $N(\beta)$ is constant in any connected component of $\R \setminus \mathcal N_0$.
\end{remark}

In the case of polynomial $f(U)$, we can further improve the above proposition by showing that for all $\beta$ the number of critical points is finite and they are local minimizers or maximizers outside a codimension $1$ set in the space of polynomial with fixed degree $k$.

\begin{proposition}
\label{Prop:F_transv_poly}
Assume that
\begin{equation}
\label{Equa:ply_formu}
f(U) = \sum_{i=1}^k a_i U^i
\end{equation}
is a polynomial function satisfying condition \eqref{H1} (hence in particular $a_0 = 0$) and define, as in \eqref{F},
\begin{equation*}
F(U) = \frac{3f(U)+Uf'(U)}{2\sqrt{Uf(U)}}, \quad U \in (U^*,1).
\end{equation*}
Then the following holds.
\begin{enumerate}
\item 
The set
\begin{equation*}
\big\{ U \in (U^*,1): \ F'(U)=0 \big\}  
\end{equation*}
is finite.

\item \label{Point2:trans_poly} There exists a closed set \gls{NTFk} of Hausdorff dimension at most $k-1$ in the space of parameters $(a_1,\dots,a_k) \in \R^k$ defining polynomials with degree $k$ through \eqref{Equa:ply_formu} and satisfying Assumption \eqref{H1}, such that outside this set the second derivative $F''$ is not $0$ in the critical points $\{F' = 0\}$, i.e. they are maxima or minima.

\item \label{Point3:trans_poly} For all fixed $U^* \in (0,1)$, the set of non-transversality
\begin{equation*}
\begin{split}
&\bigg\{ (U,a_1,\dots,a_k), f(U) = \sum_{i=1}^k a_i U^i \ \text{satisfies (\ref{H1}), $f(U^*) = 0$ and} \ F'(U) = F''(U) = 0 \bigg\} \\
& \qquad \qquad \subset \bigg\{ (U,a_1,\dots,a_k), f(U) = \sum_{i=1}^k a_i U^i \ \text{satisfies (\ref{H1}), $f(U^*) = 0$} \bigg\} 
\end{split}
\end{equation*}
has Hausdorff dimensions at most $k-1$.
\end{enumerate}
\end{proposition}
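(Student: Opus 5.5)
The idea is to reduce everything to polynomial identities in $U$. Write $g(U) = Uf(U)$, a polynomial of degree $k+1$ that is positive on $(U^*,1)$. Then $P^* = \sqrt{g}$ and $F = (P^*)' + f/P^* = \frac{g' + 2f}{2\sqrt g}$. Differentiating gives
\begin{equation*}
F'(U) = \frac{Q(U)}{4 g(U)^{3/2}}, \qquad Q := 2g(g''+2f') - (g'+2f)g',
\end{equation*}
where $Q$ is a polynomial in $U$ whose coefficients are quadratic forms in $(a_1,\dots,a_k)$. On $(U^*,1)$ the zeros of $F'$ are exactly the zeros of $Q$.

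\textbf{Point (1).} It suffices to check that $Q \not\equiv 0$. Near $U^*$ we have $g \to 0^+$ with $g'(U^*) = U^* f'(U^*) > 0$, so $Q(U) \to -(g'(U^*))^2 < 0$. This is consistent with $\lim F' = -\infty$ from the proof of Proposition \ref{zeros}. Hence $Q$ is a nonzero polynomial of degree at most $2k$, and it has finitely many roots in $(U^*,1)$.

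\textbf{Point (3).} At a zero of $Q$ one has $F'' = Q'/(4g^{3/2})$, so the non-transversality set is
\begin{equation*}
\Sigma = \{(U,a) : U \in (U^*,1),\ a \in A_{U^*},\ Q(U,a) = Q_U(U,a) = 0\},
\end{equation*}
where $A_{U^*}$ is the set of admissible coefficient vectors with $f(U^*) = 0$, an open subset of a hyperplane in $\R^k$. The ambient set $(U^*,1) \times A_{U^*}$ therefore has dimension $k$. I would show that $\Sigma$ is a real algebraic (semialgebraic) subset of codimension at least $1$ in it, and conclude that its Hausdorff dimension is at most $k-1$. Since $\Sigma$ is semialgebraic, this reduces to showing that $\Sigma$ has empty interior in the ambient set, i.e. that the polynomial $Q$, or $Q_U$, does not vanish identically on $(U^*,1) \times A_{U^*}$. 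That already follows from the computation in Point (1): $Q(U,a) < 0$ for $U$ near $U^*$ and any admissible $a$. So $\{Q = 0\}$ is a proper algebraic subset of dimension at most $k-1$, and $\Sigma \subset \{Q=0\}$.

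\textbf{Point (2).} Set $\mathcal{NT}_k$ to be the projection of $\{(U,a): Q = Q_U = 0\}$ onto the $a$-variables, taken over all admissible $a$ (with $U^*$ free). Here a dimension count is needed. The incidence variety $\{(U,a) : U \in (U^*(a),1),\ Q = Q_U = 0\}$ lies in a $(k+1)$-dimensional space and is cut out by two equations. If these are independent, it has dimension at most $k-1$, and its projection has dimension at most $k-1$ because projection does not increase Hausdorff dimension for semialgebraic sets.

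\textbf{Main obstacle.} The hard step is independence, i.e. that $Q_U$ does not vanish identically on a $k$-dimensional component of $\{Q=0\}$. Equivalently, the discriminant of $Q$ in $U$ is not identically zero on the admissible open set. My first attempt would be a perturbation argument: replace $f$ by $f + \epsilon h$ with $h(0) = 0$, for instance $h = U^j$ or $h = U(U-U^*)$ to keep $U^*$ a zero. At a point where $Q = Q_U = 0$, I would show that $\partial_\epsilon Q$ can be made nonzero. Then the map $(U,a) \mapsto (Q, Q_U)$ has rank $2$ and the implicit function theorem gives codimension $2$ locally. The needed rank condition is that the linearization $\delta Q$ in the direction $h$ is not proportional along the degenerate root, which should follow because $\delta Q$ involves $h''$ linearly with coefficient $2g \cdot U \neq 0$, and $h''(U_0)$ can be prescribed freely. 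Closedness of $\mathcal{NT}_k$ is relative to the admissible set and comes from continuity of $Q$ together with the fact that, by Proposition \ref{zeros}, the critical points stay in a compact subinterval, so they cannot escape to $U^*$ or $1$.
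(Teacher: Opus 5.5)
Point (1) is correct and is the paper's argument. Your overall plan for (2), projecting the incidence set $\{Q=Q_U=0\}$ onto the coefficients, is also the paper's.

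Your dimension bookkeeping, however, drops the condition $f(1)=\sum_i a_i=0$ from (\ref{H1}); your test directions $h=U^j$ and $h=U(U-U^*)$ violate it. The admissible coefficients fill an open subset of a hyperplane, so $(U,a)$-space is $k$-dimensional. $A_{U^*}$ lies in the codimension-two subspace $\{f(1)=f(U^*)=0\}$, so $(U^*,1)\times A_{U^*}$ has dimension $k-1$, not $k$.

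Your argument for (3) survives this, and in fact gives $\dim\Sigma\le k-2$. It is a different and more economical route than the paper's, which intersects the codimension-two set from (2) with the affine constraint $f(U^*)=0$. It also cannot be improved uniformly in $U^*$: by Remark \ref{Rem:cubic_case}, for $k=3$ and $U^*=7/25$ every admissible cubic $\lambda U(U-U^*)(1-U)$ has $F'=F''=0$ at $U=2/5$.

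For (2), the corrected count makes the bound $k-1$ automatic. The real content, which Theorem \ref{Theo:transv_generic} uses to get full measure in $\mathcal D(k)$, is that $\mathcal{NT}_k$ is null in the $(k-1)$-dimensional admissible set. Equivalently, $\{Q=Q_U=0\}$ must have codimension two in $(U,a)$-space. This is exactly the step you leave at ``should follow'', and your sketch does not close it, for three reasons.

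(i) Showing $\partial_\epsilon Q\neq0$ only gives rank one. Where $Q_U=0$, the Jacobian of $(U,a)\mapsto(Q,Q_U)$ has rows $(0,\nabla_aQ)$ and $(Q_{UU},\nabla_aQ_U)$. So if $Q_{UU}=0$ you must also show that $\nabla_aQ_U$ is not parallel to $\nabla_aQ$. That needs the third-order term: $\delta(Q_U)$ contains $2gU\,h'''$, while $\delta Q$ contains no $h'''$.

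(ii) The jets of $h$ at $U_0$ are not free. An admissible $h$ has degree $\le k$ with $h(0)=h(1)=0$. For $k\le4$, no such $h$ has vanishing $2$-jet at $U_0$ and $h'''(U_0)\neq0$, since it would be divisible by $U(U-1)(U-U_0)^3$. Moreover $Q$ and $Q_U$ are quadratic forms in $a$, so by Euler's relation both differentials vanish on the admissible direction $h=f$. For $k=3$ this forces them to be proportional, so perturbing $a$ alone can never give rank two.

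(iii) The ``equivalent'' discriminant criterion fails as stated. Writing $f=Up$, one has $Q=2U^3\bigl(3pp'+Upp''-\tfrac12Up'^2\bigr)$, so $\mathrm{disc}_U Q\equiv0$.

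The missing idea is the paper's change of variables. The map $(U,a_2,\dots,a_k)\mapsto\tilde a=L(a_iU^i)$, with $a_1=-\sum_{i\ge2}a_i$, has Jacobian determinant $U^{k(k+1)/2-1}f'(1)\neq0$. Letting $U$ move therefore absorbs the constraint $f(1)=0$ and makes the jet of $f$ at $U$ a set of free local coordinates. In these coordinates the paper's $Q$ (half of yours) is $\tfrac12(6\tilde a_1\tilde a_2-\tilde a_2^2+4\tilde a_1\tilde a_3)$, with $\partial_{\tilde a_3}Q=2\tilde a_1=2f(U)>0$. One also has $2UQ_U=18\tilde a_1\tilde a_2+2\tilde a_2^2+28\tilde a_1\tilde a_3+12\tilde a_1\tilde a_4$. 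Hence for $k\ge4$ the second equation can be solved for $\tilde a_4$, and $\{Q=Q_U=0\}$ is a smooth codimension-two graph $\tilde a_4=\phi(\tilde a_1,\tilde a_2,\tilde a_3)$ over $\{Q=0\}$. For $k=3$ the two quadratic cones meet in the three lines \eqref{Equa:thrre_tildea}.
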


Note that since $f(U)$ has $3$ zeros, clearly for $k \leq 2$ the sets are empty. Moreover, the non tranversality set has zero Lebesgue measure. Point \eqref{Point3:trans_poly} will be useful when studying the optimal profile problem keeping the point $U^*$	fixed.


\begin{proof}
{\it Point (1).} Clearly by \eqref{Equa:limi_F} we have that $F'(U)$ cannot be identically $0$ 
: then, since
\begin{equation*}
F'(U)=\frac{2Uf(U) f'(U) + U^2 f(U) f''(U) - \frac{3}{2}f^2(U) - \frac{1}{2}U^2(f'(U))^2}{2(Uf(U))^\frac{3}{2}},
\end{equation*}
zeros of $F'$ are also zeros for the non-identically null polynomial
\begin{equation*}
U \mapsto Q(U) = 2U f(U) f'(U)+U^2 f(U) f''(U)-\frac{3}{2}f^2(U)-\frac{1}{2}U^2(f'(U))^2.
\end{equation*}
We deduce that they are finite.
%

\medskip

\noindent{\it Point (2).} Consider the function
\begin{equation*}
(U,a_1,\dots,a_k) \mapsto Q(U,a_1,\dots,a_k) = 2 U f(U) f'(U) + U^2 f(U) f''(U)-\frac{3}{2}f^2(U)-\frac{1}{2}U^2(f'(U))^2,
\end{equation*}
where by Assunmption \eqref{H1}
\begin{equation*}
f(U) = \sum_{i=1}^k a_i U^i, \quad f(U^*) = \sum_{i=1}^k a_i (U^*)^i = 0, \quad f(1) = \sum_{i=1}^k a_i = 0.
\end{equation*}
In this part the second condition is not used, since we allow $U^*$ to vary.

The function $Q$ can be written explicitly as the quadratic form
\begin{equation*}
\begin{split}
Q(U,a_1,\dots,a_k) &= \frac{1}{2} \sum_{ij} a_i a_j U^{i+j} \big( i^2 + j^2 + i + j - ij - 3 \big) \\
&= \frac{1}{2} (a_i U^i)^T [\gamma_{ij}] (a_i U^i), \quad \gamma_{ij} = i^2 + j^2 + i + j - ij - 3.
\end{split}
\end{equation*}
If we define the $k \times k$-matrices
\begin{equation*}
L_1 = \left[ \begin{array}{ccccc}
1 & 1 & 1 & \dots & 1 \\
0 & 1 & 1 & \dots & 1 \\
0 & 0 & 1 & \dots & 1 \\
\vdots & \vdots & \vdots & \ddots & \vdots \\
0 & 0 & \dots & 0 & 1
\end{array} \right], \quad L^{-1}_1 = \left[ \begin{array}{ccccc}
1 & - 1 & 0 & \dots & 0 \\
0 & 1 & -1 & \dots & 0 \\
0 & 0 & 1 & \dots & 0 \\
\vdots & \vdots & \vdots & \ddots & \vdots \\
0 & 0 & \dots & 0 & 1
\end{array} \right],
\end{equation*}
\begin{equation*}
L_2 = \left[ \begin{array}{cccccc}
1 & 0 & 0 & 0 & \dots & 0 \\
0 & 1 & 1 & 1 & \dots & 1 \\
0 & 0 & 1 & 1 & \dots & 1 \\
0 & 0 & 0 & 1 & \dots & 1 \\
\vdots & \vdots & \vdots & \vdots &\ddots & \vdots \\
0 & 0 & 0 & 0 & \dots & 1
\end{array} \right], \quad L^{-1}_2 = \left[ \begin{array}{cccccc}
1 & 0 & 0 & 0 & \dots & 0 \\
0 & 1 & -1 & 0 & \dots & 0 \\
0 & 0 & 1 & -1 & \dots & 0 \\
0 & 0 & 0 & 1 & \dots & 0 \\
\vdots & \vdots & \vdots  & \vdots & \ddots & \vdots \\
0 & 0 & 0 & 0 & \dots & 1
\end{array} \right],
\end{equation*}
\begin{equation*}
L_3 = \left[ \begin{array}{ccccccc}
1 & 0 & 0 & 0 & 0 & \dots & 0 \\
0 & 1 & 0 & 0 & 0 &\dots & 0 \\
0 & 0 & 1 & 1 & 1 & \dots & 1 \\
0 & 0 & 0 & 1 & 1 & \dots & 1 \\
0 & 0 & 0 & 0 & 1 & \dots & 1 \\
\vdots & \vdots & \vdots & \vdots & \vdots & \ddots & \vdots \\
0 & 0 & 0 & 0 & 0 & \dots & 1
\end{array} \right], \quad L^{-1}_3 = \left[ \begin{array}{ccccccc}
1 & 0 & 0 & 0 & 0 & \dots & 0 \\
0 & 1 & 0 & 0 & 0 & \dots & 0 \\
0 & 0 & 1 & -1 & 0 & \dots & 0 \\
0 & 0 & 0 & 1 & -1 &\dots & 0 \\
0 & 0 & 0 & 0 & 1 &\dots & 0 \\
\vdots & \vdots & \vdots & \vdots & \vdots & \ddots & \vdots \\
0 & 0 & 0 & 0 & 0 & \dots & 1
\end{array} \right],
\end{equation*}
and
\begin{equation}
\label{Equa:L_matrix}
L = L_3 L_2 L_1 = \left[
\begin{array}{cccccc}
1 & 1 & 1 & 1 & \dots & 1 \\
0 & 1 & 2 & 3 & \dots & k-1 \\
0 & 0 & 1 & 3 & \dots & \frac{(k-1)(k-2)}{2} \\
0 & 0 & 0 & 1 & \dots & \frac{(k-2)(k-3)}{2} \\
\vdots & \vdots & \vdots & \vdots & \ddots & \vdots \\
0 & 0 & 0 & 0 & \dots & 1
\end{array} \right],
\end{equation}
then
\begin{equation}
\label{Equa:diag_gamma}
L^{-T} [\gamma_{ij}] L^{-1} = \left[ \begin{array}{cccccc}
0 & 3 & 2 & 0 & \dots & 0 \\
3 & -1 & 0 & 0 & \dots & 0 \\
2 & 0 & 0 & 0 & \dots & 0 \\
0 & 0 & 0 & 0 & \dots & 0 \\
\vdots & \vdots & \vdots & \vdots & \ddots & \vdots \\
0 & 0 & 0 & 0 & \dots & 0
\end{array} \right].
\end{equation}

Consider the map
\begin{equation}
\label{Equa:map_tildea}
(U,a_2,\dots,a_k) \mapsto \tilde a(U,a_2,\dots,a_k) = L (a_i U^i), \quad a_1 = - \sum_i a_i.
\end{equation}
This map is a diffeomorphism because $U \in (U^*,1)$ and the Jacobian matrix
\begin{equation*}
\left[ \begin{array}{ccccc}
- \sum_{i=2}^k a_i & - U & -U & \dots & - U \\
2 a_2 U & U^2 & 0 & \dots & 0 \\
3 a_3 U^2 & 0 & U^3 & \dots & 0 \\
\vdots & \vdots & \vdots & \ddots & \vdots \\
k a_k U^{k-1} & 0 & 0 & \dots & U^k
\end{array} \right],
\end{equation*}
has determinant
\begin{equation*}
U^{\frac{k(k+1)}{2}-1} \sum_{i=2}^k (i-1) a_i = U^{\frac{k(k+1)}{2}-1} f'(1) \not= 0.  
\end{equation*}

Using \eqref{Equa:map_tildea} we obtain
\begin{equation*}
Q(U,a_1,\dots,a_k) = \frac{1}{2} \tilde a^T L^{-T} [\gamma_{ij}] L^{-1} \tilde a.
\end{equation*}
Performing the same computations for the derivative of $Q(,a_1,\dots,a_k)$ we obtain
\begin{equation*}
\begin{split}
\partial_U Q(U,a_1,\dots,a_k) &= \frac{1}{2} \sum_{ij} a_i a_j U^{i+j-1} (i+j) \gamma_{ij} \\
&= \frac{1}{2U} \tilde a^T \Big( L^{-T} \diag(1,\dots,k) L^T L^{-T} [\gamma_{ij}] L^{-1} + L^{-T} [\gamma_{ij}] L^{-1} L \diag(1,\dots,k) L^{-1} \Big) \tilde a.
\end{split}
\end{equation*}

We will show that the conditions 
\begin{equation}
\label{Equa:QpartialQ_0}
Q(U,a_1,\dots,a_k) = 0, \quad \partial_U Q(U,a_1,\dots,a_k) = 0
\end{equation}
define a set of codimention at least $2$ in $(U,\tilde a) \in \R^{k+1}$.
%
%

For this purpose, it is enough to consider the first $3$ components of $\tilde a$: indeed because of \eqref{Equa:diag_gamma} we have
\begin{equation*}
L^{-T} \diag(1,\dots,k) L^T L^{-T} [\gamma_{ij}] L^{-1} + L^{-T} [\gamma_{ij}] L^{-1} L \diag(1,\dots,k) L^{-1} = \left[ \begin{array}{cc} \left[ \begin{array}{ccc}
0 & 9 & 14 \\
9 & 2 & 0 \\
14 & 0 & 0
\end{array} \right] & A_{12} \\
A_{12}^T & 0 \end{array} \right],
\end{equation*}
with $A_{12} \in \R^{3 \times (k-3)}$. The second condition in \eqref{Equa:QpartialQ_0} gives then
\begin{equation}
\label{Equa:der_tilde_a}
\left( \begin{array}{ccc}
\tilde a_1 & \tilde a_2 & \tilde a_3
\end{array} \right) \left[ \begin{array}{ccc}
0 & 9 & 14 \\
9 & 2 & 0 \\
14 & 0 & 0
\end{array} \right] \left( \begin{array}{c}
\tilde a_1 \\ \tilde a_2 \\ \tilde a_3
\end{array} \right) + 2 \left( \begin{array}{ccc}
\tilde a_4 & \dots & \tilde a_k
\end{array} \right) A_{12}^T\left( \begin{array}{c}
\tilde a_1 \\ \tilde a_2 \\ \tilde a_3
\end{array} \right) = 0.
\end{equation}
%

Assume that the linear term in $(\tilde a_4,\dots,\tilde a_k)$ of \eqref{Equa:der_tilde_a} is $0$: then by \eqref{Equa:QpartialQ_0} the first $3$ components of $\tilde a$ are required to satisfy 
\begin{equation*}
\begin{cases}
6 \tilde a_1 \tilde a_2 - \tilde a_2^2 + 4 \tilde a_1 \tilde a_3 = 0, \\
18 \tilde a_1 \tilde a_2 + 2 \tilde a_2^2 + 28 \tilde a_1 \tilde a_3 = 0,
\end{cases}
\end{equation*}
a system whose solutions are the three lines
\begin{equation}
\label{Equa:thrre_tildea}
\big\{ \tilde a_1 = \tilde a_2 = 0 \big\}, \quad \big\{ \tilde a_2 = \tilde a_3 = 0 \big\}, \quad \bigg\{ \tilde a_2 = \frac{8}{3} \tilde a_1, \tilde a_3 = - \frac{20}{9} \tilde a_1 \bigg\}.
\end{equation}
%
The first condition reads as
\begin{equation*}
\sum_i a_i U^i = f(U) = 0, \quad \sum_i a_i (i-1) U^i = U \frac{d}{dU} f(U) - f(U) = 0,
\end{equation*}
which is not possible under Assumption \eqref{H1}. The second condition reads as
\begin{equation*}
U \frac{df}{dU} - f = 0, \quad \sum_i a_i \frac{(i-1)(i-2)}{2} U^i = \frac{U^2}{2} \frac{d^2 f}{dU^2} - \bigg( U \frac{df}{dU} - f \bigg) = \frac{U^2}{2} \frac{d^2 f}{dU^2} = 0,
\end{equation*}
and substituting these conditions into \eqref{Equa:QpartialQ_0} we obtain
\begin{equation*}
\partial_U Q(U,a_1,\dots,a_k) = U^2 f \frac{d^3f}{dU^3} = 0.
\end{equation*}
Thus this case corresponds to
\begin{equation*}
U \frac{df}{dU} = f, \quad \frac{d^2 f}{dU^2} = 0, \quad \frac{d^3 f}{dU^3} = 0,
\end{equation*}
which has higher codimension (namely 3).
%

If the second term of \eqref{Equa:der_tilde_a} is not $0$, then it defines a linear function over a codimension $1$ algebraic surface given by the first condition of \eqref{Equa:QpartialQ_0}, i.e. a codimention $2$ algebraic surface. This set corresponds to an algebraic surface in $\R^{k+1}$ by the change of variable \eqref{Equa:map_tildea}: its projection on $(a_1,\dots,a_k)$ is then the set \gls{NTFk} of Hausdorff dimension $k-1$.

\medskip

\noindent{\it Point (3).} The final part is whether the codimension of the set
\begin{equation}
\label{Equa:critical1}
\begin{split}
&\bigg\{ (U,a_1,\dots,a_k), f(U) = \sum_{i=1}^k a_i U^i \ \text{satisfies (\ref{H1}), $f(U^*) = 0$ and} \ F'(U) = F''(U) = 0 \bigg\} \\
\end{split}
\end{equation}
is equal to $1$ in the set
\begin{equation}
\label{Equa:critical2}
\begin{split}
\bigg\{ (U,a_1,\dots,a_k), f(U) = \sum_{i=1}^k a_i U^i \ \text{satisfies (\ref{H1}), $f(U^*) = 0$} \bigg\} 
\end{split}
\end{equation}
with $U^*$ fixed: this condition is equivalent to
\begin{equation*}
a_1 + a_2 U^* = - \sum_{i=3}^k a_i (U^*)^{i-1}, \quad a_1 + a_2 = - \sum_{i=3}^k a_i.
\end{equation*}
The space \eqref{Equa:critical1} is thus the section of the codimension 2 surface given by \eqref{Equa:QpartialQ_0} into the affine codimension $1$ sub-space \eqref{Equa:critical2}: its Hausdorff dimension is thus $k-1$. 
%
\end{proof}

\begin{remark}
\label{Rem:cubic_case}
For the cubic source
\begin{equation*}
f(U,U^*) = U (U - U^*) (1 - U)
\end{equation*}
one gets using \eqref{Equa:L_matrix}
\begin{equation*}
\tilde a_1 = f(U,U^*) = - U^*U + (1 + U^*) U^2 - U^3, \quad \tilde a_2 = U f'(U,U^*) - f(U,U^*) = (1 + U^*) U^2 - 2 U^3, \quad \tilde a_3 = - U^3,
\end{equation*}
so that the three solutions \eqref{Equa:thrre_tildea} yield
\begin{equation*}
\tilde a_1 = 0, \ \tilde a_2 = 0 \quad \Rightarrow \quad f(U,U^*) = 0, \ U \frac{d}{dU} f(U,U^*) = 0,
\end{equation*}
\begin{equation*}
\tilde a_2 = 0, \ \tilde a_3 = 0 \quad \Rightarrow \quad U \frac{d}{dU} f(U,U^*) = f(U,U^*), \ U^3 = 0,
\end{equation*}
\begin{equation*}
\tilde a_2 = \frac{8}{3} \tilde a_1, \ \tilde a_3 = - \frac{20}{9} \tilde a_1 \quad \Rightarrow \quad U \frac{d}{dU} f(U) - f(U) = \frac{8}{3} f(U), \ U^3 = \frac{20}{9} f(U).
\end{equation*}
The only case with is not trivial of $f(U,U^*)$ is the third one, whose unique solution is
\begin{equation*}
U = \frac{2}{5}, \quad U^* = \frac{7}{25}.
\end{equation*}
See Fig. \ref{Fig:Fcubic_der} for a plot of $Q(U,U^*)$ and $\partial_U F(U,U^*)$.

%
%

\begin{figure}
\begin{subfigure}{.475\textwidth}
\resizebox{\textwidth}{!}{\includegraphics{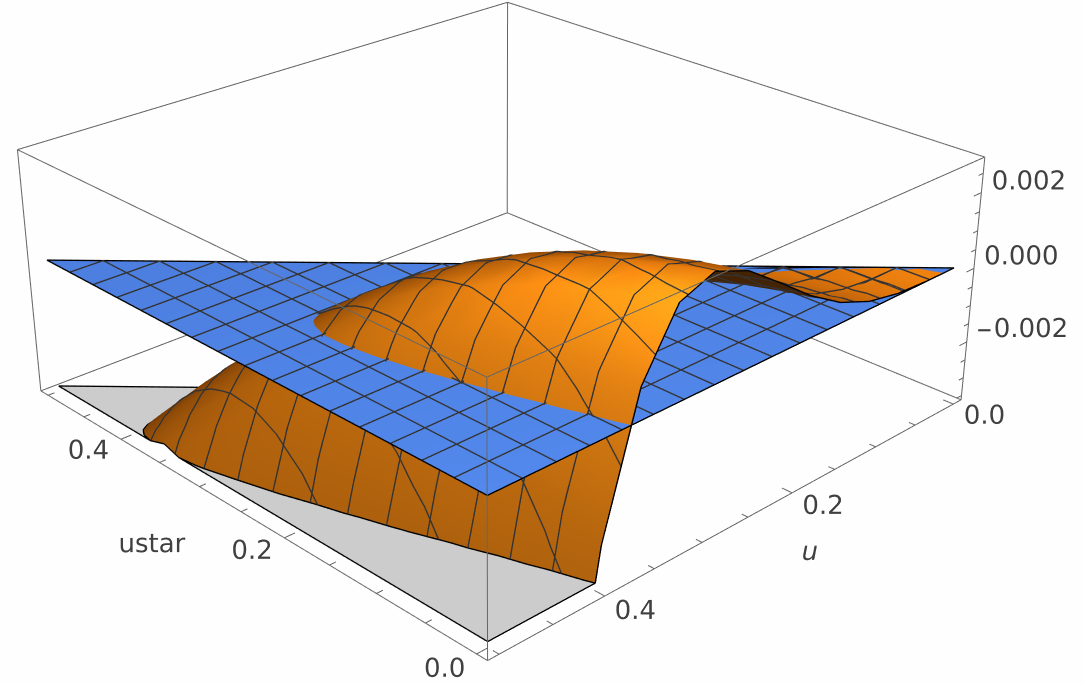}}
\end{subfigure} \hfill
\begin{subfigure}{.475\textwidth}
\resizebox{\textwidth}{!}{\includegraphics{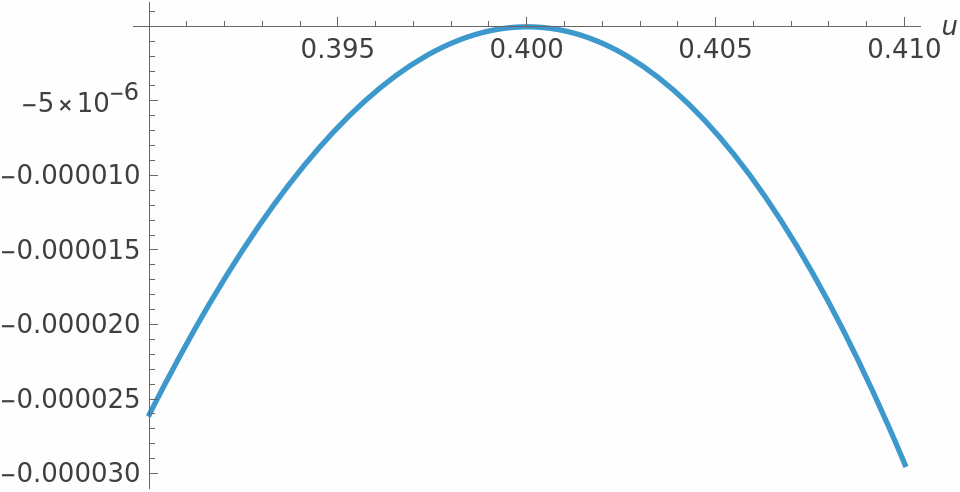}}
\end{subfigure}
\caption{The numerator $Q(U)$ of the function $\partial_U F(U,U^*)$ for the cubic case of Remark \ref{Rem:cubic_case} and its graph when $U^* = \frac{7}{25}$.}
\label{Fig:Fcubic_der}
\end{figure}
\end{remark}

\subsection{Transversality of $\Gamma_u(U,\beta)$, $\Gamma_s(U,\beta)$ and $P^*(U)$}
\label{Ss:transvers_Gammaus}

We turn now the attention to the transversality properties of $\Gamma_u,\Gamma_s$ and $P^*$.

\begin{proposition}
\label{Prop:transv_Gamma_gen}
There exists a compact set \newglossaryentry{Ncal1}{name=\ensuremath{\mathcal N_1},description={set of non-transversality of $\Gamma_s,\Gamma_u$ and the curve $P^*$}} $\gls{Ncal1} \subseteq \R$ of negligible Lebesgue measure such that for all $\beta \in [\beta^*,+\infty)\setminus \mathcal{N}_1$ the unstable and stable manifolds $\Gamma_u(U,\beta),\Gamma_s(U,\beta)$ intersect transversely the critical curve $P^*$ only at a finite number of points: in other words, the sets
\begin{equation*}
\gls{Cs} := \big\{ U\in (U^*, 1): \Gamma_s(U,\beta)=P^*(U) \big\}, \quad \gls{Cu} := \big\{ U\in (U^*, 1): \Gamma_u(U,\beta)=P^*(U) \big\}
\end{equation*}
are at most finite and 
\begin{equation}
\label{tang_10}
F(U) + \beta \neq 0 \quad \forall U \in C_s \cup C_u,
\end{equation}
if $\beta \notin \mathcal N_1$.
\end{proposition}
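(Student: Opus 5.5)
The plan is to reduce everything to a Sard-type argument in the two variables $(U,\beta)$, applied separately to $F_u(U,\beta)=\Gamma_u(U,\beta)-P^*(U)$ and $F_s(U,\beta)=\Gamma_s(U,\beta)-P^*(U)$. The starting observation is that at a point of $C_u$ (resp. $C_s$) we have $\Gamma=P^*$. Hence
\begin{equation*}
\partial_U F_{u}(U,\beta) = -\frac{f(U)}{P^*(U)} - \beta - \frac{dP^*}{dU} = -\big(F(U)+\beta\big),
\end{equation*}
and the same identity holds for $F_s$. So the tangency condition $F(U)+\beta=0$ at a point of $C_u\cup C_s$ is exactly the degeneracy $\partial_U F_{u,s}=0$ on the zero set.

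Next I would use smooth dependence on $\beta$. Since $f\in C^3$, the manifolds $\Gamma_u$ and $\Gamma_s$ are $C^2$ in $(U,\beta)$ away from the equilibria, where $\Gamma_u>0$. Moreover $\partial_\beta\Gamma_u<0$ and $\partial_\beta\Gamma_s>0$ on the relevant range. This is either obtained by differentiating the ODE \eqref{Equa:Gammau_eq0} in $\beta$, giving a linear equation $\frac{d}{dU}\partial_\beta\Gamma = \frac{f}{\Gamma^2}\partial_\beta\Gamma - 1$ with a definite sign, or taken from the comparison bounds stated in Theorem \ref{manifoldbound_intro}.

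With this in hand, at any zero of $F_u$ we have $\partial_\beta F_u\neq 0$. By the Implicit Function Theorem the zero set $\{F_u=0\}\subset(U^*,1)\times(\beta^*,\infty)$ is therefore locally a $C^2$ graph $\beta=b_u(U)$. Differentiating $F_u(U,b_u(U))=0$ gives $b_u'(U)=-\partial_UF_u/\partial_\beta F_u$. Consequently the bad points, where $F+\beta=0$, are exactly the critical points of $b_u$, and the set of non-transversal speeds is the set of critical values of $b_u$. Sard's Lemma in dimension one (with $b_u\in C^1$ suffices) shows that this set $\mathcal N_1^u$ has measure zero. The same construction gives $b_s$ and $\mathcal N_1^s$, and I set $\mathcal N_1=\mathcal N_1^u\cup\mathcal N_1^s$.

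Finiteness of $C_u(\beta)$ for $\beta\notin\mathcal N_1$ then follows as in Proposition \ref{zeros}. An accumulation point of zeros of $F_u(\cdot,\beta)$ would be a zero with $\partial_UF_u=0$, i.e. a critical point of $b_u$ with value $\beta$, which is excluded. For this step one needs the zeros to stay in a compact subset of $(U^*,1)$.

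The main obstacle is the behaviour near the endpoints $U^*$ and $1$, and also for $\beta$ near $\beta^{**}$, where $U_u(\beta)\to U^*$ and $\Gamma_u$ degenerates; this matters both for compactness of $\mathcal N_1$ and for the finiteness argument. I would handle it with asymptotics. Near $U^*$, $P^*\sim\sqrt{U^*f'(U^*)(U-U^*)}$ has infinite slope, while $\Gamma_u$ and $\Gamma_s$ have bounded slope wherever they are positive. So near $U^*$ either the curves do not meet, or they meet with $F+\beta\to+\infty$, which is transversal. Near $1$, $\Gamma_s\sim-\lambda_-(1,\beta)(1-U)$ while $P^*\sim\sqrt{-f'(1)(1-U)}$, so they do not meet in a fixed neighbourhood of $1$ locally uniformly in $\beta$. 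Since also $F+\beta\to-\infty$ there, bad points are confined to a compact set of $U$ values.

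Finally, for large $\beta$ we have $\Gamma_u\to0$ and $\Gamma_s$ large, and one must check that no non-transversal intersections occur for $\beta$ beyond some $\bar\beta$. This would be done by the same asymptotic comparisons, or by observing that $F+\beta>0$ on the relevant compact set. Together these give compactness: $\mathcal N_1$ is then the image of a compact set of critical points under a continuous map, hence compact, and of measure zero by Sard.
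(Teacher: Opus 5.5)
Your argument for the measure-zero and finiteness parts is correct, but it takes a different route from the paper. The paper first discards the Sard-null set $\mathcal N_0$ of critical values of $-F$. On each compact interval of $\R\setminus\mathcal N_0$ it writes the roots of $F+\beta=0$ as smooth branches $U_i(\beta)$, and uses that $G_s=(P^*-\Gamma_s,F+\beta)$ has Jacobian determinant $\partial_\beta\Gamma_s\,F'(U_i(\beta))\neq0$ there. So the bad speeds off $\mathcal N_0$ are isolated, and it takes $\mathcal N_1=\mathcal N_0\cup\mathcal N_{1,s}\cup\mathcal N_{1,u}$.

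You use only $\partial_\beta\Gamma\neq0$ to write $\{\Gamma=P^*\}$ as a graph $\beta=b(U)$. The identity $b'(U)=(F(U)+b(U))/\partial_\beta(\Gamma-P^*)$ then turns the bad speeds into the critical values of $b$, and one-dimensional Sard for a $C^1$ function finishes. Your route is shorter, needs no information on $F'$, and yields exactly the bad set instead of enlarging it by $\mathcal N_0$. The paper's route gives in addition local finiteness of the bad speeds away from $\mathcal N_0$. Note that your critical points of $b_u$ are exactly the zeros of $H(U)=\Gamma_u(U,-F(U))-P^*(U)$ used in Proposition \ref{Prop:Gamma_trans_poly}.

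For the sign of $\partial_\beta\Gamma_u$ at crossings beyond $U_u(\beta)$, rely on the representation $\partial_\beta\Gamma_u(U)=-\int_0^U\exp\big(\int_V^U f(W)\Gamma_u(W)^{-2}dW\big)dV<0$, valid on all of $(0,U_{uf}(\beta))$. Do not rely on \eqref{Equa:Gammau_beta}, which is stated only up to $U_u(\beta)$.

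\textbf{The gap.} What is not yet proved is compactness of $\mathcal N_1$, namely that the non-transversal speeds for $\Gamma_s$ are bounded. Your asymptotics near $U=1$ are only locally uniform in $\beta$. The alternative ``$F+\beta>0$ on the relevant compact set'' is not available either. From $\Gamma_s\ge\beta(1-U)$ and $P^*\le\sqrt{C(1-U)}$, every crossing of $\Gamma_s$ with $P^*$ lies in $\{1-U\le C/\beta^2\}$. So the crossings escape to $U=1$, where $F\to-\infty$, and no $\beta$-independent compact set contains them.

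You need a $\beta$-uniform two-sided bound, for instance $\beta(1-U)\le\Gamma_s\le(\beta+C/\beta)(1-U)$ on $[U^*,1]$. The upper bound follows from $\Gamma_s-\beta(1-U)=\int_U^1 f/\Gamma_s$ together with the lower one. Set $a=-f'(1)$. At a crossing, $P^*=\sqrt{a(1-U)}\,(1+o(1))$ is then squeezed to $P^*=(a/\beta)(1+o(1))$. Hence $-F/\beta=-(3f+Uf')/(2\beta P^*)\to\frac12$, so $F+\beta\approx\beta/2>0$ and there is no tangency for large $\beta$. Concretely, $F=-\beta$ occurs at $1-U\approx a/(4\beta^2)$, while the crossing sits at $1-U\approx a/\beta^2$.

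The sign here is forced geometrically: $\Gamma_s<P^*$ just left of $U=1$, so at the last crossing $F+\beta\ge0$. Be aware that the paper's own treatment of this step, inequality \eqref{ge1} with $\bar U$ independent of $\beta$, drops a factor $\beta^2$ in the preceding asymptotics. It would give $F+\beta<0$ at these crossings, which is impossible. The boundedness of $\mathcal N_{1,s}$ is nevertheless true by the computation above. With this step supplied, your proof is complete.
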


\begin{proof}{\it Study of $\Gamma_s$ and $P^*$.}
First of all, we wish to show that, for $\beta$ sufficiently large, the stable manifold associated to Equation \eqref{Equa:Gammas_eq0} and the critical curve $P^*$ cannot cross tangentially, i.e. the set \newglossaryentry{Ncal1s}{name=\ensuremath{\mathcal N_{1,s}},description={set of non-transversality of $\Gamma_s$ and the curve $P^*$}}
\begin{equation*}
\gls{Ncal1s} := \Big\{ \beta \in [\beta^*, +\infty): \Gamma_s(U,\beta)=P^*(U) \text{ and } F(U)+\beta=0 \text{ for some } U\in (U^*, 1) \Big\}
\end{equation*}
is bounded.

Note that, for $\beta>0$, the function $U \mapsto \beta(1-U)$ is a subsolution to \eqref{Equa:Gammas_eq0} in  $[U^*,1]$, which implies
\begin{equation}
\label{lbound}
\Gamma_s(U,\beta)\ge \beta(1 - U) \quad \forall U \in [U^*,1].
\end{equation}
We can thus estimate for $U^* \le U\le 1$
\begin{align*}
\Gamma_s(U,\beta) - \beta(1-U) =\int_U^1 \frac{f(U')}{\Gamma_s(\beta, U')}\, dU' \le \frac{1}{\beta}\int_U^1 \frac{f(U')}{1-U'} \, dU'\le -\frac{f'(1)}{\beta}(1+C_1)(1-U),
\end{align*}
where $C_1>0$ is a constant only depending on $\|f\|_{C^2}$. When $U$ is sufficiently close to $1$, there holds
$$
3f(U)+Uf'(U) \underset{U \to 1}{\sim} 4 f'(U) U < 0,
$$
and thus, assuming also
$$
\beta^2 \ge 1+f'(1)(1+C_1),
$$
we conclude that
\begin{align*}
-\frac{3f(U)+Uf'(U)}{2 \beta \Gamma_s(U,\beta)}\ge -\frac{3f(U)+Uf'(U)}{[\beta^2-f'(1)(1+C_1)](1-U)} \sim -\frac{f'(1)}{1-U} \to +\infty \quad \text{as} \ U \to 1.
\end{align*}
In particular, we can find $\overline U \in (U^*, 1)$ only depending on $f$ such that
\begin{equation}
\label{ge1}
- \frac{3f(U)+Uf'(U)}{2\beta\Gamma_s(U,\beta)} > 1 \quad \forall U \in [\overline U, 1] \text{ and } \forall \beta > \sqrt{1 + f'(1) (1 + C_1)}.
\end{equation}

Furthermore, note that, once again by \eqref{lbound}, the crossing condition
\begin{equation*}
\Gamma_s(U,\beta) = P^*(U)
\end{equation*}
implies, in particular,
\begin{equation*}
\beta (1-U) \le \Gamma_s(U,\beta)=\sqrt{Uf(U)}\le\sqrt{C_2(1-U)},
\end{equation*}
i.e.
\begin{equation}
\label{lb}
U \ge 1 - \frac{C_2}{\beta^2}, \quad \text{with } C_2 = C_2(\|f\|_{C^2}).
\end{equation}
We set
\begin{equation}
\label{betabar}
\overline {\beta}:=\max \bigg\{ \sqrt{1+f'(1)(1+C_1)}, \sqrt{\frac{C_2}{1-\overline{U}}} \bigg\}.
\end{equation}
By contradiction, let $\beta > \overline{\beta}$ and $U_T \in (U^*,1)$ such that
\begin{equation}
\label{tangu}
\Gamma_s(\beta, U_T)=\sqrt{U_Tf(U_T)} \quad \text{and} \quad F(U_T)+\beta=0.
\end{equation}
By \eqref{lb} and the choice of $\overline {\beta}$ in \eqref{betabar}, we have $U_T\ge \overline{U}$ and, thus, by \eqref{tangu} and \eqref{ge1},
\begin{align*}
1=-\frac{F(U_T)}{\beta}=-\frac{3f(U_T)+U_Tf'(U_T)}{2\beta\Gamma_s(\beta, U_T)}> 1,
\end{align*}
which clearly contradicts our assumption. We can thus conclude $\mathcal{N}_{1,s} \subseteq [\beta^*, \overline{\beta}]$.

By \eqref{Equa:limi_F}, there exists $I \subseteq (U^*,1)$ compact such that $F(U)+\beta\neq 0 \ \forall \beta \in [\beta^*, \overline {\beta}]$ and $\forall U \notin I$, which implies
\begin{equation*}
\mathcal{N}_{1,s} = \Big\{ \beta \in[\beta^*, \overline {\beta}]: \Gamma_s(U, \beta)=P^*(U) \text{ and } F(U)+\beta=0 \text{ for some } U\in I \Big\},
\end{equation*}
i.e. $\mathcal{N}_{1,s}$ is compact. By Proposition \ref{zeros}, it clearly holds
\begin{equation*}
\mathcal{L}^1(\mathcal{N}_{1,s}) = \mathcal{L}^1 \big( \mathcal{N}_{1,s} \setminus \mathcal{N}_0 \big).
\end{equation*}
Since the set $\R \setminus\mathcal{N}_0$ is open, it can be expressed as the countable union of compact intervals: by Remark \ref{zerosp} and the Implicit Function Theorem, if $I$ is one of these intervals, we can exhibit $N \ge 1$ and smooth functions \newglossaryentry{Uibeta}{name=\ensuremath{U_i(\beta)},description={roots of $F(U) + \beta = 0$}} $U_i = \gls{Uibeta}$, $i=1,...,N$, such that $\forall \beta \in I$ we have
\begin{equation}
\label{Equa:intervals_Ui}
\big\{ U \in (U^*,1): \ F(U)+\beta=0 \big\} = \big\{ U_1(\beta) < ... < U_{N}(\beta) \big\}. 
\end{equation}

Fix $\beta_0 < \beta^*$ and define the function \newglossaryentry{Gs}{name=\ensuremath{G_s},description={transversality function for $\Gamma_s$ and $P^*$}}
\begin{equation*}
\begin{array}{ccccc}
\gls{Gs} &:& (U^*,1) \times (\beta_0, + \infty) &\to& \R^2, \\
&& (U,\beta) &\mapsto&  (P^*(U)-\Gamma_s(U,\beta), F(U)+\beta).
\end{array}
\end{equation*}
If $I$ is one of the intervals where \eqref{Equa:intervals_Ui} holds, in order to prove our thesis it will be sufficient to show that
\begin{equation*}
\mathcal{L}^1 \Big( \Big\{ \beta \in I: \ G_s(U, \beta)= 0 \text{ for some } U\in I \Big\} \Big) = 0.
\end{equation*}
This means that
\begin{equation*}
\Big\{ \beta \in K : \ G_s(U, \beta)=0 \text{ for some } U \in (U^*, 1) \Big\} = \bigcup_{i=1}^N \big\{ \beta \in I: \ G_s(U_i(\beta), \beta) = 0 \big\} =: \bigcup_{i=1}^N \mathcal N_{1,s,i}
\end{equation*}
is negligible, i.e. every $\mathcal N_{1,s,i}$ is $\mathscr L^1$-negligible. Computing the Jacobian matrix of $G$ we get
\begin{equation*}
DG_s(U,\beta) = \left[ \begin{array}{cc}
F(U)+\beta  & -\partial_{\beta} \Gamma_s(U,\beta) \\
F'(U) & 1
\end{array} \right].
\end{equation*}
Using the ODE \eqref{Equa:Gammas_eq0}, one obtains by differentiating w.r.t. $\beta$ (see also \eqref{Equa:Gammas_beta}) that
\begin{equation*}
\partial_\beta \Gamma_s(U,\beta) > 0. 
\end{equation*}
and then 
$$
\det(DG_s(U_i(\beta), \beta)) = \partial_{\beta} \Gamma_s(U_i(\beta),\beta)F'(U_i(\beta))\neq 0 \quad \forall \beta \in I, i=1,...,N.
$$
We deduce that, given $\beta_0\in I$ such that $G_s(U_i(\beta_0), \beta_0)=0$, necessarily
\begin{equation*}
G_s(U_i(\beta), \beta) = (P^*(U_i(\beta)) - \Gamma_s(U_i(\beta),\beta),0) \neq 0 \quad \forall \beta \in (\beta_0-\delta_0, \beta_0+\delta_0) \setminus \beta_0,
\end{equation*}
i.e. points in $N_i$ are isolated and, thus, at most locally finite in $I$.

It remains to show that the cardinality of $C_s$ is finite. We exploit once again the argument in the proof of Proposition \ref{zeros} and Remark \ref{zerosp}: by contradiction consider a sequence $U_n \subseteq C_s$ consisting of distinct points and $U\in [U^*,1]$ such that $U_n \to U$. Clearly there holds $\Gamma_s(U, \beta)=P^*(U)$ and
\begin{equation*}
F(U)+\beta=(P^*-\Gamma_s(\cdot, \beta))'(U)=\lim_{n\to\infty}  \frac{(P^*(U_n)-\Gamma_s(U_n, \beta))-(P^*(U)-\Gamma_s(U, \beta))}{U_n-U}=0,
\end{equation*}
which is a contradiction both if $U\in \{U^*,1\}$ (by \eqref{Equa:limi_F}) and if $U\in (U^*,1)$ (by \eqref{tang_10}).

The proof for $C_u$ will follow the same line. Here, $\Gamma_u(U,\beta)=0 $ for all $\beta \ge \beta^{**}$, in which case the unstable manifold and $P^*$ never cross in the open interval $(U^*,1)$: in particular
\begin{align*}
\mathcal{N}_{1,u} :&= \Big\{ \beta \in [\beta^*, +\infty): \Gamma_u(U,\beta)=P^*(U) \text{ and } F(U)+\beta=0 \text{ for some } U \in (U^*, 1) \Big\} \\
&= \Big\{ \beta \in [\beta^*, \beta^{**}]: G_u(U,\beta)=(0,0)  \text{ for some } U \in (U^*, 1) \Big\},
\end{align*}
with \newglossaryentry{Gu}{name=\ensuremath{G_u},description={transversality function for $\Gamma_u$ and $P^*$}}
\begin{equation*}
\begin{array}{ccccc}
\gls{Gu} &:& (U^*,1) \times (\beta_0, +\infty) &\to& \R^2, \\
&& (U,\beta) &\mapsto& (P^*(U)-\Gamma_u(U,\beta), F(U)+\beta).
\end{array}
\end{equation*}

Finally, the thesis follows setting $\mathcal{N}_1 := \mathcal{N}_0\cup \mathcal{N}_{1,s} \cup \mathcal{N}_{1,u}$.
\end{proof}

\begin{remark}
\label{Rem:controll_inter}
The same computation as in Remark \ref{zerosp} gives that the number of intersections of $\Gamma_u(U),\Gamma_s(U)$ and $P^*(U)$ is constant in every connected component of $\R \setminus \mathcal N_1$.
\end{remark}

As in the case of $F(U) + \beta = 0$, by considering polynomial source $f(U)$ we can improve Proposition \ref{Prop:transv_Gamma_gen} obtaining that \gls{Ncal1} is finite.

\begin{proposition}
\label{Prop:Gamma_trans_poly}
Assume that the source $f(U)$ is a polynomial satisfying Assumption \eqref{H1} with $U^*$ fixed. Then
\begin{enumerate}
\item for all $\beta$ the cardinality of $C_s(\beta)$, $C_u(\beta)$ is uniformly bounded;
\item the set \gls{Ncal1}, defined in Proposition \ref{Prop:transv_Gamma_gen} as the set of speeds such that $\Gamma_u(U,\beta)$, $\Gamma_s(U,\beta)$ do not cross $P^*(U)$ transversely, is finite.
\end{enumerate}
\end{proposition}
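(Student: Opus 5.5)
The plan is to exploit analyticity. For polynomial $f$ the vector field $(P,-f(U)-\beta P)$ is polynomial in $(U,P,\beta)$. Hence the invariant manifolds $\Gamma_u(U,\beta)$ and $\Gamma_s(U,\beta)$ are real-analytic in $(U,\beta)$ wherever $P>0$; this is analytic dependence of solutions of analytic ODEs on parameters. Near the saddles the stable manifold theorem is applied in its analytic version, since the eigenvalues $\lambda_\pm(0,\beta),\lambda_\pm(1,\beta)$ are analytic in $\beta$. The curve $P^*(U)=\sqrt{Uf(U)}$ is also analytic on $(U^*,1)$. Consequently the functions $F_u(U,\beta)=\Gamma_u(U,\beta)-P^*(U)$ and $F_s(U,\beta)=\Gamma_s(U,\beta)-P^*(U)$ are analytic on the relevant domains. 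The same holds for $G_s,G_u$ of Proposition \ref{Prop:transv_Gamma_gen}. Their zero sets are semianalytic sets, and I will use the local finiteness of semianalytic sets (\L ojasiewicz).

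For Point (1), I first restrict to a compact region and then use finiteness of fibres.
\begin{enumerate}
\item For large $\beta$, the bounds \eqref{lbound} and \eqref{lb} confine every zero of $F_s$ to $U\ge 1-C_2/\beta^2$.
\item Near $U=1$, \eqref{ge1} shows that $\partial_U F_s=-(F(U)+\beta)\,P^*/\Gamma_s$ at crossings has a fixed sign. So for $\beta>\overline\beta$ there is at most one crossing in $[\overline U,1)$, and none elsewhere.
\item For $\beta\ge\beta^{**}$, $\Gamma_u$ does not reach $(U^*,1)$, so $C_u=\emptyset$.
\item This reduces everything to $\beta$ in a compact interval $[\beta^*,\max(\overline\beta,\beta^{**})]$.
\item Near the endpoints $U=U^*$ and $U=1$, \eqref{Equa:limi_F} shows that $F+\beta$ is nonzero. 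Hence crossings are transverse there and isolated uniformly in $\beta$, and $U$ ranges over a compact $I\Subset(U^*,1)$.
\item The set $Z_s=\{(U,\beta)\in I\times[\beta^*,\bar\beta]:F_s=0\}$ is a compact semianalytic set. Its fibres over $\beta$ are finite: $F_s(\cdot,\beta)$ is analytic and not identically zero, since \eqref{Equa:limi_F} and the ODE show that $P^*$ and $\Gamma_s$ have different slopes near $1$.
\item A compact semianalytic set with finite fibres has uniformly bounded fibre cardinality, by cylindrical decomposition / Gabrielov's theorem.
\end{enumerate}
The same argument applies to $F_u$ on $\beta<\beta^{**}$. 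There I must handle $\beta\nearrow\beta^{**}$, where $U_u(\beta)\to U^*$: the crossings concentrate near $U^*$, where $F\to+\infty$ forces transversality and hence at most one crossing.

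For Point (2), the non-transverse speeds form the projection onto $\beta$ of $\{G_s=0\}\cup\{G_u=0\}$, which is a compact subanalytic subset of $\R$. A compact subanalytic subset of $\R$ is a finite union of points and intervals. It has measure zero by Proposition \ref{Prop:transv_Gamma_gen}, so it contains no interval and is finite. I will also use that $\partial_\beta\Gamma_s>0$, so that $\det DG_s\ne0$ at points with $F'\ne0$. Together with Proposition \ref{Prop:F_transv_poly}(1) (finitely many $F'=0$), this gives an alternative, elementary route: away from the finitely many $U$ with $F'(U)=0$, the zeros of $G_s$ are isolated and, by compactness, finite. The finitely many values $\beta=-F(U)$ with $F'(U)=0$ then add only finitely many more speeds.

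The main obstacle I expect is justifying analyticity and compactness up to the boundary. This means uniform analytic control of $\Gamma_u$ near $(0,0)$ and of $\Gamma_s$ near $(1,0)$ together with the parameter, and especially the degeneration as $\beta\to\beta^{**}$, where $\Gamma_u$ approaches the strong stable manifold of the focus/node $(U^*,0)$. My first attempt will be to avoid that region entirely. Near $U^*$, $F+\beta>0$ for all $\beta$ in the compact range, which gives a uniform $\delta$ with $[U^*,U^*+\delta]$ free of tangencies and containing at most one crossing. The semianalytic argument is then only needed on $[U^*+\delta,1-\delta]$, where all the objects are analytic in both variables.
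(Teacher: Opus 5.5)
Your main line is correct, but it relies on much heavier machinery than the paper does.

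\textbf{Point (1).} The paper uses no analyticity of $\Gamma_u,\Gamma_s$ at all. Every zero of $F(U)+\beta$ is a root of $(3f+Uf')^2-4\beta^2Uf$. For every $\beta$ this is a nonzero polynomial of degree $2k$, so $\{F+\beta\neq0\}$ has at most $2k+1$ components, uniformly in $\beta$. Since $\partial_U(\Gamma-P^*)=-(F+\beta)$ at each crossing, every component contains at most one crossing. This is exactly the sign argument you already use near $U=1$ and $U=U^*$ (your steps 2 and 5). Applied globally, it gives an explicit bound and makes three things unnecessary: the semianalytic family $Z_s$, the uniform-finiteness theorem, and the non-product domain of $\Gamma_u$ near $\beta^{**}$.

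\textbf{Point (2).} The paper works with the one-variable function $H(U)=\Gamma_u(U,-F(U))-P^*(U)$, which is analytic on the compact region $F^{-1}([-\beta^{**},-\beta^*])$. At a zero, $H'=-\partial_\beta\Gamma_u\,F'$, so $H\equiv0$ would force $F'\equiv0$; hence the zeros are finite. Your projection argument is also valid: a compact subanalytic subset of $\R$ is a finite union of points and intervals, and Proposition \ref{Prop:transv_Gamma_gen} excludes intervals. However, it needs joint analyticity in $(U,\beta)$ on a neighbourhood of the zero sets (global subanalyticity), and it imports the Sard-based measure-zero result. The paper needs only one-variable analyticity of $H$ plus the derivative identity.

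\textbf{Gap in your alternative route.} Your ``elementary alternative'' for Point (2) has a gap as written. Isolatedness of the zeros of $G_s$ with $F'(U)\neq0$, together with compactness, does not prevent such zeros from accumulating at one of the finitely many points $(U_0,-F(U_0))$ with $F'(U_0)=0$. Adding the speeds $-F(U_0)$ does not address this.

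The gap is easy to close. Let $\Gamma$ denote $\Gamma_s$ or $\Gamma_u$, and set $H(U)=\Gamma(U,-F(U))-P^*(U)$. Using the ODE and $F=\frac{dP^*}{dU}+\frac{f}{P^*}$, $H$ satisfies identically
\begin{equation*}
\frac{dH}{dU}=\frac{f(U)}{P^*(U)\,\Gamma(U,-F(U))}\,H(U)-\partial_\beta\Gamma(U,-F(U))\,F'(U).
\end{equation*}
Take $A$ with $A'=\frac{f}{P^*\Gamma}$. Because $\partial_\beta\Gamma_s>0$ and $\partial_\beta\Gamma_u<0$, the function $e^{-A}H$ is strictly monotone on every interval inside the domain of $H$ on which $F'$ has a strict sign. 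So $H$ has at most one zero on each such interval. Its zeros lie in a compact part of its domain, and $F'$ has finitely many zeros by Proposition \ref{Prop:F_transv_poly}(1). Hence $\mathcal N_1$ is finite, and this route uses no analyticity of the invariant manifolds at all.
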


\begin{proof}
It is sufficient to prove the proposition for $\Gamma_u$, being the proof for $\Gamma_s$ entirely similar.

\smallskip

\noindent{\it Finiteness of \gls{Cs}, \gls{Cu}.} For every $\beta$, the set where $F(U) + \beta = 0$ is a subset of the zeros of the polynomial
\begin{equation*}
\big( 3 f(U) + U f'(U) \big)^2 = 4 \beta^2 U f(U),
\end{equation*}
and then the number of zeros of $F(U) + \beta$ is uniformly bounded w.r.t. $\beta$.
Since there can be only one crossing point where of $P^*(U) - \Gamma_u(U) = 0$ in every connected component of $\{F(U) + \beta \not= 0\}$, it follows that
\begin{equation*}
\sharp \big\{ U : P^*(U) - \Gamma_u(U,\beta) = 0 \big\} \leq N < \infty
\end{equation*}
for some $N$. Hence the cardinality of $C_s(\beta),C_u(\beta)$ is uniformly bounded for all $\beta$.

\smallskip

\noindent{\it Finiteness of $\mathcal N_{1}$.} Consider the function
\begin{equation*}
U \mapsto H(U) = \Gamma_u(U,-F(U)) - P^*(U).
\end{equation*}
This function is $0$ exactly when
\begin{equation*}
\beta = - F(U) \quad \text{and} \quad \Gamma_u(U,\beta) = P^*(U).
\end{equation*}
Hence it is enough to prove that it has finitely many zeros $U_i$, which in turn it implies that $\mathcal N_{1,u} = - F(\{U_i\}_i)$ is finite.

For polynomial sources, $H(U)$ is real analytic in the interval because $\Gamma_u(U,\beta)$, $F(U)$ and $P^*$ are (the result \cite[Theorem 2.2]{Broer_Tangerman_1986} can be applied directly here since the contraction mapping generating $\Gamma_u$ depends analytically on $\beta$). Hence its zero set is locally finite.

Observe that the interval of interest is the set $F^{-1}([-\beta^{**},-\beta^*])$, which is compactly contained in $(U^*,1)$ because of $|F(U)| \to \infty$ as $U \to U^*,1$, and the zeros of $H(U)$ cannot converge to $U_{uf}(\beta)$.

Computing the derivative w.r.t. $U$ of $H(U)$ in one of its roots we obtain using the ODE
\begin{equation*}
\begin{split}
\frac{dH}{dU} &= \frac{d\Gamma_u}{dU} - \partial_\beta \Gamma_u \frac{dF}{dU} - \frac{dP^*}{dU} = - \frac{d\Gamma_u}{d\beta} \frac{dF}{dU}
\end{split}
\end{equation*}
Hence $H$ cannot be identically $0$ because $\partial_\beta \Gamma_u < 0$ and then $\frac{dF}{dU} = 0$ identically, which is impossible since then $F$ would constant: so the roots of $H$ are finite and isolated.

A similar proof can be done for $\Gamma_s$: the set $\mathcal N_{1,s}$ of speeds $\beta$ such $\Gamma_s$ does not cross $P^*(U)$ transversely is finite. In this case the compactness of the set $U$ is obtained by considering the set \gls{Ncal1} of Proposition \ref{Prop:transv_Gamma_gen}.
%
%

Taking $\gls{Ncal1} = \mathcal N_{1,u} \cup \mathcal N_{1,s}$ we obtain the second part of the statement. 
\end{proof}

\subsection{Transversal set $\mathcal T$ of speeds $\beta$}
\label{Ss:transc_calT}

We now collect the results of this section recalling the set of \emph{transversal speeds}, defined in Definition \eqref{Def:trans_calT_intro} as follows.


\begin{definition}
\label{Def:trans_calT}
We say that $\beta \ge \beta^*$ is \emph{transversal}, and write $\beta \in \gls{Tcal} = \mathcal T_f$, if
\begin{enumerate}
\item there exist finitely many $U_0=U^*<U_1<...<U_{2N}=1$ such that
\begin{equation*}
F(U)+\beta \begin{cases} =0 & \text{for }  U=U_i, \ i=1,...,2N-1, \\
>0  & \text{for } U\in P_i:=(U_{2i},U_{2i+1}), \ i=0,...,N-1, \\
<0  & \text{for } U\in N_i:=(U_{2i+1},U_{2i+2}), \ i=0,...,N-1;
\end{cases}
\end{equation*}
\item the tangency points are not degenerate:
\begin{equation*}
(-1)^i F'(U_i) > 0 \quad \forall i=1,..., 2N-1;
\end{equation*}
\item the sets
\begin{equation*}
\gls{Cs} := \big\{ U \in (U^*, 1): \Gamma_s(U,\beta) = P^*(U) \big\} \quad \text{and} \quad \gls{Cu} := \big\{ U \in (U^*, 1): \Gamma_u(U,\beta) = P^*(U) \big\}
\end{equation*}
are finite;
\item the points in $C_s \cup C_u$ correspond to transverse crossings:
\begin{equation*}
F(U) + \beta \neq 0 \quad \forall U \in C_s \cup C_u.
\end{equation*}
\end{enumerate}
\end{definition}

Since the properties above are valid in an open set $(\beta,f) \in \R \times C^3([0,1])$ with the product topology because the functions involved are continuous, we can collect the results of this section to obtain a proof of Theorem \ref{Theo:transv_generic_intro}:

\begin{theorem}
\label{Theo:transv_generic}
There is a dense open set \gls{Tfrak} in the space of admissible sources $f$ w.r.t. the norm $C^3([0,1])$, such that:
\begin{enumerate}
\item it contains a dense set of polynomials;
\item  \label{Point3:trandv_generic} the function \gls{Fdef} is such that whenever $F'(U) = 0$ then $F''(U) \not= 0$;
\item for all $f \in \mathfrak T$ there exists a finite set $\mathcal N_0 \cup \mathcal N_1$ given by Propositions \ref{zeros} and Proposition \ref{Prop:transv_Gamma_gen} such that the transversality set \gls{Tcalf} of $f$ is
$$
\gls{Tcalf} = [\beta^*,\infty) \setminus (\mathcal N_0 \cup \mathcal N_1);
$$
\item the set
\begin{equation*}
\bigcup_{f \in \mathfrak T} \mathcal T_f \times \{f\} \subset \gls{Dcal} = \Big\{ (\beta,f) \in \R \times C^3([0,1]), \beta > \beta^*(f), f \ \text{satisfying Assumption \eqref{H1}} \Big\}
\end{equation*}
is open and dense in the product topology, and of full measure in the subset
\begin{equation*}
\gls{Dcalk} = \bigg\{ (\beta,a_1,\dots,a_k) \in \R^{k+1}, f = \sum_{i=1}^K a_i U^i,\ \beta \geq \beta^*(f), \ f \ \text{satisfies Assumption \eqref{H1}} \bigg\}.
\end{equation*}
\end{enumerate}
\end{theorem}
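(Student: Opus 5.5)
The plan is to define $\mathfrak T$ as the set of admissible $f \in C^3([0,1])$ satisfying three conditions. First, every critical point of $F$ is nondegenerate, i.e. $F'(U)=0 \Rightarrow F''(U)\neq 0$. Second, for every $\beta \geq \beta^*$ the sets $C_u(\beta)$ and $C_s(\beta)$ are finite and of uniformly bounded cardinality. Third, the set of $\beta$ where a tangential crossing of $\Gamma_u$ or $\Gamma_s$ with $P^*$ occurs is finite. With this definition Point \eqref{Point3:trandv_generic} holds by construction, and the remaining points are proved in order.

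\emph{Point (3), finiteness of $\mathcal N_0\cup\mathcal N_1$.} Since $F'\to-\infty$ at $U^*$ and $1$ (see \eqref{Equa:limi_F} and the proof of Proposition \ref{zeros}), the critical points of $F$ lie in a compact $I\Subset(U^*,1)$. Nondegeneracy makes each of them isolated, so there are finitely many, and $\mathcal N_0=-F(\{F'=0\})$ is finite. For $\beta\notin\mathcal N_0$ the roots $U_i(\beta)$ are finite and nondegenerate, with the sign pattern of Proposition \ref{zeros}. The sets $\mathcal N_{1,u},\mathcal N_{1,s}$ are finite by the third condition. Finiteness of $C_u,C_s$ off these sets follows from the accumulation argument in Proposition \ref{Prop:transv_Gamma_gen}. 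Hence $\mathcal T_f=[\beta^*,\infty)\setminus(\mathcal N_0\cup\mathcal N_1)$.

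\emph{Point (1) and density.} Polynomials satisfying \eqref{H1} are $C^3$-dense among admissible sources. Approximate $f$ by polynomials with simple zeros at $0$ and $1$ and at a point near $U^*$, then adjust the leading coefficients so that the zeros are exactly $0,U^*,1$; the strict sign conditions in \eqref{H1} are open, so they survive. By Proposition \ref{Prop:F_transv_poly}, Point \eqref{Point2:trans_poly}, the degree-$k$ polynomials for which $F$ has a degenerate critical point form a closed set $\mathcal{NT}_k$ of dimension at most $k-1$, hence nowhere dense. A small perturbation of coefficients therefore lands outside it. By Proposition \ref{Prop:Gamma_trans_poly}, every polynomial source has finite $\mathcal N_1$ and uniformly bounded $C_u,C_s$. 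So polynomials outside $\bigcup_k\mathcal{NT}_k$ belong to $\mathfrak T$ and are dense.

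\emph{Openness, and Point (4).} This is the main obstacle. Nondegeneracy of the critical points of $F$ is $C^3$-open: on compacts of $(U^*,1)$, $F''$ depends continuously on $f$ in $C^3$ topology, and near the endpoints $F'<0$ uniformly under small perturbation because the asymptotics \eqref{Equa:limi_F} are stable. The conditions on $\Gamma_{u},\Gamma_s$ are harder, since finiteness of the tangential set is not obviously open. I would try to show that at each tangency $\beta_0\in\mathcal N_1$ the map $G_u$ (or $G_s$) has invertible Jacobian $\partial_\beta\Gamma\,F'(U)\neq0$, as in Proposition \ref{Prop:transv_Gamma_gen}. This holds once $U$ is not a critical point of $F$, which can be arranged generically. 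The implicit function theorem then makes tangencies persist as isolated, continuously moving points under $C^3$ perturbation of $f$, using continuous dependence of $\Gamma_{u},\Gamma_s$ on $(\beta,f)$ (Theorem \ref{manifoldbound_intro}). If this step fails near $\beta^{**}$, I would shrink $\mathfrak T$ to the interior of the set so defined; this interior is still dense because it contains the generic polynomials above.

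Openness of $\bigcup_f\mathcal T_f\times\{f\}$ in product topology follows from the same continuity: conditions (1)--(4) of Definition \ref{Def:trans_calT} are strict inequalities on compact sets, plus finiteness, which persists by the implicit function theorem. Density follows because $\mathcal T_f$ is cofinite in $[\beta^*,\infty)$ for each $f$ in the dense set $\mathfrak T$. For full measure in $\mathcal D(k)$, note that the complement of $\bigcup_f\mathcal T_f\times\{f\}$ in $\mathcal D(k)$ lies inside $\{(\beta,a)\,:\,a\in\mathcal{NT}_k\}\cup\{(\beta,a)\,:\,\beta\in\mathcal N_0(a)\cup\mathcal N_1(a)\}$. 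The first set is $\mathscr L^{k+1}$-null because $\mathcal{NT}_k$ is $\mathscr L^k$-null. The second has finite $\beta$-sections for every $a$ by Proposition \ref{Prop:Gamma_trans_poly}, so it is null by Fubini.
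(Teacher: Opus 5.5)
You handle Points (1)--(3) and the full-measure part of Point (4) (the Fubini argument) correctly, and your argument is close to the paper's. The gap is in the openness of $\mathfrak T$. You rightly single this out as the main obstacle, but you do not close it.

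Your persistence argument uses $\det DG_u=\partial_\beta\Gamma_u\,F'(U)\neq 0$, so it only covers tangencies at points $U_0$ with $F'(U_0)\neq 0$. Your definition of $\mathfrak T$ does not exclude a tangency of $\Gamma_u$ or $\Gamma_s$ with $P^*$ located exactly at a critical point of $F$, i.e.\ a speed in $\mathcal N_0\cap(\mathcal N_{1,u}\cup\mathcal N_{1,s})$. Saying this ``can be arranged generically'' does not give a $C^3$-neighbourhood of such an $f$ inside $\mathfrak T$. If you instead add the exclusion to the definition, you need a further argument that generic polynomials satisfy it, and Propositions \ref{Prop:F_transv_poly} and \ref{Prop:Gamma_trans_poly} do not provide one. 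The fallback of passing to the interior is circular. To know the interior still contains the generic polynomials, you must show that every source $C^3$-close to such a polynomial still has finitely many tangential speeds. Such sources are not analytic, so Proposition \ref{Prop:Gamma_trans_poly} no longer applies, and this is exactly the openness statement in question.

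The missing idea is to parametrize tangencies by the single function $H(U)=\Gamma_u(U,-F(U))-P^*(U)$. Its zeros are exactly the pairs $(U,\beta=-F(U))$ at which $\Gamma_u$ crosses $P^*$ tangentially. From the ODE for $\Gamma_u$ and $F=\frac{dP^*}{dU}+\frac{f}{P^*}$ one gets $H'=\frac{f}{P^*\Gamma_u}H-\partial_\beta\Gamma_u\,F'$. Hence:
\begin{itemize}
\item At a zero of $H$ one has $H'=-\partial_\beta\Gamma_u\,F'$, which is your Jacobian.
\item If in addition $F'=0$ there, then $H''=-\partial_\beta\Gamma_u\,F''\neq 0$ by Point (2), since $\partial_\beta\Gamma_u<0$. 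For $\Gamma_s$ the same holds using $\partial_\beta\Gamma_s>0$.
\end{itemize}
So Point (2) alone forces every tangency to be a zero of $H$ of multiplicity at most two. On the compact range of $U$ where tangencies can occur, this condition is stable under small $C^3$ perturbations of $f$. Moreover, at most two tangential speeds can be created or destroyed near each degenerate one. This is how the paper obtains openness, and it covers precisely the case your argument leaves out.
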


\begin{proof}
Being the functions
\begin{equation*}
(U,\beta) \mapsto F(U) + \beta, \ F'(U),\  F''(U), \ \Gamma_u(U,\beta), \ \Gamma_s(U,\beta)
\end{equation*}
continuous in $C^3((U^*,1))$, the conditions listed in Definition \ref{Def:trans_calT} together with Point \eqref{Point3:trandv_generic} define an open set $\mathfrak T$ in $C^3$. Indeed for speeds $\beta = - F(U)$ with $F'(U) = 0$ we have that $F''(U) \not= 0$ and then at most two zeros of $F(U) + \beta = 0$ are generated or canceled. Moreover, the function $H(U) = \Gamma_u(U,-F(U)) - P^*(U)$ characterizing the tangential intersection of $\Gamma_u$ with $P^*$ satisfies at a zero of $\frac{dH}{dU}$
\begin{equation*}
\frac{dF}{dU} = 0, \quad \frac{d^2H}{dU^2} = - \partial_\beta \Gamma_u \frac{d^2 F}{dU^2}.
\end{equation*}
Thus in the case $\frac{dF}{dU} = 0$ it holds $\frac{d^2H}{dU^2} \not= 0$ because of Point \ref{Point3:trandv_generic}, and hence again at most 2 roots are generated or canceled. The same argument applies to $\Gamma_s$ since $\partial_\beta \Gamma_s > 0$.

By Propositions \ref{Prop:F_transv_poly} and \ref{Prop:Gamma_trans_poly} this set $\mathfrak T$ contains a set of polynomials which is dense in the space of polynomial sources, and it is of full measure being its complement the union of a closed set of Hausdorff dimension $k-1$ (Proposition \ref{Prop:F_transv_poly}) and the graphs of finitely many functions $\beta(f)$ (Proposition \ref{Prop:Gamma_trans_poly}).

By Weierstrass Theorem \cite[Section 12.3]{royden2010real} we conclude that $\mathfrak T$ must be dense in $C^3(U^*,1)$.
\end{proof}

A generic situation is presented in Fig. \ref{Fig:transv_generic}.

\begin{figure}
\resizebox{\textwidth}{!}{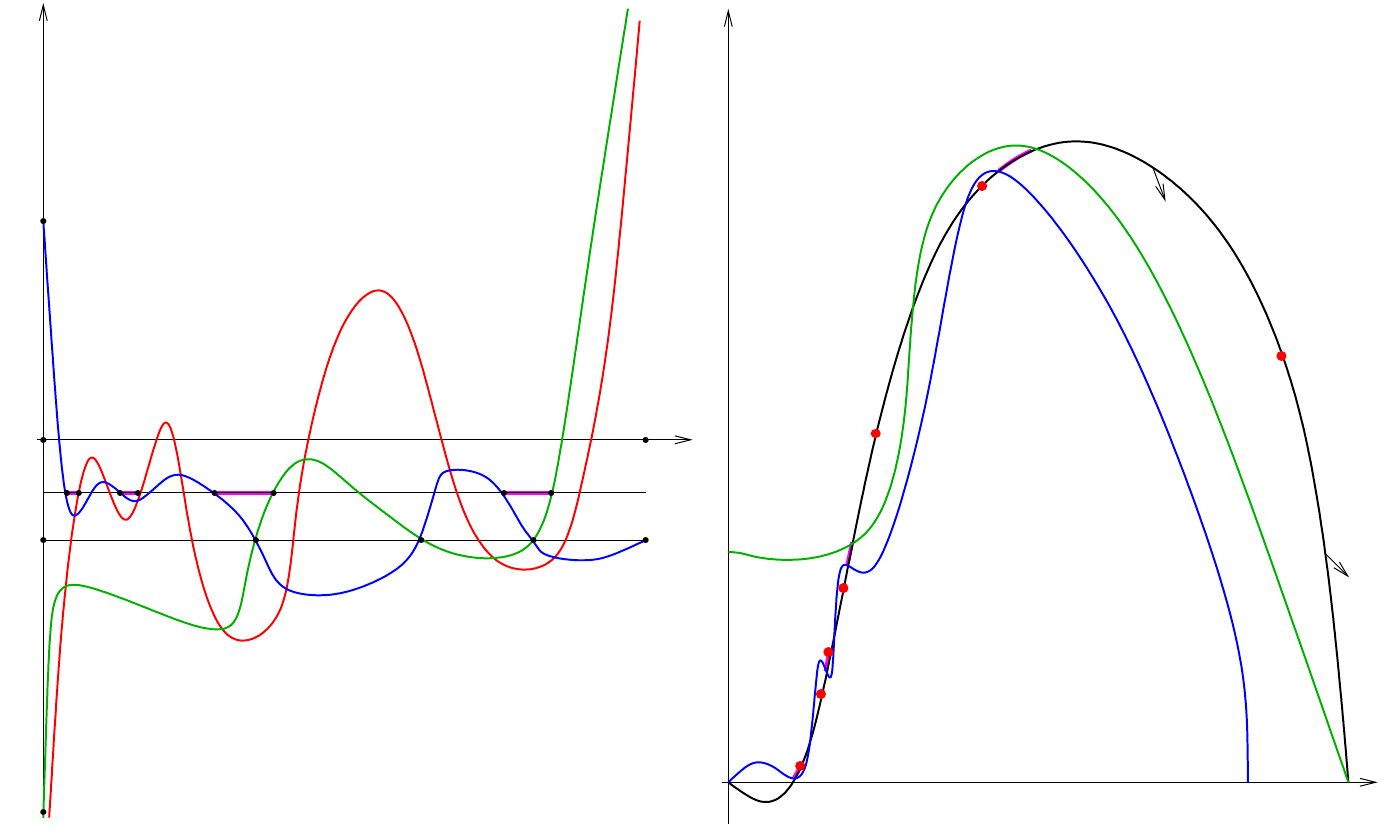}
\caption{A generic situation where Theorem \ref{Theo:transv_generic} holds. On the left the sets $\{F(U) + \beta = 0\}$ (red), $\{\Gamma_u(U,\beta) = P^*(U)\}$ (blue), $\{\Gamma_s(U,\beta) = P^*(U)\}$ (red) are depicted. Fixing $\beta$, the region where $- F(U) > \beta$ is the region where the vector field is exiting the region $\{P \leq P^*\}$, while $- F(U) < \beta$ the vector field is entering, so that the control can be applied here. In order to remain inside $\Gamma_u,\Gamma_s$, the only place where we can apply the control are the magenta segments. On the right the curve $P^*$ and the trajectories $\Gamma_u,\Gamma_s$ are represented in the phase space $(U,P$). The red dots are the zeros of $F(U) + \beta$.}
\label{Fig:transv_generic}
\end{figure}

\begin{remark}
\label{Rem:existence_multiple_inter}
We now show that it is possible to exhibit a source $f$ satisfying condition $(\ref{H1})$ and such that $F(U) + \beta$ has infinitely many oscillation. This proves that the statement of Theorem \ref{Theo:transv_generic} is not empty, i.e. $\mathfrak T$ does not contain all admissible sources $f$.

We start by considering a function $f \in C^{2,1}([0,1])$ defined as
\begin{align*}
f(U) := \begin{cases}
\text{smooth connection} & 0 \leq U < U^*, \\
\frac{a}{U^3}-\frac{a(U_1-U)^3}{(U^*)^3(U_1-U^*)^3}, & U^* \le U < U_1, \\
\frac{a}{U^3} & U_1 \le U < U_2, \\
\frac{a}{U^3}-\frac{a(U-U_2)^3}{(1-U_2)^3} & U_2 \le U \le 1,
\end{cases}
\end{align*}
where $U_1$, $U_2$ are points we will choose later. The main property of this function is that $F(U) = 0$ on the segment $[U_1,U_2]$: we will perturb it in order to achieve countably many oscillations of $F$.

In order to have
\begin{equation*}
f'(U^*) = \frac{3a(2U^* - U_1)}{(U^*)^4(U_1 - U^*)} > 0
\end{equation*}
we must assume $U_1 < 2 U^*$. Moreover, the same condition implies that 
\begin{equation*}
f(U) = \frac{a}{U^3} \bigg( 1 - \frac{U^3 (U_1 - U)^3}{(U^*)^3 (U_1 - U^*)^3} \bigg) > 0 \quad \forall U^* \le U \le U_1.
\end{equation*}
We also note that, for $U^* \le U \le U_1$,
\begin{align*}
F(U) &= \frac{3f(U)+Uf'(U)}{2\sqrt{Uf(U)}} \\
&= \frac{3}{2} \sqrt{\frac{a}{2(U^*)^3(U_1-U^*)^3}} \frac{U(U-U_1)^2(2U-U_1)}{\sqrt{(U^*)^3(U_1-U^*)^3 -U^3(U_1-U)^3}} > 0.
\end{align*}

By the previous choice of $U_1$, the map
\begin{equation*}
[U^*, U_1] \ni U \mapsto \sqrt{(U^*)^3(U_1 - U^*)^3 -U^3(U_1-U)^3}
\end{equation*}
is increasing. Viceversa, if we also assume $U_1<\frac{16}{7+\sqrt{17}} U^*$,
\begin{align*}
\frac{d}{dU} \bigg( U (U - U_1)^2 (2U - U_1) \bigg) = (U - U_1)(8U^2 -7U_1 U + U_1^2) \le 0 \quad \forall U \in [U^*, U_1].
\end{align*}
Both together imply that $F$ is decreasing in $[U^*, U_1]$.

Computing $F(U)$ for $U \in [U_1,U_2]$ we obtain
\begin{equation*}
3 f(U) + U f'(U) = \frac{3 a}{U^3} - \frac{3 a}{U^3} = 0,
\end{equation*}
so that $F(U) = 0$ for all $U \in [U_1, U_2]$.

Finally, in the interval $[U_2,1]$,
\begin{equation*}
F(U) := - \frac{3}{2}\sqrt{\frac{a}{(1-U_2)^3}} \frac{U (U-U_2)^2 (2U - U_2)}{\sqrt{(1 - U_2)^3 - U^3(U - U_2)^3}} \le 0,
\end{equation*}
and it is decreasing. 

We thus conclude that for the unperturbed $f(U)$ the set
\begin{equation*}
J^+ = \big\{ U \in (U^*, 1): \ F(U) > 0 \big\}
\end{equation*}
is an interval, and thus the optimal profile is
\begin{equation*}
P_\beta(U) := \begin{cases}
\Gamma_u(\beta, U) & \Gamma_u(\beta, U)\ge P^*(U), \\
P^*(U) & \Gamma_u(\beta,U) < P^*(U) < \Gamma_s (\beta, U), \\
\Gamma_s(\beta, U) & \Gamma_s(\beta, U)\le P^*(U), \\
\end{cases}
\end{equation*}
by Theorem $3.2$ in \cite{BressanChiri23} or Corollary 4.3 of \cite{bianchini2026travelingprofilescontrolcost}.

\begin{figure}
\resizebox{.65\textwidth}{!}{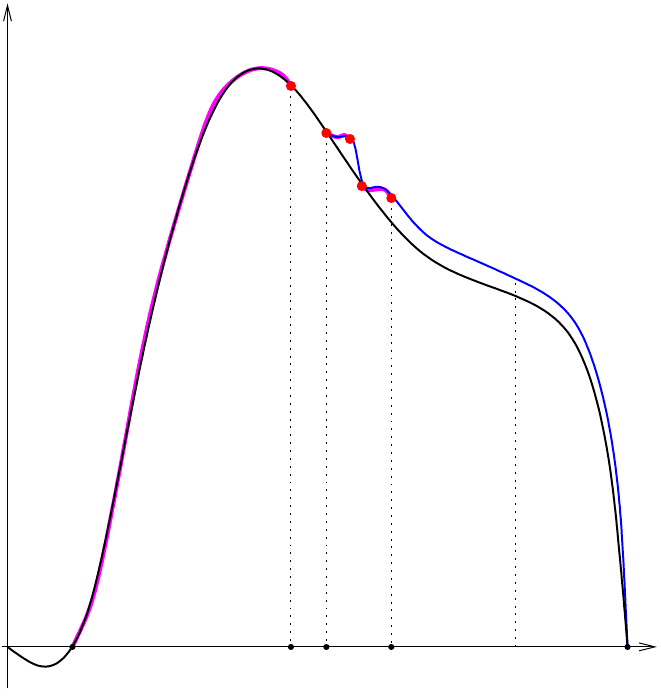}
\caption{The function $f(U)$ and its perturbation $f_\kappa(U)$, Remark \ref{Rem:existence_multiple_inter}. The magenta regions are where $F(U) + \beta > 0$.}
\label{Fig:cubic_hyperbol_perp_1}
\end{figure}

We will now perturb $f(U)$ into countably many disjoint intervals contained in $[U_1,U_2]$ so that $F(U)$ changes sign in each of these intervals. It is clear that we have only to give the perturbation in a single interval $I = (U^-_I,U^+_I) \subset [U_1,U_2]$. Set then
\begin{equation*}
f_\kappa(U) := \begin{cases}
\text{smooth connection} & 0 \leq U < U^*, \\
\frac{a}{U^3} - \frac{a(U_1-U)^3}{(U^*)^3(U_1-U^*)^3}, & \ U^* \le U < U_1, \\
\frac{a}{U^3} & \ U_1 \le U < U^-_I, \\
\frac{a}{U^3} + \kappa \sin \big( \frac{3\pi}{2|I|} (U - U^-) \big)^2 & \ U_I^- \le U < U^+_I, \\
\frac{a + \kappa (U^+_I)^3}{U^3} & \ U^-_I \le U < U^+_I, \\
\frac{a + \kappa (U^+_I)^3}{U^3}-\frac{(a + \kappa (U^+_I)^3)(U-U_2)^3}{(1-U_2)^3} & \ U_I^+ \le U \le 1,
\end{cases}
\end{equation*}
for some $\kappa > 0$ and compute in $I$
\begin{equation*}
\begin{split}
3 f_\kappa(U) + U \frac{d}{dU} f_\kappa(U) = 3 \kappa \sin \bigg( \frac{3\pi}{2|I|} (U - U^-) \bigg)^2 + \frac{3 \pi \kappa}{2|I|} U \sin \bigg( \frac{3\pi}{2I} (U - U^-) \bigg) \cos\bigg( \frac{3\pi}{2|I|} (U - U^-) \bigg).
\end{split}
\end{equation*}
It follows directly that this function is positive near $U^-_I,U^+_I$, while
\begin{equation*}
\begin{split}
3 f_\kappa \bigg( \frac{U^-_I + U^+_I}{2} \bigg) + \frac{U^-_I + U^+_I}{2} \frac{d}{dU} f_\kappa \bigg( \frac{U^- + U^+}{2} \bigg) &= 3 \kappa \sin \bigg( \frac{3\pi}{4} \bigg)^2 - \frac{3 \pi \kappa}{2 |I|} \frac{U^- + U^+}{2} \sin \bigg( \frac{3\pi}{4} \bigg) \cos \bigg( \frac{3\pi}{4} \bigg) \\
&= \frac{3\kappa}{2} \bigg( 1 - \frac{\pi}{2} \frac{U^+ + U^-}{2(U^+ - U^-)} \bigg) < 0.
\end{split}
\end{equation*}
Hence the function has an oscillation.

We observe that the perturbation is not $C^3$, but a slight variation will not alter the sign of $F(U)$. 
%
\end{remark}

\section{Dependence of the manifolds $\Gamma_u$, $\Gamma_s$ with respect to parameters}
\label{Ss:stab_sti_Gamma_us}

In this section, we study the dependence of the unstable manifold $\Gamma_u$ and the stable manifold $\Gamma_s$ with respect to $\beta$ and the source $f$. It is well known that the stable and unstable manifolds depend smoothly on the vector field; our aim is to provide direct proofs of some explicit estimates which will be important in the analysis of the differentiability properties of the Effort function \gls{Ebeta}. A critical case is the heteroclinic connection at $\beta^{**}$, where the dependence of \gls{Gammau} may fail to be smooth.

It is elementary from the ODEs \eqref{Equa:Gammau_eq0}, \eqref{Equa:Gammas_eq0} to see that the functions
$$
\beta,f \mapsto \Gamma_u(U,\beta,f), \ \gls{Uufbeta} = U_{uf}(\beta,f)
$$
are monotonically decreasing, as well as
$$
\beta,f \mapsto \Gamma_s(U,\beta,f)
$$
is monotonically increasing and always defined in $[0,1]$ if $\beta \geq \beta^*(f)$. Recall moreover that \gls{Uubeta} is the first intersection point of \gls{Gammau} and \gls{Pstar} and, similarly, \gls{Usbeta} is the last intersection point of \gls{Gammas} and \gls{Pstar}, see page \pageref{Equa:ODE_contr_PU_intro}.  

For the solutions $P(U)$ we are considering (see Section \ref{Ss:optim_Pbeta}), the intervals of interests of $\Gamma_u,\Gamma_s$ are $[0,\gls{Uubeta}]$, $[\gls{Usbeta},1]$, where $U_u(\beta)$ ($U_s(\beta)$) is the first (last) intersection points of $\Gamma_u(\beta)$ ($\Gamma_s(\beta)$) with the critical curve
\begin{equation*}
\gls{Pstar} = \sqrt{Uf(U)}, \quad U \in [U^*,1].
\end{equation*}
We recall that for $\beta \geq \beta^{**}$ we have $U_u(\beta) = U^*$; observe also that $U_u(\beta) > U^*$ and $\Gamma_u(U_u) > 0$ for $\beta < \beta^{**}$. Moreover, since the derivative of $P^*$ in $U=1$ is $-\infty$, the point $U_s(\beta)$ is well defined. When we need to emphasize the dependence on $f$ we will write $U_u(\beta,f)$, $U_s(\beta,f)$, or for shortness $U_u(f)$, $U_s(f)$ when $\beta$ is kept constant.


We need first the following lemma: it allows to compare the trajectory $\Gamma_u$ with the function $f(U)$ in some range of speeds $\beta$. In particular, one can deduce that $\gls{betastarstar} < \infty$.

\begin{lemma}
\label{Lem:large_beta_Gammau}
Define the constants
\begin{equation}
\label{Equa:def_M0_MU*}
M_0^+ = \max_{[0,U^*]} \bigg\{ \frac{f'(U)}{f'(0)} \bigg\} \geq 1, \quad M_{0}^- = \max_{[0,U^*]} \bigg\{ \frac{- f'(U)}{f'(0)} \bigg\} > 0.
\end{equation}
Then it holds for $U \in [0,U^*]$
\begin{equation}
\label{Equa:lower_bdd_a}
\Gamma_u(U,\beta) \geq \bigg( - \frac{\beta}{2M_0^+} + \sqrt{ \frac{\beta^2}{4 (M_0^+)^2} - \frac{f'(0)}{M_0^+}} \bigg) \frac{f(U)}{f'(0)}, \quad \beta \geq \beta^*.
\end{equation}
Moreover, if
\begin{equation*}
\beta \geq 2 \sqrt{- M_{0}^- f'(0)} = 2 \sqrt{\max_{[0,U^*]} \{f'(U)\}}.
\end{equation*}
then
\begin{equation*}
\Gamma_u(U,\beta) \leq \bigg( \frac{\beta}{2M_0^-} - \sqrt{\frac{\beta^2}{4 (M_0^-)^2} + \frac{f'(0)}{M_0^-}} \bigg) \frac{f(U)}{f'(0)}.
\end{equation*}
\end{lemma}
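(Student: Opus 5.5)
We prove both bounds by comparing $\Gamma_u$ with the curves $P = c\, g(U)$, where $g(U) = f(U)/f'(0)$. Note that $g \geq 0$ on $[0,U^*]$, $g(0) = g(U^*) = 0$, $g'(0) = 1$, and $g'(U) = f'(U)/f'(0)$. Since $f'(0) < 0$, we have $f(U) = -|f'(0)|\, g(U)$. A curve $P = c g$ is a subsolution (respectively supersolution) of the ODE \eqref{Equa:Gammau_eq0} on $(0,U^*)$ when
\begin{equation*}
c g'(U) \leq \ (\text{resp. } \geq) \ \frac{|f'(0)|}{c} - \beta ,
\end{equation*}
where we have divided by $g > 0$ and used $-f/(cg) = |f'(0)|/c$. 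Equivalently, the condition reads $c^2 g'(U) + \beta c + f'(0) \leq 0$ (respectively $\geq 0$).

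\textbf{Lower bound.} Since $g' \leq M_0^+$ and $c > 0$, it is enough to have $M_0^+ c^2 + \beta c + f'(0) \leq 0$. We take $c$ equal to the positive root, which is exactly the coefficient in \eqref{Equa:lower_bdd_a}. This root is always real because $f'(0) < 0$, so $\beta \geq \beta^*$ poses no constraint. Then $c g$ is a subsolution on the whole interval $[0,U^*]$.

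\textbf{Upper bound.} Now $g' \geq -M_0^-$, so the supersolution condition holds if $-M_0^- c^2 + \beta c + f'(0) \geq 0$. This quadratic has real positive roots exactly when $\beta^2 \geq -4 M_0^- f'(0)$, which is the stated threshold. We take $c$ to be the smaller root, which is the coefficient in the statement. Between the two roots the quadratic is $\geq 0$.

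\textbf{Comparison near $U = 0$.} The comparison is initialized at the saddle. As $U \to 0$ we have $\Gamma_u(U)/U \to \lambda_+(0,\beta)$, the positive root of $\lambda^2 + \beta\lambda + f'(0) = 0$, while $c g(U)/U \to c$.
\begin{itemize}
\item For the lower bound, $M_0^+ \geq 1$ gives $c \leq \lambda_+$: the polynomial $M_0^+ c^2 + \beta c + f'(0)$ dominates $\lambda^2 + \beta\lambda + f'(0)$ at positive arguments.
\item For the upper bound, $c \geq \lambda_+$. Indeed, at $c = \lambda_+$ we have $-M_0^- \lambda_+^2 + \beta\lambda_+ + f'(0) = -(1 + M_0^-)\lambda_+^2 < 0$, so $\lambda_+$ lies outside the interval between the roots. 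Since the quadratic is negative at $0$ and at $\lambda_+$, $\lambda_+$ is below the smaller root.
\end{itemize}

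\textbf{Main obstacle.} If $c = \lambda_+$ (for instance when $M_0^+ = 1$), the leading-order comparison is inconclusive. There I would compare instead with $(c \mp \epsilon) g$ and let $\epsilon \to 0$. Strictness of the sub/supersolution inequality then gives the order of curves near $0$, and invariance of the ordering under the scalar ODE $dP/dU = -f/P - \beta$ (valid while $P > 0$) propagates the ordering to all of $(0,U^*)$.

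For the lower bound this also shows that $\Gamma_u > 0$ on $(0,U^*)$, so it stays a graph there. For the upper bound, if $\Gamma_u$ hits zero before $U^*$, the inequality holds trivially afterward, with the convention $\Gamma_u = 0$ beyond $U_{uf}$.

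The delicate point is this first-order tie at the saddle, together with the singularity of $f/P$ at $U = 0$. I expect to handle it via the perturbed constants as above.
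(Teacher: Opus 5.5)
Your proposal follows the paper's proof. Both compare $\Gamma_u$ with the curves $a f(U)/f'(0)$ and use $-M_0^-\le f'/f'(0)\le M_0^+$ to reduce the sub/supersolution property to $M_0^+a^2+\beta a+f'(0)\le 0$, respectively $-M_0^-a^2+\beta a+f'(0)\ge 0$. Both then compare the slope $a$ with $\lambda_+(0,\beta)$ at the saddle, and obtain the lower bound by taking $a<c_+$ and letting $a\nearrow c_+$. Your domination-of-quadratics argument for $c_+\le\lambda_+$ is an equivalent substitute for the paper's monotonicity in $M_0^+$.

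One step in the upper bound is not justified as written; the paper skips it in the same way. Let $h(a)=-M_0^-a^2+\beta a+f'(0)$. Knowing that $h$ is negative at $0$ and at $\lambda_+$ does not put $\lambda_+$ below the smaller root $c_-$. The concave $h$ is negative on both $(-\infty,c_-)$ and $(\bar c,\infty)$, where $\bar c$ is the larger root, so you must also exclude $\lambda_+>\bar c$. One way is to show that $\lambda_+$ lies to the left of the vertex $\beta/(2M_0^-)$. From $\lambda_+^2+\beta\lambda_+=-f'(0)$ and $\lambda_+>0$ you get $\lambda_+<-f'(0)/\beta$. The threshold $\beta^2\ge -4M_0^-f'(0)$ then gives $-f'(0)/\beta\le \beta/(4M_0^-)<\beta/(2M_0^-)$.

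With this, $\lambda_+<c_-$ holds strictly, so the upper bound never has a first-order tie and needs no $\epsilon$-perturbation. That matters, because at $\beta=2\sqrt{-M_0^-f'(0)}$ the root is double, and $(c_-+\epsilon)f/f'(0)$ would not be a supersolution.

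In the lower-bound tie case, the ordering near $U=0$ comes from the strict slope inequality $c_+-\epsilon<\lambda_+$. It does not come from strictness of the differential inequality, as you wrote.
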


\begin{proof}
Consider the function
\begin{equation*}
P_a(U) = a \frac{f(U)}{f'(0)}, \quad a > 0, \ U \in [0,U^*].
\end{equation*}
If
\begin{equation}
\label{Equa:a_subsol}
a \frac{f'(U)}{f'(0)} < - \frac{f'(0)}{a} - \beta,
\end{equation}
then $P_a$ is a subsolution to \eqref{Equa:ODE_nocontr_PU}. Recalling $M_0^+$ given by \eqref{Equa:def_M0_MU*}, the inequality \eqref{Equa:a_subsol} requires that
\begin{equation*}
M_0^+ a^2 + \beta a + f'(0) < 0, \quad 0 < a < - \frac{\beta}{2M_0^+} + \sqrt{ \frac{\beta^2}{4 (M_0^+)^2} - \frac{f'(0)}{M_0^+}}.
\end{equation*}
We need to check that the inequality holds near $U = 0$. Since $M_0^+ \geq 1$, then
\begin{equation*}
- \frac{\beta}{2M_0^+} + \sqrt{ \frac{\beta^2}{4 (M_0^+)^2} - \frac{f'(0)}{M_0^+}} = \frac{\lambda_+(\frac{\beta}{\sqrt{M_0^+}},0)}{\sqrt{M_0^+}} \leq - \frac{\beta}{2} + \sqrt{ \frac{\beta^2}{4} - f'(0)} = \lambda_+(0,\beta)
\end{equation*}
because
\begin{equation*}
\partial_{M_0^+} \bigg( - \frac{\beta}{2M_0^+} + \sqrt{\frac{\beta^2}{4 (M_0^+)^2} - \frac{f'(0)}{M_0^+}} \bigg) < 0, 
\end{equation*}
and we conclude that $\Gamma_u(U) \geq a \frac{f(U)}{f'(0)}$ for all $a < \frac{\lambda_+(\frac{\beta}{\sqrt{M_0^+}},0)}{\sqrt{M_0^+}}$. Letting $a \nearrow \frac{\lambda_+(\frac{\beta}{\sqrt{M_0^+}},0)}{\sqrt{M_0^+}}$ we obtain \eqref{Equa:lower_bdd_a}.

In a similar way, $P(U) = a \frac{f(U)}{f'(0)}$ is a supersolution if
\begin{equation*}
a \frac{f'(U)}{f'(0)} > - \frac{f'(0)}{a} - \beta,
\end{equation*}
which requires
\begin{equation*}
- a^2 M_0^- + \beta a + f'(0) > 0, \quad M_0^- = \max_{[0,U^*]} \bigg\{ \frac{-f'(u)}{f'(0)} \bigg\}.
\end{equation*}
Hence we obtain
\begin{equation*}
\frac{\beta}{2M^-_0} - \sqrt{ \frac{\beta^2}{4 (M^-_0)^2} + \frac{f'(0)}{M^-_0}} < a < \frac{\beta}{2M^-_0} + \sqrt{ \frac{\beta^2}{4 (M^-_0)^2} + \frac{f'(0)}{M^-_0}}.
\end{equation*}
For $\beta > 2 \sqrt{- M^-_0 f'(0)}$ we have
\begin{equation*}
\begin{split}
\lambda_+(0,\beta) &\leq \frac{\beta}{2M^-_0} - \sqrt{ \frac{\beta^2}{4 (M^-_0)^2} + \frac{f'(0)}{M^-_0}}
\end{split}
\end{equation*}
because
\begin{equation*}
- \lambda_+(0,\beta)^2 M_0^- + \lambda_+(0,\beta) \beta + f'(0) = - \lambda_+(0,\beta)^2 (M_0^- + 1) < 0.
\end{equation*}
In particular for $\beta > 2 \sqrt{-M^-_0 f'(0)}$ we have
\begin{equation*}
- \frac{\beta}{2M^+_0} + \sqrt{ \frac{\beta^2}{4 (M^+_0)^2} - \frac{f'(0)}{M^+_0}} \leq \lambda_+(0,\beta) \leq \frac{\beta}{2M^-_0} - \sqrt{ \frac{\beta^2}{4 (M^-_0)^2} + \frac{f'(0)}{M^-_0}}. \qedhere
\end{equation*}
\end{proof}

We can now prove the main theorem of this section, which concern the first order derivative of $\Gamma_u,\Gamma_s$ as stated in Theorem \ref{manifoldbound_intro}.

\begin{theorem}
\label{manifoldbound}
Let $\beta \ge \beta^*$ and $f \in C^2([0,1])$ satisfying hypothesis $(\ref{H1})$.

\begin{enumerate}
\item Derivative w.r.t. $\beta$: it holds
\begin{equation}
\label{Equa:Gammau_beta}
- 1 - \bigg[ \ln \bigg(\frac{U}{U^*} \bigg) \bigg]^+ < \frac{\partial_\beta \Gamma_u(U,\beta)}{U} < 0, \quad U \in (0,U_u(\beta)),
\end{equation}
\begin{equation}
\label{Equa:Gammas_beta}
0 < \frac{\partial_\beta \Gamma_s(U,\beta)}{1 - U} < 1, \quad U \in [U^*,1).
\end{equation}

\item 
Derivative w.r.t. $f$: there are two constants 
\begin{equation*}
\gls{rbar} = \underline{r}(\|f\|_{C^2}), \quad \gls{betaunder} = \underline{\beta}(\|f\|_{C^2}) < \beta^{**}(f)
\end{equation*}
and a constant $\gls{Cfbeta} > 0$ depending only on
\begin{equation*}
\begin{split}
&\|f\|_{C_2}, \quad \max \Big\{ \Gamma_u(U)^{-1}, \beta \in [\beta^*,\underline{\beta}], U \in [\bar r,U_u] \Big\}, \\
&\max \bigg\{ \Gamma_u(U)^{-1}, \beta \in \Big[ \beta^{*},2 \sqrt{\max_{[0,U^*]} f'(U)} \Big], U \in [\underline{r},U_u - \underline{r}] \bigg\},
\end{split}
\end{equation*}
such that if $\Gamma_u(U,f + \epsilon \delta f)$, $\Gamma_s(U,f+\epsilon \delta f)$ are the unstable, stable manifold for the source $f + \epsilon \delta f$ satisfying \eqref{H1} with $\delta f \geq 0$, then 
\begin{equation}
\label{Equa:estimat_f_uGG}
0 \le - \lim_{\epsilon \to 0} \frac{\Gamma_u(U,f+\epsilon \delta f) - \Gamma_u(U,f)}{U} \le C(f) \|\delta f\|_{C^1},
\end{equation}
for all $U \leq U_u(f)$, and
\begin{equation}
\label{Equa:estimat_f_sGG}
0 \le \lim_{\epsilon \to 0} \frac{\Gamma_s(U,f+\epsilon \delta f) - \Gamma_s(U,f)}{1 - U} \le C(f) \|\delta f\|_{C^1},
\end{equation}
for all $U \in (U^*,1)$.
\end{enumerate}
\end{theorem}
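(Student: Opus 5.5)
The plan is to differentiate the ODEs \eqref{Equa:Gammau_eq0}, \eqref{Equa:Gammas_eq0} with respect to the parameter and to treat the resulting linear equations for the variations with comparison arguments. For $\beta$, set $w_u = \partial_\beta \Gamma_u$. It solves
\begin{equation*}
\frac{d w_u}{dU} = \frac{f(U)}{\Gamma_u^2} w_u - 1, \qquad \frac{w_u(U)}{U} \to \partial_\beta \lambda_+(0,\beta) \quad (U \to 0).
\end{equation*}
Here $\partial_\beta\lambda_+(0,\beta) = -\frac{1}{2} - \frac{\beta}{4\sqrt{\beta^2/4 - f'(0)}} \in (-1,0)$. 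The justification of differentiability near the singular point $U=0$ is standard: the unstable manifold is the fixed point of a contraction depending smoothly on $\beta$, so the derivative exists and is tangent to the derivative of the eigenvector.

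The sign $w_u<0$ follows since $w_u=0$ would force $w_u'=-1<0$, so $w_u$ can never cross zero from below. For the lower bound, write $w_u = U z$, which gives
\begin{equation*}
U z' = \Big(\frac{U f}{\Gamma_u^2} - 1\Big) z - 1 .
\end{equation*}
On $[0,U^*]$ we have $f\le 0$, so $\frac{Uf}{\Gamma_u^2}-1\le -1$ and $z$ negative. Then $Uz' \ge -z-1$, which keeps $z>-1$, since at $z=-1$ the right-hand side is $\ge 0$ and the initial value lies in $(-1,0)$. On $[U^*,U_u]$ the term $\frac{Uf}{\Gamma_u^2}z$ is negative. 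I will bound it using $\Gamma_u \ge P^*=\sqrt{Uf}$ for $U\le U_u$, so that $\frac{Uf}{\Gamma_u^2}\le 1$. Hence $Uz'\ge -1$, and integrating from $U^*$ gives $z > -1 - \ln(U/U^*)$.

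For $\Gamma_s$, with $w_s=\partial_\beta\Gamma_s$ and $z=w_s/(1-U)$, the analogous equation is integrated backwards from $U=1$. At $U=1$ we have $z(1) = -\partial_\beta\lambda_-(1,\beta) \in (0,1)$. On $[U^*,1]$, where $f\ge 0$, the coefficient of $w_s$ has the right sign, and the same barrier arguments at $z=0$ and $z=1$ give \eqref{Equa:Gammas_beta}. One also uses $\Gamma_s\ge\beta(1-U)$ from \eqref{lbound} to control $f/\Gamma_s^2$ near $U=1$.

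For the $f$-derivative, let $v$ be the variation of $\Gamma_u$ under $\delta f$. It solves $v' = \frac{f}{\Gamma_u^2} v - \frac{\delta f}{\Gamma_u}$, with $\frac{v}{U}\to\partial_\epsilon\lambda_+ = -\frac{\delta f'(0)}{\lambda_+-\lambda_-}$ at $0$. Monotonicity (the sign $v\le0$, and $\ge0$ for $\Gamma_s$) follows from the same barrier argument, since $\delta f\ge 0$. For the upper bound I split $[0,U_u]$ into three regions.

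\begin{itemize}
\item A neighbourhood $[0,\underline r]$ of $0$. There $\Gamma_u \ge cU$ by Lemma \ref{Lem:large_beta_Gammau}, and $|\delta f|\le\|\delta f\|_{C^1}U$, so the same computation as for $z$ gives $|v|/U \le C\|\delta f\|_{C^1}$.
\item The middle region $[\underline r, U_u-\underline r]$. There $\Gamma_u$ is bounded below by the constants appearing in the statement, so Gronwall gives the bound.
\item The region near $U_u$ when $\beta$ is close to $\beta^{**}$ and $U_u$ approaches $U^*$, so that $\Gamma_u$ becomes small. Here I would again use the sign of the coefficient: $f\le0$ left of $U^*$ makes the homogeneous part contracting, and $\Gamma_u\ge P^*$ on $[U^*,U_u]$ controls the remaining piece as in the $\beta$ case.
\end{itemize}

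The term $\frac{\delta f}{\Gamma_u}$ with $\Gamma_u\to 0$ is the delicate one. I expect to handle it by comparing with $\Gamma_u \gtrsim f/f'(0)$ near $U^*$ from Lemma \ref{Lem:large_beta_Gammau}, valid for $\beta\ge 2\sqrt{\max f'}$, so that $\delta f/\Gamma_u$ stays bounded by $\|\delta f\|_{C^1}$-multiples. For $\beta\le\underline\beta$ one instead uses the lower bound on $\Gamma_u$ on $[\bar r,U_u]$. The estimate for $\Gamma_s$ is analogous and easier, since $\Gamma_s\ge\beta(1-U)$ and no degeneracy occurs. This third region, near $U^*$ as $\beta\to\beta^{**}$, is the main obstacle.
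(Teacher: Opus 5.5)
Your $\beta$-part is correct. It is essentially the paper's argument in differential form: the barriers for $z=\partial_\beta\Gamma_u/U$ and $z=\partial_\beta\Gamma_s/(1-U)$ carry the same information as the explicit representation \eqref{Equa:sol_der_beta_u} and its analogue for $\Gamma_s$. There are two harmless slips:
\begin{itemize}
\item $\partial_\beta\lambda_+(0,\beta)=-\frac12+\frac{\beta}{4\sqrt{\beta^2/4-f'(0)}}$; this still lies in $(-1,0)$.
\item $\Gamma_s\ge\beta(1-U)$ is empty for $\beta\le 0$, which is allowed since $\beta^*$ may be negative. Use $\Gamma_s\ge(1-U)/C$ instead.
\end{itemize}
Your $f$-estimates near $U=0$ and on $[\underline r,U^*-\underline r]$ are also fine. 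The genuine gap is in the region you single out, a left neighbourhood of $U^*$ for $\beta\ge\beta^{**}$ and for $\beta\nearrow\beta^{**}$. The mechanism you propose there fails.

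The forcing $\delta f/\Gamma_u$ is \emph{not} bounded by a multiple of $\|\delta f\|_{C^1}$ near $U^*$.
\begin{itemize}
\item Assumption \eqref{H1} for $f+\epsilon\delta f$ forces $\delta f(0)=\delta f(1)=0$, but not $\delta f(U^*)=0$: the interior zero can move.
\item The lower bound $\Gamma_u\ge a f/f'(0)$ of Lemma \ref{Lem:large_beta_Gammau} vanishes linearly at $U^*$.
\item For $\beta\ge\beta^{**}$ one really has $\Gamma_u(V)\approx c\,(U^*-V)$. For $\beta<\beta^{**}$ close to $\beta^{**}$ the same holds outside a layer of width $\sim\Gamma_u(U^*)$.
\end{itemize}
So the forcing behaves like $\delta f(U^*)/(U^*-V)$, which is not even integrable. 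Any argument resting on a pointwise bound of $\delta f/\Gamma_u$ (Gronwall, or a barrier $z=-K$ with $K\ge\sup\delta f/\Gamma_u$) breaks down.

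The estimate survives only because the damping $f/\Gamma_u^2$ is singular at the same rate. In your barrier language, at $z=-K$ one has
\begin{equation*}
U z'=K\Big(1+\frac{U|f|}{\Gamma_u^2}\Big)-\frac{\delta f}{\Gamma_u},
\end{equation*}
which is nonnegative once $K\ge \delta f\,\Gamma_u/(U|f|)$. What you need is therefore $|f|\gtrsim\Gamma_u$, i.e.\ that $\Gamma_u$ lies below a line through $(U^*,0)$. This is the inequality $-f/\Gamma_u\ge 1/C$ of Step 2.6.

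For $\beta\ge 2\sqrt{\max f'}$, the two-sided bounds of Lemma \ref{Lem:large_beta_Gammau} serve exactly this purpose, not to bound $\delta f/\Gamma_u$. The lower bound makes the forcing $\lesssim 1/|f|$, the upper bound makes the damping $\gtrsim 1/|f|$, and then
\begin{equation*}
\int_0^U \frac{1}{|f(V)|}\,e^{-c\int_V^U dW/|f(W)|}\,dV\le \frac1c .
\end{equation*}

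A second ingredient is also missing. For $\beta<\beta^{**}$ close to $\beta^{**}$, the condition $|f|\gtrsim\Gamma_u$ fails in the thin layer $[\underline U,U_u]$ around $U^*$. There $\Gamma_u\approx\Gamma_u(U^*)\to 0$ and $f$ is too small to damp anything.

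In that layer you need a transit-length estimate:
\begin{itemize}
\item $\Gamma_u$ is decreasing and uniformly Lipschitz near $U^*$, by \eqref{Equa:local_Lip_U**}.
\item Hence the layer has width $O(\Gamma_u(U_u))$, by \eqref{Equa:rel_barU_U*}.
\item Therefore $\int_{\underline U}^{U_u}dV/\Gamma_u(V)=O(1)$.
\end{itemize}
Your use of $\Gamma_u\ge P^*$ on $[U^*,U_u]$ is fine, since $1/P^*$ is integrable at $U^*$.

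Both ingredients are needed to obtain a constant independent of $\beta^{**}-\beta$. When $\delta f(U^*)\neq0$ this contribution is genuinely of order one (cf.\ Proposition \ref{Prop:Lipechit_U_u}), so it cannot be bypassed.
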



\begin{proof}
We divide the proof in several steps. Steps 1.1 and 1.2 concern the dependence of $\Gamma_u$ and $\Gamma_s$ w.r.t. $\beta$, respectively. Steps 2.1 to 2.6 analyze the dependence of $\Gamma_u$ w.r.t. $f$: the most difficult part is for $\beta$ close to the critical value $\beta^{**}$, where the asymptotic behavior of $\Gamma_u(U)$ changes sharply. Finally the analysis of $\Gamma_s$ in Step 3 concludes the proof.

\medskip

\noindent {\it Step 1: dependence w.r.t. $\beta$.} We first study the dependence w.r.t. $\beta$ for $\Gamma_u(\beta)$, $\Gamma_s(\beta)$. 

\smallskip

{\it Step 1.1: analysis of $\Gamma_u(\beta)$.} Recall that the ODE for $\Gamma_u$ is
\begin{equation*}
\dot \Gamma_u = - \frac{f(U)}{\Gamma_u} - \beta, \quad \lim_{U \to 0} \frac{\Gamma_u}{U} = \lambda_+(0,\beta) = - \frac{\beta}{2} + \sqrt{ \frac{\beta^2}{4} - f'(0)}.
\end{equation*}
%
Thus the perturbation $\delta \Gamma_u = \partial_\beta\Gamma_u(U,\beta)$ solves
\begin{equation*}
\dot{\delta \Gamma_u} = \frac{f(U)}{\Gamma_u^2} \delta \Gamma_u - 1, \quad \lim_{U \to 0} \frac{\delta \Gamma_u}{U} = \partial_U \lambda_+(0,\beta) = \frac{\beta}{4 \sqrt{ \frac{\beta^2}{4} - f'(0)}} - \frac{1}{2}.
\end{equation*}
The solution is given by the formula
\begin{equation}
\label{Equa:formula_Gammabeta}
\delta \Gamma_u(U,\beta) = \delta \Gamma_u(\beta,\epsilon) e^{\int_\epsilon^U \frac{f(W)}{\Gamma_u(W,\beta)^2} dW} - \int_\epsilon^U e^{\int_V^U \frac{f(W)}{\Gamma_u(W,\beta)^2} dW} dV,
\end{equation}
and since 
\begin{equation*}
\frac{f(W)}{\Gamma_u(W,\beta)^2} \sim\frac{f'(0)}{\lambda_+(0,\beta)^2 W} \quad \text{as $W \to 0$},
\end{equation*}
we deduce that
\begin{equation}
\label{Equa:sol_der_beta_u}
\delta \Gamma_u(U,\beta) = - \int_0^U e^{\int_V^U \frac{f(W)}{\Gamma_u(W,\beta)^2} dW} dV = - \int_0^U \frac{\Gamma_u(V)}{\Gamma_u(U)} e^{- \int_V^U \frac{\beta}{\Gamma_u(W)} dW} dV.
\end{equation}
Since $f(U) < 0$ in $(0,U^*)$, for $U \in (0,U^*)$ we obtain the estimate
\begin{equation}
\label{Equa:delta_Gamma_u}
0 > \delta \Gamma_u(U,\beta) > - \int_0^{U} 1\, dV = - U. 
\end{equation}
Using
\begin{equation*}
\frac{d}{dU} \frac{\delta \Gamma_u}{U} = \frac{U f(U) - \Gamma_u^2}{U \Gamma_u^2} \frac{\delta \Gamma_u}{U} - \frac{1}{U},
\end{equation*}
we obtain also for $U \in [U^*,U_u(\beta,f)]$
\begin{equation*}
\begin{split}
\frac{\delta \Gamma_u(U)}{U} &= \frac{\delta \Gamma_u(U^*)}{U^*} e^{\int_{U^*}^U \frac{W f(W) - \Gamma_u(W)^2}{W \Gamma_u(W)^2} dW} - \int_{U^*}^U \frac{1}{V} e^{\int_V^U \frac{W f(W) - \Gamma_u(W)^2}{W \Gamma_u(W)^2} dW} dV \\
&\geq - 1 - \ln \bigg( \frac{U}{U^*} \bigg). 
\end{split}
\end{equation*}
being $\Gamma_u(U) \geq P^*(U) = \sqrt{Uf(U)}$ in the interval under consideration. Hence \eqref{Equa:Gammau_beta} is proved.

\smallskip

{\it Step 1.2: analysis of $\Gamma_s(\beta)$.} The analysis for $\Gamma_s(U,\beta)$ is entirely similar. The ODEs are the same, with limits
\begin{equation*}
\lim_{U \to 1} \frac{\Gamma_s}{1 - U} = - \lambda_-(1,\beta), \quad \lim_{U \to 1} \frac{\delta \Gamma_s}{1 - U} = - \partial_\beta \lambda_-(1,\beta).
\end{equation*}
The formula for the solution is
\begin{equation*}
\delta \Gamma_s(U,\beta) = - \int_1^U e^{\int_V^U \frac{f(W)}{\Gamma_s(W,\beta)^2} dW} dV = \int_U^1 e^{- \int_U^V \frac{f(W)}{\Gamma_s(W,\beta)^2} dW} dV,
\end{equation*}
and since $f(U) > 0$ in $(U^*,1)$ we obtain
\begin{equation*}
0 < \delta \Gamma_s(U,\beta) \leq 1 - U, 
\end{equation*}
which is \eqref{Equa:Gammas_beta}.

\medskip

\noindent{\it Step 2: $\Gamma_u$ dependence w.r.t. $f$.} 
The equation for the perturbation $\delta \Gamma_u(U,\beta)$ of $\Gamma_u(U,\beta)$ when $f \to f + \delta f$ satisfies
\begin{equation*}
\frac{d}{dU} \delta \Gamma_u = \frac{f(U)}{\Gamma_u^2} \delta \Gamma_u - \frac{\delta f(U)}{\Gamma_u}, \quad \lim_{U \to 0} \frac{\delta \Gamma_u(U,\beta)}{U} = \partial_{f'(0)} \lambda_+(0,\beta).
\end{equation*}
As in the previous step, noticing that
\begin{equation*}
\frac{f(U)}{\Gamma_u^2} \underset{U \to 0}{\sim} \frac{f'(0)}{\lambda_+(0,\beta)^2 U},
\end{equation*}
we obtain the representation
\begin{equation}
\label{Equa:deltaGamma_uu}
\delta \Gamma_u(U) = - \int_0^U \frac{\delta f(V)}{\Gamma_u(V)} e^{\int_V^U \frac{f(W)}{\Gamma_u(W)^2} dW} dV.
\end{equation}
Using the ODE \eqref{Equa:Gammau_eq0} for $\Gamma_u(U,\beta)$ we can write
\begin{equation*}
\begin{split}
\int_V^U \frac{f(W)}{\Gamma_u(W)^2} dW &= \int_V^U \bigg( - \frac{1}{\Gamma_u(W)} \frac{d\Gamma_u}{dW} - \frac{\beta}{\Gamma_u(W)} \bigg) dW \\
&= \ln \bigg( \frac{\Gamma_u(V)}{\Gamma_u(U)} \bigg) - \int_V^U \frac{\beta}{\Gamma_u(W)} dW,
\end{split}
\end{equation*}
so that we also obtain
\begin{equation}
\label{Equa:deltaGamma_u}
\frac{\delta \Gamma_u(U)}{U} = - \int_0^U \frac{\delta f(V)}{U \Gamma_u(U)} e^{- \int_V^U \frac{\beta}{\Gamma_u(W)} dW} dV.
\end{equation}

In both cases $\delta \Gamma_u \leq 0$, so that the sign in \eqref{Equa:estimat_f_uGG} is correct.

\smallskip

{\it Step 2.1: estimates for $\beta > 2\sqrt{\max_{[0,U^*]} f'(U)}$.} 
Using Lemma \ref{Lem:large_beta_Gammau}, for $\beta \geq 2 \sqrt{\max_{[0,U^*]} f'(U)}$ we estimate \eqref{Equa:deltaGamma_uu} as 
\begin{equation*}
\begin{split}
\bigg| \frac{\delta \Gamma_u(U)}{U} \bigg| &\leq \frac{\|\delta f\|_{C^0([0,U])}}{U} \int_0^U \frac{f'(0)}{\Big( - \frac{\beta}{2M_0^+} + \sqrt{\frac{\beta^2}{4 (M_0^+)^2} - \frac{f'(0)}{M_0^+}} \Big) f(V)} e^{\int_V^U \frac{f'(0)^2}{\Big( \frac{\beta}{2M_0^-} - \sqrt{\frac{\beta^2}{4 (M_0^-)^2} + \frac{f'(0)}{M_0^-}} \Big)^2 f(W)} dW} dV \\
&= \frac{\|\delta f\|_{C^0([0,U])}}{U} \frac{\Big( \frac{\beta}{2M_0^-} - \sqrt{\frac{\beta^2}{4 (M_0^-)^2} + \frac{f'(0)}{M_0^-}} \Big)^2}{\Big( - \frac{\beta}{2M_0^+} + \sqrt{\frac{\beta^2}{4 (M_0^+)^2} - \frac{f'(0)}{M_0^+}} \Big) f'(0)} \\
& \quad \quad \cdot \int_0^U \frac{f'(0)^2}{\Big( \frac{\beta}{2M_0^-} - \sqrt{\frac{\beta^2}{4 (M_0^-)^2} + \frac{f'(0)}{M_0^-}} \Big)^2 f(V)} e^{\int_V^U \frac{f'(0)^2}{\Big( \frac{\beta}{2M_{U^*}} - \sqrt{\frac{\beta^2}{4 M_{U^*}^2} - \frac{f'(U^*)}{M_{U^*}}} \Big)^2 f(W)} dW} dV \\
&= \frac{\|\delta f\|_{C^0([0,U])}}{U} \frac{\Big( \frac{\beta}{2M_0^-} - \sqrt{\frac{\beta^2}{4 (M_0^-)^2} + \frac{f'(0)}{M_0^-}} \Big)^2}{\Big( - \frac{\beta}{2M_0^+} + \sqrt{\frac{\beta^2}{4 (M_0^+)^2} - \frac{f'(0)}{M_0^+}} \Big) f'(0)} \cdot \left. - e^{\int_V^U \frac{f'(U^*)^2}{\Big( \frac{\beta}{2M_0^-} - \sqrt{\frac{\beta^2}{4 (M_0^-)^2} + \frac{f'(0)}{M_0^-}} \Big)^2 f(W)} dW} \right|_0^U \\
&\leq C \frac{\|\delta f\|_{C^1}}{\beta}, 
\end{split}
\end{equation*}
where the constant $C$ depends only on $f'(0),M_0^-,M_0^+$, and we observed that as $\beta \to \infty$
\begin{equation*}
- \frac{\beta}{2M_0^+} + \sqrt{\frac{\beta^2}{4 (M_0^+)^2} - \frac{f'(0)}{M_0^+}}, \frac{\beta}{2M_0^-} - \sqrt{\frac{\beta^2}{4 (M_0^-)^2} + \frac{f'(0)}{M_0^-}} \sim \frac{1}{\beta}.
\end{equation*}

In subsequent points we need only to consider the case for $\beta$ bounded, i.e. $\beta^* \leq \beta \leq 2 \sqrt{\displaystyle{\max_{[0,U^*]} f'(U)}}$. 

\smallskip

{\it Step 2.2: estimates for $U$ close to $0$.} By the stability of the unstable manifold \cite[Theorem 6.2.3]{katokhass}, there is a neighborhood $U = 0$ such that
\begin{equation}
\label{Equa:expans_U0}
\big| \Gamma_u(U,\beta) - \lambda_+(0,\beta) U \big| \leq \mathcal O(\|f\|_{C^2}) U^2,
\end{equation}
and the radius $\gls{rbar} = r(\|f\|_{C^2})$ depends only on the $C^2$-norm of $f$ for $\beta \in [\beta^*,2 \sqrt{\displaystyle{\max_{[0,U^*]} f'(U)}}]$. 
By \eqref{Equa:expans_U0}
\begin{equation*}
\Gamma_u(U,\beta) \ge \lambda_+(0,\beta)U-\mathcal{O}(\|f\|_{C^2}) U^2, \quad \forall U\in [0, \underline{r}],
\end{equation*}
so that for all $0\le V\le U \le \underline{r}$
\begin{align*}
\int_{V}^U \frac{1}{\Gamma_u(W)} \, dW &\le \int_{V}^U \frac{1}{\lambda_+(0,\beta) W - \mathcal O(\|f\|_{C^2}) W^2} \\
&= \frac{1}{\lambda_+(0,\beta)} \ln \bigg( \frac{U}{V} \bigg) + \frac{1}{\lambda_+(0,\beta)} \ln \bigg( \frac{1 - \mathcal O(\|f\|_{C^2}) V}{1 - \mathcal O(\|f\|_{C^2}) U} \bigg),
\end{align*}
which implies, for $\beta <0$,
\begin{align*}
e^{-\int_V^U \frac{\beta}{\Gamma_u(W)} \, dW} \le C(\|f\|_{C^2}) \bigg( \frac{V}{U} \bigg)^{\frac{\beta}{\lambda_+(0,\beta)}}.
\end{align*}
Substituting into \eqref{Equa:deltaGamma_u} and observing that
\begin{equation}
\label{Equa:lambda_+0_beta}
- \lambda_+(0,\beta) < \beta,
\end{equation}
we obtain (the constant $C(\|f\|_2)$ may vary line by line)
\begin{equation*}
\begin{split}
\bigg| \frac{\delta \Gamma_u(U)}{U} \bigg| &\le \frac{C(\|f\|_{C^2}) \|\delta f\|_{C^0([0,U])}}{\Gamma_u(U) U} \int_0^U \bigg( \frac{V}{U} \bigg)^{\frac{\beta}{\lambda_+(0,\beta)}} \, dV \\
&= \frac{C(\|f\|_{C^2}) \|\delta f\|_{C^0([0,U])}}{\beta + \lambda_+(0,\beta)} \frac{\lambda_+(0,\beta)}{\Gamma_u(U)} \\
&\leq \frac{C(\|f\|_{C^2})}{\frac{\beta}{2} + \sqrt{\frac{\beta^2}{4} - f'(0)}} \frac{\|\delta f\|_{C^0([0,U])}}{U} \leq \frac{C(\|f\|_{C^2})}{\frac{\beta^*}{2} + \sqrt{\frac{\beta^*}{4} - f'(0)}} \|\delta f\|_{C^{1}}. 
\end{split}
\end{equation*}

The case $\beta\geq0$ is simpler as there holds
\begin{align*}
e^{-\int_V^U \frac{\beta}{\Gamma_u(W)} \, dW} \le 1 \quad \forall 0\leq V\leq U \leq \underline{r},
\end{align*}
and, thus, 
\begin{equation*}
\begin{split}
\bigg| \frac{\delta \Gamma_u(U)}{U} \bigg| &\le \frac{\|\delta f\|_{C^0([0,U])}}{\Gamma_u(U)}\\
&\le \frac{1}{\lambda_+(0,\beta)-\mathcal{O}(\|f\|_{C^2} )}\frac{\|\delta f\|_{C^0([0,U])}}{U}\\
&\le C(\|f\|_{C^2}) \|\delta f\|_{C^{1}}.
\end{split}
\end{equation*}

In both cases
\begin{equation*}
\bigg| \frac{\delta \Gamma_u(U)}{U} \bigg| \le C(\|f\|_{C^2}) \|\delta f\|_{C^{1}}.
\end{equation*}

%

\smallskip

{\it Step 2.3: estimates for $\beta^* \leq \beta < \beta^{**}$ and $U \geq \underline{r}$.} In this case, by \eqref{Equa:expans_U0} we can choose $a > 0$ such that
\begin{equation*}
\Gamma_u(U,\beta) \geq 
\begin{cases}
(\lambda_+(0,\beta) - a) U & 0 \leq U < \underline{r}, \\
\displaystyle{\min_{[\underline{r},U_u(\beta)]}} \{\Gamma_u\} & \underline{r} \leq U \leq U_u(\beta),
\end{cases} \quad \text{with} \ \frac{-\beta}{\lambda_+(0,\beta) - a} < 1.
\end{equation*}
Indeed, for $U < \underline{r}$ we can use the previous point and \eqref{Equa:lambda_+0_beta}, while $\Gamma_u$ is uniformly positive for $U \geq \underline{r}$ and $\beta$ bounded. 
%

Hence, for $\beta < 0$ we can estimate the integral in the exponential in \eqref{Equa:deltaGamma_uu} as
\begin{equation*}
\begin{split}
\int_V^U \frac{-\beta}{\Gamma_u(W)} dW \leq \frac{-\beta}{\displaystyle{\min_{[\underline{r},U_u(\beta)]} \{\Gamma_u\}}} + \frac{-\beta}{\lambda_+(0,\beta) - a} \ln \bigg( \frac{\underline{r}}{V} \bigg) \ind_{[0,\underline{r})}(V).
\end{split}
\end{equation*}
Using the above estimate we obtain that
\begin{equation*}
\begin{split}
\bigg| \frac{\delta \Gamma_u(U)}{U} \bigg| &\leq \frac{\|\delta f\|_{C^0([0,U])}}{U \Gamma_u(U)} \int_0^U e^{- \int_V^U \frac{\beta}{\Gamma_u(W)} dW} dV \\
&\leq \frac{\|\delta f\|_{C^0([0,U])}}{U \displaystyle{\min_{[\underline{r},U_u(\beta)]} \{\Gamma_u\}}} e^{\frac{-\beta}{\displaystyle{\min_{[\underline{r},U_u(\beta)]} \{\Gamma_u\}}}} \bigg[ (U - \underline{r}) +  \int_0^{\underline{r}} \bigg( \frac{\underline{r}}{V} \bigg)^{\frac{-\beta}{\lambda_+(0,\beta) - a}} dV \bigg] \\
&= \frac{\|\delta f\|_{C^0([0,U])}}{U \displaystyle{\min_{[\underline{r},U_u(\beta)]} \{\Gamma_u\}}} e^{\frac{-\beta}{\displaystyle{\min_{[\underline{r},U_u(\beta)]} \{\Gamma_u\}}}} \bigg[ (U - \underline{r}) + \frac{\underline{r}}{1 + \frac{\beta}{\lambda_+(0,\beta) - a}} \bigg].
\end{split}
\end{equation*}

For $0 \leq \beta < \beta^{**}$ we can simply observe that
\begin{equation*}
e^{\int_V^U \frac{-\beta}{\Gamma_u(W)} dW} \leq 1,
\end{equation*}
and then
\begin{equation*}
\begin{split}
\bigg| \frac{\delta \Gamma_u(U)}{U} \bigg| &\leq \frac{\|\delta f\|_{C^0([0,U])}}{U \Gamma_u(U)} \int_0^U e^{- \int_V^U \frac{\beta}{\Gamma_u(W)} dW} dV \leq \frac{\|\delta f\|_{C^0([0,U])}}{\displaystyle{\min_{[\underline{r},U_u(\beta)]} \{\Gamma_u\}}}. \\ 
\end{split}
\end{equation*}

Hence we conclude that 
\begin{equation*}
\bigg| \frac{\delta \Gamma_u(U)}{U} \bigg| \leq C \Big( \max_{[\underline{r},U_u(\beta)]} \{\Gamma_u^{-1}\} \Big) \|\delta f\|_{C_0}.
\end{equation*}
The quantity $\displaystyle{\min_{[r(\|f\|_{C^2}),U_u(\beta)]} \Gamma_u}$ is uniformly bounded when we will fix the maximal speed $\underline{\beta} < \beta^{**}$, see Step 2.6 below.

\smallskip

{\it Step 2.4: estimates for $\beta^{**}  < \beta < 2 \sqrt{\max_{[0,U^*]} f'(U)}$ and $U$ close to $U^*$.} 
%
%
For $\beta > \beta^{**}$ we use the fact that at $\beta^{**}$ we are on the strongly stable manifold corresponding to $\lambda_-(U^*,\beta)$ and for $\beta > \beta^{**}$ we approach along the eigenvector corresponding to $\lambda_+(U^*,\beta)$, so that by \cite[Theorem 6.2.8]{katokhass} 
\begin{equation*}
- \lambda_+(U^*,\beta) (U^* - U) - \mathcal O(\|f\|_{C^2}) (U^* - U)^2 \leq \Gamma_u(U) \leq - \lambda_-(U^*,\beta) (U^* - U) + \mathcal O(\|f\|_{C^2}) (U^* - U)^2
\end{equation*}
for $U^* - \gls{rbar}(\|f\|_{C^2}) \leq U \leq U^*$, i.e. $\Gamma_u$ lies in between the invariant manifold corresponding to $\lambda_-(U^*,\beta)$ and the one corresponding to $\lambda_+(U^*,\beta)$. Then estimate the exponential in \eqref{Equa:deltaGamma_u} as
\begin{equation*}
\begin{split}
- \int_V^U \frac{\beta}{\Gamma_u(W)} dW &\leq - \int_{\max\{V,U^* - \underline{r}\}}^{U} \frac{\beta}{\Gamma_u(W)} dV \\
&\leq \mathcal O(\|f\|_{C^2}) + \frac{\beta}{\lambda_-(U^*,\beta)} \ln \bigg( \frac{\min\{U^* - V,\underline{r}\}}{U^* - U} \bigg).
\end{split}
\end{equation*}
Hence
\begin{equation*}
\begin{split}
e^{- \int_V^U \frac{\beta}{\Gamma_u(W)} dW} 
= C(\|f\|_{C^2}) \bigg( \frac{\min\{U^*-V,\underline{r}\}}{U^*-U} \bigg)^\frac{\beta}{\lambda_-(U^*,\beta)}.
\end{split}
\end{equation*}
Substituting into \eqref{Equa:deltaGamma_u} we get for $0 < U^* - U < \underline r(\|f\|_{C^2})$
\begin{equation*}
\begin{split}
\bigg| \frac{\delta \Gamma_u}{U} \bigg| &= \frac{C(\|f\|_{C^2}) \|\delta f\|_{C^0([0,U^*]}}{U \Gamma_u(U)} \int_0^U \bigg( \frac{\min\{U^* - V,\underline{r}\}}{U^* - U} \bigg)^\frac{\beta}{\lambda_-(U^*,\beta)} dV \\
&= \frac{C(\|f\|_{C^2}) \|\delta f\|_{C^0([0,U^*]}}{U (U^* - U)} \int_0^{U^* - \underline{r}} \bigg( \frac{U^*-U}{\underline{r}} \bigg)^\frac{\beta}{-\lambda_-(U^*,\beta)} dV \\
&\quad + \frac{C(\|f\|_{C^2}) \|\delta f\|_{C^0([0,U^*]}}{U (U^* - U)} \int_{U^* - \underline{r}}^U \bigg( \frac{U^*-V}{U^*-U} \bigg)^\frac{\beta}{\lambda_-(U^*,\beta)} dV \\
&\leq \frac{C(\|f\|_{C^2}) \|\delta f\|_{C^0([0,U^*]}}{U} \frac{U^* - \underline{r}}{\underline{r}^\frac{\beta}{- \lambda_-(U^*,\beta)}} (U^* - U)^{- 1 + \frac{\beta}{- \lambda_-(U^*,\beta)}} \\
& \quad + \frac{C(\|f\|_{C^2}) \|\delta f\|_{C^0([0,U^*]}}{U} \frac{(U^* - U)^{- 1 + \frac{\beta}{-\lambda_-(U^*,\beta)}}}{- 1 + \frac{\beta}{-\lambda_-(U^*,\beta)}} \Big( (U^* - U)^{1 + \frac{\beta}{\lambda_-(U^*,\beta)}} - \underline{r}^{1 + \frac{\beta}{\lambda_-(U^*,\beta)}} \Big).
\end{split}
\end{equation*}
We have used the fact that $\beta > - \lambda_-(U^*,\beta)$. Again by
$$
-1 - \frac{\beta}{\lambda_-(U^*,\beta)} > 0,
$$
it follows that the above term is uniformly bounded for $0 < U^* - U < \underline{r}$.

\smallskip

{\it Step 2.5: estimates for $0 \leq \beta \leq 2 \sqrt{\max_{[0,U^*]} f'(U)}$ and $\underline{r} \leq U \leq U^* - \underline{r}$.} In this case we have simply
\begin{equation*}
\bigg| \frac{\delta \Gamma_u}{U} \bigg| \leq \frac{\|\delta f\|_{C^0([0,U^*]}}{\Gamma_u(U,\beta)} \leq C \Big( \max_{[\underline{r},U^* - \underline{r}]} \{\Gamma_u^{-1}\} \Big) \|\delta f\|_{C^0([0,U^*])}.
\end{equation*}
\smallskip

{\it Step 2.6: estimates for $\beta$ close to $\beta^{**}$ and $U > U^* - \underline{r}(\|f\|_{C^2})$.} Choose $0 < \underline{\beta} < \beta^{**}$ sufficiently close to $\beta^{**}$ such that for $\underline{\beta} < \beta < \beta^{**}$ the point $U_u(\beta)$ where $\Gamma_u$ intersects $P^*$ satisfies
$$
\big| (U_u(\beta)-U^*,\Gamma_u(U^*)) \big| \leq \gls{rbar}. 
$$
Using \eqref{Equa:delta_Gamma_u} for $U  = U^* - \underline{r}$, it is sufficient to require that 
\begin{equation*} 
(- \lambda_-(\beta^{**},U^*) + C(\|f\|_{C^2}) \underline{r}) \underline{r} + \beta^{**} - \underline{\beta} < \underline{r}, \quad \beta^{**} - \underline{\beta} = \mathcal O(\underline{r}) = \mathcal O(\|f\|_{C^2}).
\end{equation*}
Taking a smaller $\underline{r}$ if necessary, from the ODE \eqref{Equa:Gammau_eq0} it follows that $\Gamma_u$ is decreasing because $\Gamma_u$ is above the manifold corresponding to the eigenvalues $\lambda_-(U^*,\beta)$ (see \eqref{Equa:eigenv_equi}),
\begin{equation*}
\Gamma_u(U) \geq \big( - \lambda_-(U^*,\beta) - C(\|f\|_{C^2})(U^* - U) \big) (U^* - U),
\end{equation*}
and for $U - \underline{r} \leq U < U^*$
\begin{equation*}
\begin{split}
- \frac{f(U)}{\Gamma_u} - \beta &\leq \frac{-f(U)}{\big( - \lambda_-(U^*,\beta) - C(\|f\|_{C^2})(U^* - U) \big) (U^* - U)} - \beta \\
&\leq - \frac{f'(U^*)}{\lambda_-(U^*,\beta)} - \beta + C(\|f\|_{C^2}) \underline{r} \\
&= \lambda_-(U^*,\beta) + C(\|f\|_{C^2}) \underline{r} < 0.
\end{split}
\end{equation*}
Moreover
\begin{equation}
\label{Equa:local_Lip_U**}
\frac{d\Gamma_u}{dU} \geq - \frac{f(U_u)}{\Gamma_u(U_u)} - \beta = - \sqrt{\frac{f(U_u)}{U_u}} - \beta > - \beta - \mathcal O(\underline{r}^{\frac{1}{2}}),
\end{equation}
which means that it is uniformly Lipschitz in the region $U < U_u(\beta)$. 
We also observe that by the bound \eqref{Equa:local_Lip_U**} 
the trajectories are crossing the curve $- (\beta + \frac{1}{C(\|f\|_{C^2})}) (U - U^*)$ in a point \newglossaryentry{Uunder}{name=\ensuremath{\underline{U}},description={intersection of $\Gamma_u$ with the line $(\beta + \frac{1}{C(\|f\|_{C^2})}) (U^* - U)$}} $(\gls{Uunder},\Gamma_u(\gls{Uunder}))$ such that
\begin{equation}
\label{Equa:rel_barU_U*}
\lim_{U_u \to U^*} \frac{U_u - \underline{U}}{\Gamma_u(U_u)} = C(\|f\|_{C^2}). 
\end{equation}
%
We can thus estimate $\Gamma_u(V)$ from above as
\begin{equation*}
\Gamma_u(V) \leq 
\Gamma_u(U) - (\beta + \mathcal O(\underline{r}^{\frac{1}{2}})) (V-U), \quad V \leq U. 
\end{equation*}

Using \eqref{Equa:deltaGamma_uu}, the quantity we have to estimate is
\begin{equation*}
\begin{split}
\bigg| \int_{0}^{U} \frac{\delta f(V)}{U \Gamma_u(V)} e^{\int_V^U \frac{f(W)}{\Gamma_u(W)^2} dW} dV \bigg| &\leq \bigg| \int_{0}^{U^* - \underline{r}} \frac{\delta f(V)}{U \Gamma_u(V)} e^{\int_V^U \frac{f(W)}{\Gamma_u(W)^2} dW} dV \bigg| + \frac{\|\delta f\|_{C_0}}{U} \int_{U^* - \underline{r}}^U \frac{e^{\int_V^U \frac{f(W)}{\Gamma_u(W)^2} dW}}{\Gamma_u(V)} dV \\
&= \bigg| \frac{\delta \Gamma_u(U^* - \underline{r})}{U} \bigg| e^{\int_{U^*-\underline{r}}^U \frac{f(W)}{\Gamma_u(W)^2} dW} + \frac{\|\delta f\|_{C_0}}{U} \int_{U^* - \underline{r}}^U \frac{e^{\int_V^U \frac{f(W)}{\Gamma_u(W)^2} dW}}{\Gamma_u(V)} dV.
\end{split}
\end{equation*}
For $W\le U\le U_u(\beta)$
\begin{equation}
\label{Equa:min_ration_22}
\Gamma_u(\beta, W)^2 \ge f(W)W, 
\end{equation}
so that
\begin{align*}
\bigg| \frac{\delta \Gamma_u(U^* - \underline{r})}{U} \bigg| e^{\int_{U^*-\underline{r}}^U \frac{f(W)}{\Gamma_u(W)^2} dW} &\leq \bigg| \frac{\delta \Gamma_u(U^* - \underline{r})}{U} \bigg| e^{\int_{U^*}^U \frac{f(W)}{\Gamma_u(W)^2} dW} \\
&\leq \bigg| \frac{\delta \Gamma_u(U^* - \underline{r})}{U} \bigg| e^{\int_{U^*}^U \frac{1}{W} dW} \leq \bigg| \frac{\delta \Gamma_u(U^* - \underline{r})}{U^* - \underline{r}} \bigg|.
\end{align*}
The quantity $\frac{\delta \Gamma_u(U^* - \underline{r})}{U^*-\underline{r}}$ is already estimated in Step 2.5.

We are left with
\begin{equation}
\label{Equa:estim_last_stab}
\int_{U^* - \underline{r}}^U \frac{e^{\int_V^U \frac{f(W)}{\Gamma_u(W)^2} dW}}{\Gamma_u(V)} dV.
\end{equation}
Assume first that $U^* - \underline{r} < U \leq \underline{U}$: then, also $U^* - \underline{r} \leq W \leq \underline{U}$ and 
\begin{equation*}
\begin{split}
\frac{- f(W)}{\Gamma_u(W)} &\geq \frac{(f'(U^*) - C(\|f\|_{C^2}) \underline{r}) (U^* - W)}{\Gamma_u(\underline{U}) + (\beta + \frac{\underline{r}}{C(\|f\|_{C^2})}) (\underline{U} - W)} \\
&= \frac{(f'(U^*) - C(\|f\|_{C^2}) \underline{r}) (U^* - W)}{(\beta + \frac{\underline{r}}{C(\|f\|_{C^2})}) (U^* - \underline{U}) + (\beta + \frac{\underline{r}}{C(\|f\|_{C^2})}) (\underline{U} - W)} \\
&= \frac{(f'(U^*) - C(\|f\|_{C^2}) \underline{r}) (U^* - W)}{(\beta + \frac{\underline{r}}{C(\|f\|_{C^2})})(U^*-W)} \\
&\geq \frac{f'(U^*)}{\beta} - C(\|f\|_{C^2}) \underline{r} \geq \frac{1}{C(\|f\|_{C^2})}.
\end{split}
\end{equation*}
Then, observing that $f < 0$ in this region, \eqref{Equa:estim_last_stab} becomes for $U^* - \underline{r} \leq U \leq \underline{U}$
\begin{equation*}
\begin{split}
\int_{U^* - \underline{r}}^{U} \frac{e^{\int_V^U \frac{f(W)}{\Gamma_u(W)^2} dW}}{\Gamma_u(V)} dV &\leq \int_{U^* - \underline{r}}^U \frac{e^{- \int_V^U \frac{1}{C(\|f\|_{C^2})) \Gamma_u(W)} dW}}{\Gamma_u(V)} dV \leq C(\|f\|_{C^2}). 
\end{split}
\end{equation*}

If $\underline{U} \leq U \leq U_u$, then using \eqref{Equa:min_ration_22}
we obtain
\begin{equation*}
\begin{split}
\int_{\underline{U}}^U \frac{1}{\Gamma_u(V)} e^{\int_V^U \frac{f(W)}{\Gamma_u(W)^2} dW} dV \leq \frac{\underline{U} - U_u}{\Gamma_u(U_u)} \frac{U}{U^*} \underset{(\ref{Equa:rel_barU_U*})}{\leq} C(\|f\|_{C^2}).
\end{split}
\end{equation*}

We thus conclude that there exists $\underline{\beta} = \underline{\beta}(\|f\|_{C^2})$ such that for $\underline{\beta} < \beta < \beta^{**}$ and $U^* - \underline{r} < U < U_u$ we have
\begin{equation*}
\bigg| \frac{\delta \Gamma}{U} \bigg| \leq C \Big( \|f\|_{C^2},\max_{[\underline{r},U^* - \underline{r}], \beta \leq \beta^{**}} \{\Gamma_u^{-1}\} \Big) \|\delta f\|_{C^1([0,U^*])}.
\end{equation*}

\smallskip

{\it Final estimate for $\Gamma_u$.} There exists $\underline{r} = \underline{r}(\|f\|_{C^2})$, $\underline{\beta}(\|f\|_{C^2})$ such that for $U \in (0,U_u)$
\begin{equation*}
\bigg| \frac{\delta \Gamma_u}{U} \bigg| \leq C(f) \|\delta f\|_{C^{1}}, 
\end{equation*}
with the constant $C(f)$ depending on
\begin{equation*}
\|f\|_{C_2}, \ \max \Big\{ \Gamma_u(U)^{-1}, \beta \in [\beta^*,\underline{\beta}], U \in [\underline{r},U_u] \Big\}, \ \max \Big\{ \Gamma_u(U)^{-1}, \beta \in \Big[ \beta^{*},2 \sqrt{\max_{[0,U^*]} f'(U)} \Big], U \in [\underline{r},U_u - \underline{r}] \Big\}.
\end{equation*}
Thus, recalling that $\delta \Gamma_u \leq 0$, we have proved \eqref{Equa:estimat_f_uGG}.

\smallskip

\noindent{\it Step 3: study of the perturbation of $\Gamma_s$.} The analysis of the dependence of $\Gamma_s$ w.r.t. the source $f$ follows the same lines, with the simplification that $\Gamma_s(U)$ is positive away from $1$ and decreasing as $\lambda_-(1,\beta) (U-1)$ for $U \nearrow 1$.

The linearized ODE is
\begin{equation*}
\frac{d \delta \Gamma_s}{dU} = \frac{f(U)}{\Gamma_s^2} \delta \Gamma_s - \frac{\delta f(U)}{\Gamma_s},
\end{equation*}
so that, noticing that
$$
- \frac{f(U)}{\Gamma_s(U)^2} \sim - \frac{f'(1)}{\lambda_-(1,\beta)^2} \frac{1}{1 - U} \quad \text{as $U \to 1$}
$$
and following the analysis leading to \eqref{Equa:deltaGamma_uu}, the solution can be written as
\begin{equation*}
\delta \Gamma(U) = \int_U^1 \frac{\delta f(V)}{\Gamma(V)} e^{- \int_U^V \frac{f(W)}{\Gamma_s(W)^2} dW} dV = \int_U^1 \frac{\delta f(V)}{\Gamma_s(U)} e^{\int_U^V \frac{\beta}{\Gamma_s(W)} dW} dV.
\end{equation*}
In particular $\delta \Gamma_s \geq 0$ being $\delta f \geq 0$.

Observing that
$$
\Gamma_s(U) \geq \frac{(1- U)}{C(\|f\|_{C^2})}
$$
for some constant $C = C(\|f\|_{C^2})$ and $f \geq 0$ in $[U^*,1]$, we can estimate
\begin{equation*}
\begin{split}
|\delta \Gamma(U)| &= \bigg| \int_U^1 \frac{\delta f(V)}{1-V} \frac{1-V}{\Gamma_s(V)} e^{- \int_U^V \frac{f(W)}{\Gamma(W)^2} dW} dV \bigg| \\
&\leq \|\delta f\|_{C^1([U,1])} \int_U^1 \frac{1 - V}{\Gamma_s(V)} dV \leq C(\|f\|_{C^2}) \|\delta f\|_{C^1([U,1])} (1-U).
\end{split}
\end{equation*}
This concludes the proof of \eqref{Equa:estimat_f_sGG}.
\end{proof}

\begin{remark}
\label{Rem:precise_partial_Gamma}
The analysis of  $\Gamma_s$ is much easier since $\Gamma_s$ is uniformly positive and we know the asymptotic behavior near $U = 1$.

The main difficulty in studying $\Gamma_u$ arises when $\beta \sim \beta^{**}$: in this case the trajectory $\Gamma_u$ switches from the strongly stable manifold corresponding to $\lambda_-(U^*,\beta)$ to the stable invariant manifold corresponding to $\lambda_+(U^*,\beta)$. We will see that this has consequences for the analysis of $\partial^2_\beta \Gamma_u$ and the $C^1$-regularity of $f \mapsto \Gamma_u(U,\beta,f)$.
\end{remark}

Using the regularity of the unstable/stable manifold w.r.t. the parameters of the ODE, we can prove Fr\'echet differentiability of the maps
\begin{equation*}
(U,\beta,f) \mapsto \Gamma_u(U,\beta,f), \Gamma_s(U,\beta,f)
\end{equation*}
w.r.t. the product topology of $\R \times C^2([0,1]) \times \R$.

\begin{corollary}
\label{Cor:Frechet_Gamma}
Let $f$ satisfy Assumption \eqref{H1} and $U \in [0,U_{uf}(U,\beta))$: then the map
\begin{equation*}
(U,\beta,f) \mapsto \Gamma_u(U,\beta,f)
\end{equation*}
is Fr\'echet differentiable w.r.t. the product topology of
\begin{equation}
\label{Equa:space_funct_1}
\R \times \big\{ f \in C^2([0,1]), f(0) = f(1) = 0 \big\} \times \R,
\end{equation}
with derivative
\begin{equation*}
\begin{split}
\langle \delta \Gamma_u(U,\beta,f),(\delta \beta,\delta f,\delta U) \rangle &= - \delta \beta \int_0^U e^{\int_V^U \frac{f(W)}{\Gamma_u(W,\beta,f)^2} dW} dV \\
& \quad - \int_0^U \frac{\delta f(V)}{\Gamma_u(V,\beta,f)} e^{\int_V^U \frac{f(W)}{\Gamma_u(W,\beta,f)^2} dW} dV - \delta U \bigg( \frac{f(U)}{\Gamma_u(U,\beta,f)} + \beta \bigg).
\end{split}
\end{equation*}

Similarly, for $f$ satisfying Assumption \eqref{H1} and $U \in (U^*,1]$, the map
\begin{equation*}
(U,\beta,f) \mapsto \Gamma_s(U,\beta,f)
\end{equation*}
is Fr\'echet differentiable w.r.t. the product topology of the space \eqref{Equa:space_funct_1}, with derivative
\begin{equation*}
\begin{split}
\langle \delta \Gamma_s(U,\beta,f),(\delta \beta,\delta f,\delta U) \rangle &= \delta \beta \int_U^1 e^{-\int_U^V \frac{f(W)}{\Gamma_s(W,\beta,f)^2} dW} dV \\
& \quad + \int_U^1 \frac{\delta f(V)}{\Gamma_s(V,\beta,f)} e^{-\int_U^V \frac{f(W)}{\Gamma_s(W,\beta,f)^2} dW} dV - \delta U \bigg( \frac{f(U)}{\Gamma_s(U,\beta,f)} + \beta \bigg).
\end{split}
\end{equation*}
\end{corollary}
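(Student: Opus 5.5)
The plan is to deduce Fréchet differentiability from three ingredients. The first is the explicit Gateaux derivatives already computed in the proof of Theorem \ref{manifoldbound}. The second is continuity of these derivatives in $(U,\beta,f)$. The third is the standard fact that a map with continuous Gateaux derivative (jointly in the base point, in operator norm) is Fréchet differentiable.

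\emph{The three partial derivatives.} The $\delta\beta$ component is formula \eqref{Equa:sol_der_beta_u}, and the $\delta f$ component is \eqref{Equa:deltaGamma_uu}. The $\delta U$ component is simply $\partial_U\Gamma_u = -f(U)/\Gamma_u - \beta$ from \eqref{Equa:Gammau_eq0}. In the corollary this last term appears with a minus sign convention, which I would check against the ODE. For $\Gamma_s$ the analogous formulas come from Step 1.2 and Step 3.

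The directional derivatives exist by classical smooth dependence of the unstable and stable manifolds on parameters, as in \cite[Theorem 6.2.8]{katokhass}. Concretely, I would write $\Gamma_u$ near $0$ as the fixed point of a contraction depending $C^1$ on $(\beta,f)\in\R\times C^2$ and then continue it by the ODE flow on $[\underline r,U]$. On that range $\Gamma_u>0$ because $U<U_{uf}(\beta)$, so the vector field $-f/P-\beta$ is $C^1$ in $(P,\beta,f)$. This yields a $C^1$ dependence on compact subsets of $(0,U_{uf})$. The linearized ODE then identifies the derivative with the integral formulas, using the boundary normalization at $U\to0$ exactly as in the derivation of \eqref{Equa:deltaGamma_uu}.

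\emph{Continuity of the derivative in operator norm.} I would show that the linear map $\delta f\mapsto\int_0^U \frac{\delta f(V)}{\Gamma_u(V)}e^{\int_V^U f/\Gamma_u^2}\,dV$ is bounded by $C\|\delta f\|_{C^1}$ and depends continuously on $(U,\beta,f)$. The bound is the estimate of Theorem \ref{manifoldbound}. For continuity, I would split the integral into $[0,\epsilon]$ and $[\epsilon,U]$. On $[\epsilon,U]$ the kernel varies continuously, because $\Gamma_u$ does and stays bounded away from $0$ there. On $[0,\epsilon]$ the contribution is uniformly small: near $0$ we have $\Gamma_u\approx\lambda_+(0,\beta)V$ by \eqref{Equa:expans_U0}, and $|\delta f(V)|\le\|\delta f\|_{C^1}V$ since $\delta f(0)=0$. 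Exactly as in Step 2.2 this gives a contribution of order $\epsilon\|\delta f\|_{C^1}$ uniformly in the nearby parameters.

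The $\beta$-component is treated the same way. For $\Gamma_s$ one argues symmetrically near $U=1$, using $\Gamma_s\ge(1-U)/C$. The restriction $U>U^*$ guarantees $\Gamma_s>0$ on $[U,1)$.

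\emph{Main obstacle.} The delicate point is the uniformity near the singular endpoints, $U=0$ for $\Gamma_u$ and $U=1$ for $\Gamma_s$, where $\Gamma$ vanishes and the kernel $f/\Gamma^2\sim c/V$ is singular. Here I would rely on the contraction/stable-manifold-theorem representation, which gives $C^1$ dependence of the slope function $\Gamma_u(U)/U$ on $(\beta,f)$ in a fixed neighborhood $[0,\underline r]$. The radius $\underline r$ depends only on $\|f\|_{C^2}$, so it is locally uniform in $f$.

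Away from $\beta^{**}$ and for $U<U_{uf}$ nothing else degenerates. Notably, no uniformity up to $U_{uf}$ is claimed, so the $\beta^{**}$ issues of Step 2.6 are avoided.
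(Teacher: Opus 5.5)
Your proposal is correct and uses the same ingredients as the paper's proof. These are: the local invariant manifold at the saddle as the fixed point of a contraction depending smoothly on $(\beta,f)$; continuation by the ODE, which depends smoothly on $(\beta,f)$ where $\Gamma_u>0$ (resp. $\Gamma_s>0$); and identification of the derivative through the linearized equations, namely \eqref{Equa:sol_der_beta_u} and \eqref{Equa:deltaGamma_uu} for $\Gamma_u$ and the analogous formulas of Steps 1.2 and 3 for $\Gamma_s$. The $\delta U$ term is indeed just $\partial_U\Gamma_u=-f/\Gamma_u-\beta$. Your extra Gateaux-plus-operator-norm-continuity layer, which splits the integral near the singular endpoint, is redundant once the $C^1$ dependence from the contraction and the flow is granted, but it harmlessly makes explicit the joint continuity that the paper leaves implicit.
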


Note that we are not saying anything at the point $(U^*,0)$, in particular when $\beta = \beta^{**}$.

\begin{proof}
We present the proof only for $\Gamma_u$, and only for the derivative w.r.t. $\beta,f$, being the differentiability w.r.t. $U$ obtained directly from the ODE.

Recall that:
\begin{itemize}
\item near $(U,P) = (0,0)$ there is a neighborhood whose size $(0,\underline{r})$ is depending only on the $C^2$-norm of $f$, such that $\Gamma_u$ is the unique fixed point a a contraction map, depending smoothly on $\beta,f$ (\cite[Theorem 8.2]{coudeneerne_ergodic}),
\item the ODE depends smoothly on $\beta,f$.
\end{itemize}
The formula for the derivative is obtained by using \eqref{Equa:sol_der_beta_u} and \eqref{Equa:deltaGamma_uu}.
\end{proof}

In the next statement we will prove the second derivatives
\begin{equation*}
\partial_\beta^2 \Gamma_u(U,\beta) = \lim_{\delta \beta \searrow 0} \frac{\partial_\beta \Gamma_u(\beta + \delta \beta,U) - \partial_\beta \Gamma_u(U,\beta)}{\delta \beta}, \quad U \in [0,U_u(\beta)]
\end{equation*}
\begin{equation*}
\partial_\beta^2 \Gamma_s(U,\beta) = \lim_{\delta \beta \searrow 0} \frac{\partial_\beta \Gamma_s(\beta + \delta \beta,U) - \partial_\beta \Gamma_s(U,\beta)}{\delta \beta}, \quad U \in [0,1],
\end{equation*}
exist and are bounded when $\beta$ is away from the critical speed $\beta^{**}$. This completes the proof of Theorem \ref{manifoldbound_intro}. 
%

\begin{theorem}
\label{Theo:second_der_Gammaus}
The following holds.
\begin{enumerate}
\item For all $\beta \leq \underline{\beta} < \beta^{**}$ there is a constant $C(f,\underline{\beta})$ depending only on
\begin{equation*}
\begin{split}
&\|f\|_{C_2}, \quad \max \Big\{ \Gamma_u(U)^{-1}, \beta \in [\beta^*,\underline{\beta}], U \in [\bar r,U_u(\beta)] \Big\} 
\end{split}
\end{equation*}
such that
\begin{equation}
\label{Equa:beta2_Gamma_u}
\big| \partial^2_\beta \Gamma_u(U,\beta) \big| \leq C(f,\underline{\beta}), \quad U \in [0,U_u(\beta)].
\end{equation}

\item There is a constant depending only on $\|f\|_{C^2}$ such that
\begin{equation}
\label{Equa:beta2_Gamma_s}
0 \leq \partial^2_\beta \Gamma_s(U) \leq C(\|f\|_{C^2}), \quad U \in [U^*,1].
\end{equation}
\end{enumerate}
\end{theorem}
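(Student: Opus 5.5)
We differentiate once more in $\beta$ the linear ODE satisfied by $\delta\Gamma_u=\partial_\beta\Gamma_u$ and use the same variation-of-constants representation as in Step 1 of the proof of Theorem \ref{manifoldbound}. Write $\delta^2\Gamma=\partial^2_\beta\Gamma$. Differentiating $\dot{\delta\Gamma} = \frac{f}{\Gamma^2}\delta\Gamma - 1$ gives
\begin{equation*}
\frac{d}{dU}\delta^2\Gamma = \frac{f(U)}{\Gamma^2}\delta^2\Gamma - \frac{2f(U)}{\Gamma^3}(\delta\Gamma)^2 .
\end{equation*}
The initial behaviour is $\lim_{U\to0}\delta^2\Gamma_u/U=\partial^2_\beta\lambda_+(0,\beta)$, and $\lim_{U\to1}\delta^2\Gamma_s/(1-U)=-\partial^2_\beta\lambda_-(1,\beta)$. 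Existence of these derivatives near the equilibria follows from the smooth dependence of the local invariant manifolds on parameters used in Corollary \ref{Cor:Frechet_Gamma}. Away from the equilibria it follows from smooth dependence of ODE solutions on parameters.

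For $\Gamma_s$ the same integrating factor as in Step 1.2 gives
\begin{equation*}
\delta^2\Gamma_s(U)=\int_U^1 \frac{2f(V)}{\Gamma_s(V)^3}(\delta\Gamma_s(V))^2 e^{-\int_U^V \frac{f}{\Gamma_s^2}dW}dV .
\end{equation*}
On $[U^*,1]$ we have $f\ge0$, so the integrand is nonnegative and the exponential is at most $1$. This gives $\delta^2\Gamma_s\ge0$. For the upper bound we use three estimates: $0<\delta\Gamma_s\le 1-V$ from \eqref{Equa:Gammas_beta}, $\Gamma_s(V)\ge(1-V)/C(\|f\|_{C^2})$ as in Step 3, and $f(V)\le\|f\|_{C^1}(1-V)$. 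Together they make the integrand bounded by $C(\|f\|_{C^2})$, which yields \eqref{Equa:beta2_Gamma_s}. One should check that the bound on $\Gamma_s$ from below is uniform in $\beta\ge\beta^*$; it is, since $\Gamma_s$ is increasing in $\beta$.

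For $\Gamma_u$ the analogous representation is
\begin{equation*}
\delta^2\Gamma_u(U)=-\int_0^U\frac{2f(V)}{\Gamma_u(V)^3}(\delta\Gamma_u(V))^2 e^{\int_V^U\frac{f}{\Gamma_u^2}dW}dV .
\end{equation*}
Here $\frac{f(V)}{\Gamma_u(V)^3}(\delta\Gamma_u)^2$ plays exactly the role that $\delta f(V)/\Gamma_u(V)$ played in \eqref{Equa:deltaGamma_uu}. We therefore reuse the case analysis of Steps 2.2--2.3.

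Near $U=0$ we have $\Gamma_u\sim\lambda_+(0,\beta)V$, $|\delta\Gamma_u|\le CV$ by \eqref{Equa:Gammau_beta}, and $|f(V)|\le CV$. Hence the source is $\mathcal O(1)$, bounded like $\|\delta f\|_{C^0}$. The kernel estimate $e^{-\int_V^U\beta/\Gamma_u}\le C(V/U)^{\beta/\lambda_+(0,\beta)}$ with $-\lambda_+(0,\beta)<\beta$ then gives $|\delta^2\Gamma_u(U)|\le C(\|f\|_{C^2})U$ on $[0,\underline r]$.

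On $[\underline r,U_u(\beta)]$ with $\beta\le\underline\beta<\beta^{**}$, the function $\Gamma_u$ is bounded below by $m=\min\Gamma_u>0$. So the source is bounded by $C(\|f\|_{C^2})m^{-3}$. The exponential factor is controlled exactly as in Step 2.3: use $e^{\int f/\Gamma_u^2}\le U/V$ on the part where $\Gamma_u^2\ge Vf$, and $f<0$ elsewhere. This yields \eqref{Equa:beta2_Gamma_u} with a constant depending on $\|f\|_{C^2}$ and $\max\Gamma_u^{-1}$. The estimate needs $U_u(\beta)$ to stay bounded away from $U^*$; this holds because $\underline\beta<\beta^{**}$ keeps $\Gamma_u(U_u)$ bounded below, by compactness of $[\beta^*,\underline\beta]$ and continuity.

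The main obstacle is the rigorous justification that $\partial^2_\beta$ exists and obeys the linearized equation down to $U=0$, where the equation is singular. We plan to handle it as for the first derivative. First, write $\Gamma_u$ near $0$ as the fixed point of a contraction depending $C^2$ on $\beta$ (this uses $f\in C^3$). Second, confirm that the integral formula selects the unique solution with linear growth at $0$: the homogeneous solutions grow like $U^{f'(0)/\lambda_+^2}$, whose exponent is negative, so they are excluded. The cubic source term carries $\Gamma_u^{-3}$, but it is compensated by $f(\delta\Gamma)^2\sim V^3$, so no new singularity appears. This is also exactly where the constant degenerates as $\underline\beta\to\beta^{**}$, since $m\to0$.
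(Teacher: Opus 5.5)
Your proposal is correct and follows essentially the same route as the paper. It uses the variation-of-constants representations of $\partial^2_\beta\Gamma_u$ and $\partial^2_\beta\Gamma_s$, bounds the source term $2f(\partial_\beta\Gamma)^2/\Gamma^3$ via \eqref{Equa:Gammau_beta}, \eqref{Equa:Gammas_beta} and the lower bounds on $\Gamma_u,\Gamma_s$ (with nonnegativity for $\Gamma_s$), and controls the kernel $e^{\int_V^U f/\Gamma_u^2}$ by the sign of $f$ on $[0,U^*]$ together with $\Gamma_u\ge P^*$ on $[U^*,U_u(\beta)]$. Near $U=0$ your detour through the kernel $e^{-\int\beta/\Gamma_u}$ is unnecessary: since $f\le 0$ there, the exponential is already at most $1$, which is how the paper handles it.
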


The fact that $\partial_\beta^2 \Gamma_u$ may be positive will have implications in the analysis of the convexity of the cost functional $E(\beta)$ also in the regions where $\beta \not= \beta^{**}$, see the examples in \cite{bianchini2026travelingprofilescontrolcost}.

\begin{proof}
We study separately $\Gamma_u$ and $\Gamma_s$, and we will do the estimate only for the endpoints $U_u(\beta), U_s(\beta)$.

\smallskip

\noindent{\it Step 1: analysis of $\Gamma_u$.} The equation for $\partial^2_\beta \Gamma_u$ is
\begin{equation*}
\frac{d}{dU} \partial_\beta^2 \Gamma_u = \frac{f(U)}{\Gamma_u^2} \partial_\beta^2 \Gamma_u - \frac{2 f(U)}{\Gamma_u^3} (\partial_\beta \Gamma_u)^2, \quad \lim_{U \to 0} \frac{\partial_\beta^2 \Gamma_u}{U} = \partial^2_\beta \lambda_+(0,\beta).
\end{equation*}
As in the case of $\partial_\beta \Gamma_u$ (Step 1.1 of the proof of Theorem \ref{manifoldbound}), we obtain the representation
\begin{equation}
\label{Equa:formula_beta2u}
\partial_\beta^2 \Gamma_u(U) = - \int_0^{U} \frac{2 f(V)}{\Gamma_u(V)^3} \big( \partial_\beta \Gamma_u(V) \big)^2 e^{\int_V^U \frac{f(W)}{\Gamma_u(W)^2} dW} dV.
\end{equation}

Using \eqref{Equa:Gammau_beta} we have
\begin{equation*}
\frac{f(V) \big( \partial_\beta \Gamma_u(V) \big)^2 }{\Gamma_u(V)^3} \leq C(f),
\end{equation*}
where $C(f)$ depends only on
$$
\|f\|_{C^2} \quad  \text{and} \quad \max \Big\{ \Gamma_u(U)^{-1}, \beta \in [\beta^*,\underline{\beta}], U \in [\bar r,U_u(\beta)] \Big\}.
$$
Since $f \leq 0$ for $U \leq U^*$, we then obtain
\begin{equation*}
\begin{split}
|\partial^2_\beta \Gamma_u(U_u(\beta)| &\leq C(f) \int_0^{U_u(\beta)} e^{\int_{U^*}^{U_u(\beta)} \frac{f(W)}{\Gamma_u(W)^2} dW} dV \leq C(f). 
\end{split}
\end{equation*}
which gives \eqref{Equa:beta2_Gamma_u} because of \eqref{Equa:rel_barU_U*}.

\smallskip

\noindent{\it Step 2: analysis of $\Gamma_s$.} The computations are similar, with the simplification that from \eqref{Equa:Gammas_beta} and $\Gamma_s(U) \geq \frac{1 - U}{C(\|f\|_{C^2})}$ we have
\begin{equation*}
0 \leq \frac{\partial_\beta \Gamma_s(U)}{\Gamma_s(U)}, \frac{f(U)}{\Gamma_s(U)} \leq C(\|f\|_{C^2}), \quad U \in (U^*,1).
\end{equation*}
The solution formula for $\partial_\beta^2 \Gamma_s$ is
\begin{equation*}
\partial_\beta^2 \Gamma_s(U_s) = \int_{U_s}^1 \frac{2 f(V)}{\Gamma_s(V)^3} \big( \partial_\beta \Gamma_s(V) \big)^2 e^{- \int_{U_s}^V \frac{f(W)}{\Gamma_s(W)^2} dW} dV,
\end{equation*}
and using the previous bounds one immediately deduces \eqref{Equa:beta2_Gamma_s}.
\end{proof}

Let $\gls{Ubarubeta} = \max \, \gls{Cu}$ be the last intersection of $\Gamma_u$ with $P^*$, and \newglossaryentry{Ubarsbeta}{name=\ensuremath{\overline{U}_s(\beta)},description={first point of intersection of $\gamma_u(\beta)$ with $P^*$}} $\gls{Ubarsbeta} = \min \, \gls{Cs}$ be the first intersection of $\Gamma_s$ with $P^*$. Then we have also the following

\begin{corollary}
\label{Cor:bound_hatu}
The estimate \eqref{Equa:beta2_Gamma_u} holds in the interval $[0,\overline {U}_u(\beta)]$ and $\beta^* \leq \beta \leq \underline{\beta} < \beta^{**}$.
\end{corollary}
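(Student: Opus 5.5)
The plan is to extend the representation formula \eqref{Equa:formula_beta2u} from $[0,U_u(\beta)]$ to $[U_u(\beta),\overline U_u(\beta)]$ by propagating along the linear ODE
\begin{equation*}
\frac{d}{dU} \partial_\beta^2 \Gamma_u = \frac{f(U)}{\Gamma_u^2} \partial_\beta^2 \Gamma_u - \frac{2 f(U)}{\Gamma_u^3} (\partial_\beta \Gamma_u)^2 .
\end{equation*}
Its solution is
\begin{equation*}
\partial_\beta^2 \Gamma_u(U) = \partial_\beta^2 \Gamma_u(U_u) e^{\int_{U_u}^U \frac{f(W)}{\Gamma_u(W)^2} dW} - \int_{U_u}^U \frac{2 f(V)}{\Gamma_u(V)^3} (\partial_\beta \Gamma_u(V))^2 e^{\int_V^U \frac{f(W)}{\Gamma_u(W)^2} dW} dV .
\end{equation*}
The first term is controlled by Theorem \ref{Theo:second_der_Gammaus}. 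For the remaining terms I need three things on $[U_u,\overline U_u]\subset(U^*,1)$: a uniform lower bound on $\Gamma_u$, a bound on $\partial_\beta\Gamma_u$, and a bound on the exponential factor.

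\emph{Lower bound on $\Gamma_u$.} For $\beta\le\underline\beta<\beta^{**}$ the curve $\Gamma_u$ is strictly positive on $[0,U_{uf}(\beta))$. It also crosses $P^*$ at $\overline U_u(\beta)$, and $P^*>0$ on $(U^*,1)$, so $\overline U_u(\beta)<U_{uf}(\beta)$. By continuity in $\beta$ on the compact range $[\beta^*,\underline\beta]$, and since $\overline U_u$ stays away from $1$ (as $P^*\to0$ there and $\Gamma_u$ is bounded below), I expect $\min_{[U_u,\overline U_u]}\Gamma_u\ge c(f,\underline\beta)>0$. This is the same type of quantity that already enters $C(f,\underline\beta)$.

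\emph{Bound on $\partial_\beta\Gamma_u$.} I use the representation \eqref{Equa:sol_der_beta_u}, which is valid on the whole domain where $\Gamma_u>0$. Alternatively, I restart the linear ODE for $\partial_\beta\Gamma_u$ from $U_u$, where \eqref{Equa:Gammau_beta} gives a bound.

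\emph{Bound on the exponential.} Here $f>0$, so $e^{\int_V^U f/\Gamma_u^2}$ is not automatically $\le1$. I bound it by $\exp\big((\overline U_u-U_u)\,\|f\|_{C^0}/c^2\big)$.

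Substituting these bounds, the integral term is at most $C(f,\underline\beta)$, which gives \eqref{Equa:beta2_Gamma_u} on $[0,\overline U_u(\beta)]$.

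The main obstacle is the uniformity of the lower bound on $\Gamma_u$ over $[U_u,\overline U_u]$. Between consecutive crossings $\Gamma_u$ may dip below $P^*$, so the inequality $\Gamma_u\ge P^*$ used in Step 1.1 of Theorem \ref{manifoldbound} is no longer available. Moreover, the intersection points $\overline U_u(\beta)$ may jump as $\beta$ varies. I will handle this in either of two ways, trying the first before the second:
\begin{itemize}
\item Use the identity $\int_V^U f/\Gamma_u^2=\ln(\Gamma_u(V)/\Gamma_u(U))-\int_V^U\beta/\Gamma_u$, as in the derivation of \eqref{Equa:deltaGamma_u}. This reduces the exponential factor to a ratio of values of $\Gamma_u$ times a bounded factor, so only $\max\Gamma_u^{-1}$ on $[\bar r,\overline U_u(\beta)]$ enters.
\item Include that maximum in the dependence of the constant. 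This is consistent with how $C(f,\underline\beta)$ is described in Theorem \ref{Theo:second_der_Gammaus}.
\end{itemize}
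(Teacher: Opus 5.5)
Your restart of the linear equations for $\partial_\beta\Gamma_u$ and $\partial_\beta^2\Gamma_u$ at $U_u(\beta)$ is correct. It is exactly the reduction the paper makes: everything follows once $\Gamma_u(\cdot,\beta)\ge c>0$ on $[U_u(\beta),\overline U_u(\beta)]$, with $c$ independent of $\beta\in[\beta^*,\underline\beta]$.

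\textbf{The gap.} That uniform lower bound is the whole content of the corollary, and you never establish it.

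\emph{The compactness argument fails as written.} $U_u(\beta)$ and $\overline U_u(\beta)$ jump when crossings with $P^*$ are created or destroyed. So there is no reason for $\beta\mapsto\min_{[U_u(\beta),\overline U_u(\beta)]}\Gamma_u(\cdot,\beta)$ to be lower semicontinuous. Saying that $\overline U_u$ stays away from $1$ ``because $\Gamma_u$ is bounded below'' is circular. In fact $P^*\to0$ at $1$ works against you, since it allows crossings at small height.

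\emph{Option (i) does not remove the difficulty.} The identity turns $e^{\int_V^U f/\Gamma_u^2}$ into $\frac{\Gamma_u(V)}{\Gamma_u(U)}e^{-\int_V^U\beta/\Gamma_u}$. This still contains $1/\Gamma_u(U)$, and for $\beta<0$ also $e^{|\beta|\int_V^U 1/\Gamma_u}$.

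\emph{Option (ii) simply postulates the conclusion.} It assumes $\max\{\Gamma_u^{-1}:\beta\in[\beta^*,\underline\beta],\ U\in[\underline r,\overline U_u(\beta)]\}$ is finite, which is precisely what must be proved. Moreover, this quantity is not among those on which the constant in \eqref{Equa:beta2_Gamma_u} may depend.

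\textbf{The missing idea.} You need a confinement argument based on monotonicity in $\beta$ and on the sign of $d\Gamma_u/dU$ for $U>U^*$.

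\emph{Crossings lie in a fixed compact set.} For $\beta\in[\beta^*,\underline\beta]$ one has $\Gamma_u(\cdot,\underline\beta)\le\Gamma_u(\cdot,\beta)\le\Gamma_u(\cdot,\beta^*)=\Gamma_s(\cdot,\beta^*)\le C(1-U)$. The lower comparison gives $\Gamma_u(\cdot,\beta)>P^*$ on $(U^*,U_u(\underline\beta))$. The upper one, together with $P^*(U)\ge c\sqrt{1-U}$ near $1$, excludes crossings in some $(1-\delta,1)$. Hence $C_u(\beta)\subset K:=[U_u(\underline\beta),1-\delta]\Subset(U^*,1)$ for all such $\beta$. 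At the endpoints $U_u(\beta)$ and $\overline U_u(\beta)$ we have $\Gamma_u=P^*\ge\min_K P^*>0$.

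\emph{$\Gamma_u$ cannot dip between crossings.} If $\beta\ge0$, then $d\Gamma_u/dU=-f/\Gamma_u-\beta<0$ on $(U^*,U_{uf}(\beta))$, so the minimum is $P^*(\overline U_u(\beta))$. If $\beta<0$, at an interior minimum $\Gamma_u=f/|\beta|\ge(\min_K f)/|\beta^*|$.

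This gives $c=c(f,\underline\beta)>0$, after which your propagation estimate closes the proof.

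The paper's argument is much terser but of the same type. It uses that $\Gamma_u$ decreases for $U>U^*$ while $P^*$ increases just to the right of $U^*$. Thus a second crossing forces $U_u(\beta)\ge U^*+\underline r$, and $\Gamma_u$ is then bounded below by a minimum over $[U^*+\underline r,1-\underline r]$.
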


\begin{proof}
It suffices to prove that if $\overline U_u(\beta) > U_u(\beta)$, then $\Gamma_u(U,\beta)$ is uniformly positive in $[U_u(\beta),\overline U_u(\beta)]$. We observe first that since $\Gamma_u$ is decreasing for $U > U^*$ and $P^*(U)$ increasing in $(U^*,U^* + \underline{r})$, then if $U_u(\beta) < U^* + \underline{r}$ then there are no further intersections. Hence if $\overline U_u(\beta) > U_u(\beta)$ we must have
\begin{equation*}
\Gamma_u(U) > \min_{[U^* + \bar r,1-\bar r]} f(U) > 0. \qedhere
\end{equation*}
\end{proof}

\subsection{Sharp bounds on regularity}
\label{Ss:sharp_regul}

In this last section we prove two results, which show that the regularity results obtained above are optimal. Together they provide a proof of Theorem \ref{Theo:disco_cont_intro}.

The first result below is the key estimate for the loss of regularity of $E(\beta)$ at $\beta^{**}$, see \cite{bianchini2026travelingprofilescontrolcost}. 

\begin{proposition}
\label{Prop:blowupsecond}
If
\begin{equation*}
\frac{\lambda_+(\beta^{**},U^*)}{\lambda_-(\beta^{**},U^*)} < \frac{1}{2},
\end{equation*}
then
\begin{equation*}
\lim_{\beta \nearrow \beta^{**}} \partial^2_\beta \Gamma_u(\beta,U_u(\beta)) = +\infty.
\end{equation*}
\end{proposition}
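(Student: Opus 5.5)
The plan is to start from the representation formula \eqref{Equa:formula_beta2u},
\begin{equation*}
\partial_\beta^2 \Gamma_u(U) = - \int_0^{U} \frac{2 f(V)}{\Gamma_u(V)^3} \big( \partial_\beta \Gamma_u(V) \big)^2 e^{\int_V^U \frac{f(W)}{\Gamma_u(W)^2} dW} dV,
\end{equation*}
evaluated at $U = U_u(\beta)$, and to show that the contribution from a left neighbourhood $(U^*-\underline r, U^*)$ of the saddle-like passage blows up as $\beta \nearrow \beta^{**}$. Since $f<0$ on $(0,U^*)$, the integrand is nonnegative there. The portion on $[0,U^*-\underline r]$ is bounded uniformly, by the estimates of Theorem \ref{manifoldbound} (Steps 2.2, 2.5). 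The piece $[U^*,U_u(\beta)]$ has length $\mathcal O(\Gamma_u(U_u))$ by \eqref{Equa:rel_barU_U*}, and its contribution I expect to bound in absolute value by a quantity of lower order than the divergent term. It therefore suffices to produce a lower bound for
\begin{equation*}
\int_{U^*-\underline r}^{U^*-c\varepsilon} \frac{-2 f(V)}{\Gamma_u(V)^3} \big( \partial_\beta \Gamma_u(V) \big)^2 e^{\int_V^{U_u} \frac{f(W)}{\Gamma_u(W)^2} dW} dV,
\end{equation*}
where $\varepsilon = \varepsilon(\beta)$ measures the distance of $\Gamma_u(\beta)$ from $(U^*,0)$, for instance $\varepsilon \sim \Gamma_u(U^*,\beta) \to 0$.

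The first step is the local geometry. For $\beta$ close to $\beta^{**}$ and $c\varepsilon \le U^*-V \le \underline r$, I would use the invariant-manifold expansion of \cite[Theorem 6.2.8]{katokhass}, as in Steps 2.4 and 2.6, to get $\Gamma_u(V) \approx -\lambda_-(U^*,\beta)(U^*-V)$ up to relative errors $\mathcal O(\underline r) + \mathcal O(\varepsilon/(U^*-V))$. Combined with $f(V) \approx -f'(U^*)(U^*-V)$ and $f'(U^*) = \lambda_+\lambda_-$, this gives
\begin{equation*}
\frac{f(W)}{\Gamma_u(W)^2} \approx -\frac{r}{U^*-W}, \qquad r := \frac{\lambda_+(\beta^{**},U^*)}{\lambda_-(\beta^{**},U^*)} .
\end{equation*}
Hence the exponential weight behaves like $\big((U^*-U)/(U^*-V)\big)^{r}$, with the endpoint effectively cut at scale $\varepsilon$. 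The passage from $U^*-c\varepsilon$ to $U_u$ only contributes a bounded factor, by \eqref{Equa:min_ration_22} and \eqref{Equa:rel_barU_U*}.

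The second step is the asymptotics of $\partial_\beta\Gamma_u$ in the same region, using \eqref{Equa:sol_der_beta_u}. Inserting the same weight shows that $-\partial_\beta \Gamma_u(V) \ge c_0 (U^*-V)^{r}$ for $U^*-V \ge c\varepsilon$. Here $c_0>0$ comes from the strictly negative value $\partial_\beta\Gamma_u(U^*-\underline r)$ (Step 1.1 of Theorem \ref{manifoldbound}), transported with decay $(U^*-V)^r$, plus the nonnegative source term. Putting everything together, the integrand is bounded below by
\begin{equation*}
c\,(U^*-V)^{-2+2r-r}\,\varepsilon^{r},
\end{equation*}
and integrating over $U^*-V \in (c\varepsilon,\underline r)$ gives a lower bound of order $\varepsilon^{r-1}\cdot\varepsilon^{r} = \varepsilon^{2r-1}$. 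This tends to $+\infty$ exactly when $r<1/2$. Note that $r = \lambda_+/\lambda_- \in (0,1)$, since both eigenvalues are negative at $\beta^{**} \ge 2\sqrt{f'(U^*)}$. With the remaining pieces bounded, this yields $\partial_\beta^2\Gamma_u(U_u(\beta)) \to +\infty$.

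The main obstacle is making the approximations uniform in the transition zone $U^*-V \sim \varepsilon$, where $\Gamma_u$ leaves the strong stable direction. There the relative errors in $\Gamma_u \approx -\lambda_-(U^*-V)$ are not small, and the exponents must not drift by more than $o(1)$, since the conclusion hinges on the sign of $2r-1$. I would first try to avoid the transition zone entirely: restrict the lower bound to $U^*-V \ge K\varepsilon$ with $K$ large, where the errors are $\mathcal O(1/K)$ and the exponents change by $\mathcal O(1/K)$. Since $2r-1<0$ strictly, fixing $K$ large enough keeps the exponent negative. For the upper control of the remaining pieces, I would use only the crude bounds already proved, namely \eqref{Equa:Gammau_beta} and \eqref{Equa:rel_barU_U*}.
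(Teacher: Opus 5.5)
Your argument is correct in substance and uses the same mechanism as the paper. Both start from the representation formula \eqref{Equa:formula_beta2u} and localise to $(U^*-\underline r,U^*)$. Both bound $|\partial_\beta\Gamma_u|$ from below by transporting the fixed negative value at $U^*-\underline r$. In both, the decay of $(\partial_\beta\Gamma_u)^2$ competes with the $\Gamma_u^{-3}$ singularity, giving $\varepsilon^{2r-1}$ with $\varepsilon=\Gamma_u(U^*)$.

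The bookkeeping differs.
\begin{itemize}
\item The paper uses only the crude uniform bound $\partial_\beta\Gamma_u(V)\le -C_0\,\Gamma_u(U^*)^{r}$ on the whole interval. This needs only the lower half of \eqref{Equa:Gammau_bound}, namely $\Gamma_u\ge\Gamma_u(U^*)+M^*_-$. It then integrates $\frac{-2f}{\Gamma_u^3}e^{\int_V^{U^*}f/\Gamma_u^2}$ over all of $(U^*-\underline r,U^*)$, transition zone included. An integration by parts based on the ODE gives a quantity of order $1/\Gamma_u(U^*)$, with the mass at $U^*-V\sim\varepsilon$.
\item You use the sharper pointwise bound $|\partial_\beta\Gamma_u(V)|\gtrsim (U^*-V)^r$ and integrate $\varepsilon^r(U^*-V)^{r-2}$ over $U^*-V\ge K\varepsilon$. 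The mass then sits at the lower cut-off, which makes the threshold $r=1/2$ transparent.
\end{itemize}

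Two points need adjusting.

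First, the transition zone is not a real obstacle, and your proposed asymptotics there are not right. Write $g=\Gamma_u-M^*_-$ and $s=U^*-V$. For $s\gg\varepsilon$ one has $\frac{dg}{ds}\approx \frac{r}{s}g$, so $g\sim \varepsilon^{1-r}s^r$. The relative error in $\Gamma_u\approx-\lambda_-(U^*-V)$ is therefore of order $(\varepsilon/s)^{1-r}$, not $\varepsilon/s$. In any case you do not need two-sided asymptotics:
\begin{itemize}
\item The exponentials, both in $\partial_\beta\Gamma_u$ and in the weight, need only the lower bound $\Gamma_u\ge\Gamma_u(U^*)+M^*_-$. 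This holds uniformly up to $V=U^*$ and gives exponents $r(1+C\underline r)$. Take $\underline r$ small and $\beta$ close to $\beta^{**}$ so that $2r(1+C\underline r)<1$.
\item The factor $\Gamma_u^{-3}$ needs only $\Gamma_u(V)\le \Gamma_u(U^*)+\beta(U^*-V)$. This follows from $\Gamma_u'=-f/\Gamma_u-\beta>-\beta$ on $(0,U^*)$.
\end{itemize}
With these one-sided bounds you can integrate over the whole interval without any $K$.

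Second, the negative contribution from $[U^*,U_u(\beta)]$ is not handled by the tools you name. With only $U_u-U^*=\mathcal O(\Gamma_u(U_u))$ from \eqref{Equa:rel_barU_U*}, the crude bound $\frac{(U_u-U^*)^2}{\Gamma_u(U_u)^3}$ is of order $1/\varepsilon$, which would swamp $\varepsilon^{2r-1}$. Use instead $\Gamma_u(U_u)=P^*(U_u)\ge c\sqrt{U_u-U^*}$, that is, $U_u-U^*=\mathcal O(\Gamma_u(U_u)^2)$. This makes the piece $\mathcal O(\sqrt{U_u-U^*})\to 0$, as in the paper.
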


\begin{figure}
\resizebox{.75\textwidth}{!}{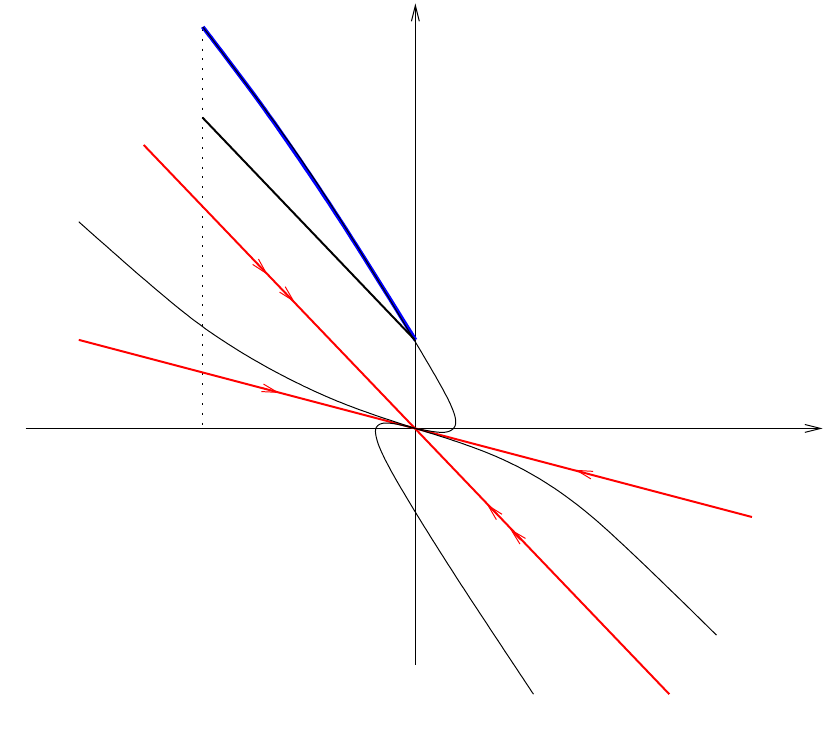}
\caption{The phase-plane picture near $(U^*,0)$ for $\beta \sim \beta^{**}$.}
\label{fig:nearUstar}
\end{figure}

\begin{proof}
If
$$
M^*_-(U,\beta) = \lambda_-(U^*,\beta) (U - U^*) + \mathcal O(\|f\|_{C^2}) (U - U^*)^2
$$
is the strongly stable invariant manifold corresponding to $\lambda_-(U^*,\beta)$ (see \cite[Theorem 6.2.8]{katokhass}), then writing
$$
P(U) = M^*_-(U,\beta) + g(U), \quad g(U) > 0,
$$
we obtain
\begin{equation*}
\frac{dg}{dU} = - \frac{f(U)}{M^*_-(U,\beta) + g(U)} - \beta + \frac{d}{dU} M^*_-(U,\beta) = \frac{f(U) g(U)}{M^*_-(U,\beta) (M^*_-(U,\beta) + g(U))} < 0
\end{equation*}
for $U \in (U^* - \underline{r},U^*)$, which gives
\begin{equation}
\label{Equa:Gammau_bound}
\Gamma_u(U^*) + M^*_-(U,\beta) \leq \Gamma_u(U) \leq \Gamma_u(U^* - \underline{r}) + \big( M^*_-(U,\beta) - M^*_-(\beta,U^* - \underline{r}) \big).
\end{equation}
A picture of trajectories near the point $(U^*,0)$ for $\beta$ close to $\beta^{**}$ is given in Fig. \ref{fig:nearUstar}.

Using the assumption that $f(U) < 0$ in $(0,U^*)$, we can estimate the following integral for $U^* - \underline{r} \leq V' \leq V \leq U^*$ as
\begin{equation*}
\begin{split}
\int_{V'}^{V} \frac{f(W)}{\Gamma_u(W,\beta)^2} dW &\geq \int_{U^*-\underline{r}}^{V} \frac{f(W)}{\Gamma_u(W,\beta)^2} dW \\
&\geq \int_{U^* - \underline{r}}^{V} \frac{f'(U^*) (W - U^*)}{(\Gamma_u(U^*) + (\lambda_-(U^*,\beta) - \mathcal O(\|f\|_{C^2}) \underline{r}) (W - U^*))^2} dW \\
&\geq - \frac{f'(U^*)}{(\lambda_-(U^*,\beta) - \mathcal O(\|f\|_{C^2}) \underline{r})^2} \ln \bigg( 1 + \frac{- \lambda_-(U^*,\beta) + \mathcal O(\|f\|_{C^2}) \underline{r}}{\Gamma_u(U^*)} (V - U^* + \underline{r}) \bigg).
\end{split}
\end{equation*}
Hence by \eqref{Equa:formula_Gammabeta}
\begin{equation}
\label{Equa:more_perc_der_beta}
\begin{split}
\partial_\beta \Gamma_u(V) &= \partial_\beta \Gamma_u(U^* - \underline{r}) e^{\int_{U^* - \underline{r}}^{V} \frac{f(W)}{\Gamma_u(W,\beta)^2} dW} - \int_{U^* - \underline{r}}^{V} e^{\int_{V'}^{V} \frac{f(W)}{\Gamma_u(W,\beta)^2} dW} dV' \\
&\leq - \mathcal O(1) \bigg( 1 + \frac{- \lambda_-(U^*,\beta) + \mathcal O(\|f\|_{C^2}) \underline{r}}{\Gamma_u(U^*)} (V - U^* + \underline{r}) \bigg)^{- \frac{f'(U^*)}{(\lambda_-(U^*,\beta) - \mathcal O(\|f\|_{C^2}) \underline{r})^2}} \\
&\leq - C_0 \Gamma_u(U^*)^{\frac{f'(U^*)}{(\lambda_-(U^*,\beta) - \mathcal O(\|f\|_{C^2}) \underline{r})^2}},
\end{split}
\end{equation}
where the constant depends only on $f$ (and $\underline r$, which in turn depends on $f$) for $V$ close to $U^*$. Formula \eqref{Equa:formula_beta2u} gives (being the second derivative positive)
\begin{equation*}
\begin{split}
\partial^2_\beta \Gamma_u(U^*) &= - \int_0^{U^*} \frac{2 f(V)}{\Gamma_u(V)^3} \big( \partial_\beta \Gamma_u(V) \big)^2 e^{\int_V^{U^*} \frac{f(W)}{\Gamma_u(W)^2} dW} dV \\
&= \partial^2_\beta \Gamma_u(U^* - \underline{r}) e^{\int_{U^* - \underline{r}}^{U^*} \frac{f(W)}{\Gamma_u(W)^2} dW} - \int_{U^* - \underline{r}}^{U^*} \frac{2 f(V)}{\Gamma_u(V)^3} \big( \partial_\beta \Gamma_u(V) \big)^2 e^{\int_V^{U^*} \frac{f(W)}{\Gamma_u(W)^2} dW} dV \\
&\geq \partial^2_\beta \Gamma_u(U^* - \underline{r}) \Gamma_u(U^*)^{\frac{f'(U^*)}{(\lambda_-(U^*,\beta) + \mathcal O(\|f\|_{C^2}) \underline{r})^2}} \\
& \quad + C_0^2 \Gamma_u(U^*)^{\frac{2 f'(U^*)}{(\lambda_-(U^*,\beta) + \mathcal O(\|f\|_{C^2}) \underline{r})^2}} \int_{U^* - \underline{r}}^{U^*} \frac{- 2 f(V)}{\Gamma_u(V)^3} e^{\int_V^{U^*} \frac{f(W)}{\Gamma_u(W)^2} dW} dV.
\end{split}
\end{equation*}
The latter integral can be estimated by integrating by parts
\begin{equation*}
\begin{split}
\int_{U^* - \underline{r}}^{U^*} \frac{- 2 f(V)}{\Gamma_u(V)^3} e^{\int_V^{U^*} \frac{f(W)}{\Gamma_u(W)^2} dW} dV &= \int_{U^* - \underline{r}}^{U^*} \frac{- f(V)}{\Gamma_u(V)^3} e^{\int_V^{U^*} \frac{f(W)}{\Gamma_u(W)^2} dW} dV \\
& \quad + \frac{e^{\int_V^{U^*} \frac{f(W)}{\Gamma_u(W)^2} dW}}{\Gamma_u(V)} \bigg|_{U^*-\underline{r}}^{U^*} - \int_{U^* - \underline{r}}^{U^*} \frac{\frac{d\Gamma(V)}{dV}}{\Gamma_u(V)^2} e^{\int_V^{U^*} \frac{f(W)}{\Gamma_u(W)^2} dW} dV \\
&= \frac{1}{\Gamma_u(U^*)} - \frac{e^{\int_{U^* - \underline{r}}^{U^*} \frac{f(W)}{\Gamma_u(W)^2} dW}}{\Gamma_u(U^* - \underline{r})} + \int_{U^* - \underline{r}}^{U^*} \frac{\beta}{\Gamma_u(V)^2} e^{\int_V^{U^*} \frac{f(W)}{\Gamma_u(W)^2} dW} dV \\
&= \frac{1 - e^{- \int_{U^* - \underline{r}}^{U^*} \frac{\beta}{\Gamma_u(W)} dW}}{\Gamma_u(U^*)} + \frac{1}{\Gamma_u(U^*)} \int_{U^* - \underline{r}}^{U^*} \frac{\beta}{\Gamma_u(V)} e^{- \int_V^{U^*} \frac{\beta}{\Gamma_u(W)} dW} dV \\
&= 2 \frac{1 - e^{- \int_{U^* - \underline{r}}^{U^*} \frac{\beta}{\Gamma_u(W)} dW}}{\Gamma_u(U^*)}. 
\end{split}
\end{equation*}
Using that the term
\begin{equation*}
\int_{U^*}^{U_u} \frac{f(V)}{\Gamma_u(V)^3} (\partial_\beta \Gamma_u(V))^2 e^{\int_{V}^{U_u} \frac{f(W)}{\Gamma_u(W)^2} dW} dV \leq C(f) \frac{(U_u - U^*)^2}{\Gamma_u(U_u)^3} \leq C_1 \sqrt{U_u - U^*},
\end{equation*}
we obtain that as $\beta \to \beta^{**}$ 
\begin{equation*}
\partial_\beta^2 \Gamma_u(U_u) \geq c \Gamma_u(U^*)^{\frac{2 f'(U^*)}{\lambda_-(U^*,\beta)^2} - 1} - C.
\end{equation*}
In particular, if
\begin{equation*}
\frac{f'(U^*)}{\lambda_-(\beta^{**},U^*)^2} = \frac{\lambda_+(\beta^{**},U^*)}{\lambda_-(\beta^{**},U^*)} < \frac{1}{2}
\end{equation*}
then the second derivative approaches $+\infty$ as $\beta \nearrow \beta^{**}$.
\end{proof}

This second result addresses the dependence of the derivatives w.r.t. $\beta$ and the dependence of $f \mapsto \Gamma_u(f)$ when $\beta \sim \beta^*$.

\begin{proposition}
\label{Prop:Lipechit_U_u}
The map
\begin{equation*}
f \mapsto \partial_\beta \Gamma_u(U_u(\beta,f),\beta,f)
\end{equation*}
is continuous in the $C^2$-topology of $f$. The map
\begin{equation*}
f \mapsto \Gamma_u(U_u(\beta,f),\beta,f)
\end{equation*}
is locally Lipschitz but in general not $C^1$.
\end{proposition}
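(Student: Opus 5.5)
The plan is to reduce both claims to the behaviour of the composite map $f\mapsto (U_u(\beta,f),\Gamma_u(\cdot,\beta,f))$, using the implicit function theorem where the crossing of $\Gamma_u$ with $P^*$ is transverse, and an explicit local analysis near $(U^*,0)$ otherwise. The first step is to characterize $U_u(\beta,f)$ as the first zero of $F_u(U)=\Gamma_u(U,\beta,f)-P^*(U,f)$. At such a zero the slope difference is
\begin{equation*}
\partial_U F_u(U_u) = -\frac{f(U_u)}{P^*(U_u)}-\beta-\frac{dP^*}{dU}(U_u) = -(F(U_u)+\beta).
\end{equation*}
When $F(U_u)+\beta\neq 0$, Corollary \ref{Cor:Frechet_Gamma} and the implicit function theorem show that $f\mapsto U_u(\beta,f)$ is $C^1$. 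The composite maps $f\mapsto \Gamma_u(U_u(f),\beta,f)=P^*(U_u(f),f)$ and $f\mapsto \partial_\beta\Gamma_u(U_u(f),\beta,f)$ are then $C^1$ and continuous respectively.

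For continuity of $f\mapsto\partial_\beta\Gamma_u(U_u)$ in general, including non-transverse first crossings and $\beta$ near $\beta^{**}$, I will not need differentiability of $U_u$. I will instead use formula \eqref{Equa:sol_der_beta_u},
\begin{equation*}
\partial_\beta\Gamma_u(U)=-\int_0^U e^{\int_V^U f/\Gamma_u^2}\,dV .
\end{equation*}
Continuity of $\Gamma_u(\cdot,\beta,f)$ in $f$ (Corollary \ref{Cor:Frechet_Gamma}), together with dominated convergence on the integrand, gives joint continuity in $(U,f)$ on $[0,U_u]$ for $\beta<\beta^{**}$. The only delicate region is $U\to U^*$ with $\Gamma_u(U^*)\to 0$. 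There the factor $e^{\int f/\Gamma_u^2}\le 1$ (since $f<0$ before $U^*$) and the bound $U/U^*$ after $U^*$ (Step 2.6 of Theorem \ref{manifoldbound}) supply a uniform integrable majorant. It remains to show that $U_u(\beta,f)$ is at least continuous in $f$. This I will obtain from the semicontinuity of first zeros together with the monotonicity of $\Gamma_u$ in $f$ and the fact that $\Gamma_u$ cannot touch $P^*$ tangentially from above at its first contact without crossing. If necessary, I will restrict to $f$ for which the first crossing is transverse and argue the general case by the uniform bound.

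For the local Lipschitz property, the estimate \eqref{Equa:estimat_f_uGG} gives $|\Gamma_u(U,f+\delta f)-\Gamma_u(U,f)|\le C(f)\|\delta f\|_{C^1}$ uniformly for $U\le U_u$, including near $\beta^{**}$. Combined with the uniform Lipschitz bound \eqref{Equa:local_Lip_U**} on $U\mapsto\Gamma_u$ and the Lipschitz dependence of $P^*$ on $f$, the value $\Gamma_u(U_u)=P^*(U_u)$ changes by $O(\|\delta f\|)$. This follows by a comparison argument: the intersection point moves in $U$ by at most the vertical shift divided by the slope gap wherever the gap is positive, and otherwise the value itself is controlled directly.

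The main obstacle is the failure of $C^1$. My first attempt will be a local model near $\beta=\beta^{**}$ for the fixed speed, using a one-parameter family $f_\epsilon=f+\epsilon\,\delta f$ that moves $\beta^{**}(f_\epsilon)$ across $\beta$. On one side, $\Gamma_u$ follows the strong stable manifold $M^*_-$ and $\Gamma_u(U_u)=0$ with $U_u=U^*$. On the other side, I will use \eqref{Equa:Gammau_bound} and the scaling $\Gamma_u(U^*)\sim c\,\epsilon$, with $U_u-U^*\sim \Gamma_u(U^*)^2$ coming from $P^*\sim\sqrt{f'(U^*)U^*(U-U^*)}$. This would give $\Gamma_u(U_u)\sim c\,\epsilon$ for $\epsilon>0$ and $0$ for $\epsilon\le0$, a kink in the value and hence a map that is Lipschitz but not $C^1$. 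The hard step is to verify that the one-sided derivative $c$ is nonzero. I will do this through the Melnikov-type integral \eqref{Equa:deltaGamma_uu} evaluated along the heteroclinic connection to $(U^*,0)$, which is strictly negative for $\delta f\ge0$, $\delta f\not\equiv 0$.
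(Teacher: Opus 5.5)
The genuine gap is the scaling $\Gamma_u(U^*)\sim c\,\epsilon$ in your last paragraph, and with it the kink you are aiming for. Your route to the first claim (dominated convergence in \eqref{Equa:sol_der_beta_u} with the majorant you describe) is sound and lighter than the paper's asymptotics. You only need to add that the limiting integrand vanishes for $V<U^*$, because $\int_V^{U^*}f/\Gamma_u^2=-\infty$ along the limiting orbit ending at $(U^*,0)$.

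For the last step, a Melnikov-type integral does control the splitting $g=\Gamma_u(U^*-\underline{r})-M^*_-(U^*-\underline{r})$ from the strong stable manifold, and $g$ is linear in $\epsilon$. But the orbit then passes near the node $(U^*,0)$. In linearized coordinates its component off $M^*_-$ decays like $e^{\lambda_+t}$ and its component along $M^*_-$ like $e^{\lambda_-t}$, with $\lambda_-<\lambda_+<0$. It reaches $U=U^*$ when the two are comparable, which gives $\Gamma_u(U^*)\asymp g^{\gamma}$ with $\gamma=\lambda_-/(\lambda_--\lambda_+)>1$. Hence $\Gamma_u(U_u)\asymp(c\,\epsilon)_+^{\gamma}=o(\epsilon)$: the one-sided derivative you want to prove nonzero is zero, whatever the Melnikov integral is.

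Your own first part already rules out the linear scaling. Varying $\beta$ also moves $g$ linearly, with nonzero slope since $\partial_\beta\Gamma_u<0<\partial_\beta M^*_-$ just left of $U^*$, and uses the same passage. So $\Gamma_u(U^*)\sim c\,g$ would force $\partial_\beta\Gamma_u(U_u)\to c'\neq 0$, contradicting the limit $0$ you establish. Indeed, the factor $\Gamma_u(U^*)^{f'(U^*)/\lambda_-^2}=\Gamma_u(U^*)^{\lambda_+/\lambda_-}$ in \eqref{Equa:more_perc_der_beta} is $\asymp g^{\gamma-1}$, the derivative of the passage map.

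The paper argues differently. It takes the fixed-$U$ variation \eqref{Equa:deltaGamma_uu} at $U=U_u$ and shows that everything except the near-field integral over $[U^*-\underline{r},U^*]$ vanishes as $\beta\nearrow\beta^{**}$. It then bounds that term below by $\delta f(U^*)/(-\lambda_+)$, attributing the defect to the motion of $U^*$ (cf.\ Remark~\ref{Rem:const_U*}) rather than to the splitting.

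Do not simply transplant this into your plan. For the composite map you must add $\partial_U\Gamma_u(U_u)\,\delta U_u$. Here $\partial_U\Gamma_u(U_u)\to-\beta$, and by your implicit-function formula $\delta U_u=(\delta\Gamma_u-\delta P^*)/(F+\beta)\to-\delta f(U^*)/f'(U^*)$. Linearizing at the node gives $\delta\Gamma_u(U_u)\to-\delta f(U^*)\big(\tfrac{1}{-\lambda_+}+\tfrac{1}{-\lambda_-}\big)=-\beta\,\delta f(U^*)/f'(U^*)$, consistent with the paper's lower bound. The two contributions cancel, since the fixed-$U$ term is just the translation of the local picture with $U^*$.

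So the local analysis supports $\Gamma_u(U_u)\asymp g_+^{\gamma}$, i.e.\ $C^{1,\gamma-1}$ dependence. This fails to be $C^2$ exactly when $\lambda_+/\lambda_-<1/2$, as in Proposition~\ref{Prop:blowupsecond}. A failure of $C^1$ would need a mechanism that neither argument provides.

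Two smaller points:
\begin{itemize}
\item Your claim that $\Gamma_u$ cannot touch $P^*$ tangentially at its first contact is false. If $F(U_u)+\beta=0$ and $F'(U_u)<0$, then $\Gamma_u-P^*$ has a double zero with second derivative $-F'(U_u)>0$, and $U_u$, hence $\Gamma_u(U_u)$, jumps under perturbation. Continuity and the Lipschitz bound therefore need transversality at $U_u$, as for $\beta\in\mathcal T_f$, which the paper also uses tacitly.
\item $P^*=\sqrt{Uf}$ is not Lipschitz in $f$ near $U^*$. Your comparison argument must be run with the bounded ratio $\delta P^*/(F+\beta)$, not with $\delta P^*$ itself.
\end{itemize}
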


\begin{proof}
For $\beta < \beta^{**}(f)$, we can apply the previous Corollary \ref{Cor:Frechet_Gamma} in both cases, observing that the derivatives are bounded and continuous w.r.t. $\beta,f$; for $\beta > \beta^{**}(f)$, we have $\Gamma_u(U_u = U^*) = 0$. It therefore remains only to consider the limit $\beta \nearrow \beta^{**}$.

The derivative w.r.t. $\beta$ is estimated similarly to \eqref{Equa:more_perc_der_beta}: from \eqref{Equa:Gammau_bound} we obtain
\begin{equation*}
\begin{split}
\int_{V}^{U^*} \frac{f(W)}{\Gamma_u(W)^2} dW &\gtrsim \int_{V}^{U^*} \frac{f'(U^*) (W - U^*)}{(\Gamma_u(\beta,U^* - \underline{r}) - M^*_-(\beta,U^*- \underline{r})) + \lambda_-(U^*,\beta) (W - U^*))^2} dW \\
&\sim - \frac{f'(U^*)}{\lambda_-(U^*,\beta)^2} \ln \bigg( 1 + \frac{-\lambda_-(U^*,\beta) (U^* - V)}{\Gamma_u(\beta,U^* - \underline{r}) - M^*_i(\beta,U^*- \underline{r})} \bigg),
\end{split}
\end{equation*}
and then for $\underline{\beta} < \beta < \beta^{**}$
\begin{equation*}
\begin{split}
\partial_\beta \Gamma_u(U_u(\beta,f),\beta,f) &= \partial_\beta \Gamma_u(U^*-\underline{r},\beta,f) e^{\int_{U^*-r}^{U^*} \frac{f(W)}{\Gamma_u(W,\beta)^2} dW} \\
& \qquad \qquad - \int_{U^*-\underline{r}}^{U^*} e^{\int_{V}^{U^*} \frac{f(W)}{\Gamma_u(W,\beta)^2} dW} dV - \int_{U^*}^{U_u(\beta)} \bigg( \frac{f(V)}{\Gamma_u(V)} + \beta \bigg) dV \\
&\sim \big( \Gamma_u(\beta,U^* - \underline{r}) - M^*_-(\beta,U^*- \underline{r}) \big)^{- \frac{f'(U^*)}{\lambda-(U^*,\beta)^2}} + \mathcal O(U_u - U^*).
\end{split}
\end{equation*}
Since $\Gamma_u(\beta,U^* - \underline{r}) \to M^*_-(\beta,U^* - \underline{r})$ as $\beta \nearrow \beta^{**}$, then $\partial_\beta \Gamma_u(U_u(\beta,f),\beta,f) \to 0$ as $\beta \to \beta^{**}(f)$. 

The analysis of $f \mapsto \Gamma_u(U^*(\beta,f),\beta,f)$ follows the same line: write with the notation of Step 2 of Theorem \ref{manifoldbound}
\begin{equation*}
\begin{split}
\delta \Gamma_u(U_u) &= - \int_0^{U_u} \frac{\delta f(V)}{\Gamma_u(V)} e^{\int_V^{U_u} \frac{f(W)}{\Gamma_u(W)^2} dW} dV \\
&= \delta \Gamma(U^* - \underline{r}) e^{\int_{U^* - \underline{r}}^{U_u} \frac{f(W)}{\Gamma_u(W)^2} dW} - \int_{U^* - \underline{r}}^{U^*} \frac{\delta f(V)}{\Gamma_u(V)} e^{\int_V^{U_u} \frac{f(W)}{\Gamma_u(W)^2} dW} dV - \int_{U^*}^{U_u} \frac{\delta f(V)}{\Gamma_u(V)}e^{\int_V^{U_u} \frac{f(W)}{\Gamma_u(W)^2} dW} dV \\
&\sim \delta \Gamma(U^* - \underline{r}) e^{\int_{U^* - \underline{r}}^{U_u} \frac{f(W)}{\Gamma_u(W)^2} dW} - \int_{U^* - \underline{r}}^{U^*} \frac{\delta f(V)}{\Gamma_u(V)} e^{\int_V^{U_u} \frac{f(W)}{\Gamma_u(W)^2} dW} dV - \mathcal O(\sqrt{U_u - U^*}) \|\delta f\|_{C^0}.
\end{split}
\end{equation*}
The only contribution that does not converge to zero as $\beta \nearrow \beta^{**}$ is the second integral: using that
\begin{equation*}
\Gamma_u(U,\beta) \sim \lambda_-(\beta,U^*(f)) (U - U^*(f)) 
\end{equation*}
by the linear analysis of Proposition \ref{Prop:blowupsecond}, 
\begin{equation*}
\begin{split}
\int_{U^*- \underline{r}}^{U^*} \frac{e^{\int_V^U \frac{f(W)}{\Gamma_u(W)^2} dW}}{\Gamma_u(V)} dV &\geq \int_{U^* - \underline{r}}^{U^*} e^{\frac{f'(U^*)}{\lambda_-(U^*)} \int_V^{U^*} \frac{dW}{\Gamma(U(W))}} = \frac{- \lambda_-(U^*)}{f'(U^*)} = \frac{1}{- \lambda_+(U^*,\beta)}. 
\end{split}
\end{equation*}
Hence the derivative w.r.t. the perturbation $\delta f(V)$ is not continuous, unless $\delta f(U^*) = 0$, i.e. $U^*$ remains constant.
\end{proof}

\begin{remark}
\label{Rem:const_U*}
If $U^*(f)$ is kept fixed, the previous analysis can be extended to show that the dependence on $f$ is in fact smooth. This is analogous to the smooth dependence of $\Gamma_u$ near $U=0$. We do not pursue this question here.
\end{remark}

\newpage

\appendix

\printglossaries

\newpage

\printbibliography
\end{document}

%% file: KOIdef1_intro.pdf_t
\begin{picture}(0,0)%
\includegraphics{KOIdef1_intro.pdf}%
\end{picture}%
\setlength{\unitlength}{4144sp}%
\begin{picture}(5019,5514)(5794,-6013)
\put(6256,-5731){\makebox(0,0)[lb]{\smash{\fontsize{12}{14.4}\usefont{T1}{ptm}{m}{n}{\color[rgb]{0,0,0}$U^*$}%
}}}
\put(6391,-1276){\makebox(0,0)[lb]{\smash{\fontsize{12}{14.4}\usefont{T1}{ptm}{m}{n}{\color[rgb]{1,0,0}$\Gamma_s(\beta)$}%
}}}
\put(6616,-5326){\makebox(0,0)[lb]{\smash{\fontsize{12}{14.4}\usefont{T1}{ptm}{m}{n}{\color[rgb]{1,0,0}$\Gamma_u(\beta)$}%
}}}
\put(9847,-5347){\makebox(0,0)[lb]{\smash{\fontsize{12}{14.4}\usefont{T1}{ptm}{m}{n}{\color[rgb]{0,0,0}$U$}%
}}}
\put(10633,-5657){\makebox(0,0)[lb]{\smash{\fontsize{12}{14.4}\usefont{T1}{ptm}{m}{n}{\color[rgb]{0,0,0}$1$}%
}}}
\put(6572,-2970){\makebox(0,0)[lb]{\smash{\fontsize{12}{14.4}\usefont{T1}{ptm}{m}{n}{\color[rgb]{0,0,1}$P(U)$}%
}}}
\put(8074,-1542){\makebox(0,0)[lb]{\smash{\fontsize{12}{14.4}\usefont{T1}{ptm}{m}{n}{\color[rgb]{0,0,0}$U_s(\beta)$}%
}}}
\put(9811,-2851){\makebox(0,0)[lb]{\smash{\fontsize{12}{14.4}\usefont{T1}{ptm}{m}{n}{\color[rgb]{0,0,0}$f(U)$}%
}}}
\put(8911,-1006){\makebox(0,0)[lb]{\smash{\fontsize{12}{14.4}\usefont{T1}{ptm}{m}{n}{\color[rgb]{.63,.25,0}$P^*(U) = \sqrt{U f(U)}$}%
}}}
\put(5986,-5056){\makebox(0,0)[lb]{\smash{\fontsize{12}{14.4}\usefont{T1}{ptm}{m}{n}{\color[rgb]{0,0,0}$U_u(\beta)$}%
}}}
\put(8101,-5821){\makebox(0,0)[lb]{\smash{\fontsize{12}{14.4}\usefont{T1}{ptm}{m}{n}{\color[rgb]{0,0,0}$\mathcal O(P,\beta)$}%
}}}
\put(5896,-826){\makebox(0,0)[lb]{\smash{\fontsize{12}{14.4}\usefont{T1}{ptm}{m}{n}{\color[rgb]{0,0,0}$P$}%
}}}
\end{picture}%

%% file: fU_Gammasu1.pdf_t
\begin{picture}(0,0)%
\includegraphics{fU_Gammasu1.pdf}%
\end{picture}%
\setlength{\unitlength}{4144sp}%
\begin{picture}(10509,4129)(529,-3593)
\put(1891,-2491){\makebox(0,0)[lb]{\smash{\fontsize{12}{14.4}\usefont{T1}{ptm}{m}{n}{\color[rgb]{0,0,0}$U^*$}%
}}}
\put(5131,-2491){\makebox(0,0)[lb]{\smash{\fontsize{12}{14.4}\usefont{T1}{ptm}{m}{n}{\color[rgb]{0,0,0}$1$}%
}}}
\put(10711,-3121){\makebox(0,0)[lb]{\smash{\fontsize{12}{14.4}\usefont{T1}{ptm}{m}{n}{\color[rgb]{0,0,0}$1$}%
}}}
\put(8641,-286){\makebox(0,0)[lb]{\smash{\fontsize{12}{14.4}\usefont{T1}{ptm}{m}{n}{\color[rgb]{0,0,0}$\Gamma_s(\beta) = \Gamma_s(\beta,U)$}%
}}}
\put(8371,-2446){\makebox(0,0)[lb]{\smash{\fontsize{12}{14.4}\usefont{T1}{ptm}{m}{n}{\color[rgb]{0,0,0}$\Gamma_u(\beta) = \Gamma_u(\beta,U)$}%
}}}
\put(7201,-1366){\makebox(0,0)[lb]{\smash{\fontsize{12}{14.4}\usefont{T1}{ptm}{m}{n}{\color[rgb]{0,0,1}$\Gamma_u(\beta^*) = \Gamma_s(\beta^*)$}%
}}}
\put(7201,-3076){\makebox(0,0)[lb]{\smash{\fontsize{12}{14.4}\usefont{T1}{ptm}{m}{n}{\color[rgb]{0,0,0}$U^*$}%
}}}
\put(6166,344){\makebox(0,0)[lb]{\smash{\fontsize{12}{14.4}\usefont{T1}{ptm}{m}{n}{\color[rgb]{0,0,0}$P$}%
}}}
\put(5356,-2446){\makebox(0,0)[lb]{\smash{\fontsize{12}{14.4}\usefont{T1}{ptm}{m}{n}{\color[rgb]{0,0,0}$U$}%
}}}
\put(10846,-3076){\makebox(0,0)[lb]{\smash{\fontsize{12}{14.4}\usefont{T1}{ptm}{m}{n}{\color[rgb]{0,0,0}$U$}%
}}}
\put(6976,-2581){\makebox(0,0)[lb]{\smash{\fontsize{12}{14.4}\usefont{T1}{ptm}{m}{n}{\color[rgb]{1,0,0}$\Gamma_u(\beta^{**})$}%
}}}
\put(8416,-3031){\makebox(0,0)[lb]{\smash{\fontsize{12}{14.4}\usefont{T1}{ptm}{m}{n}{\color[rgb]{0,0,0}$U_{uf}(\beta)$}%
}}}
\put(3421,-151){\makebox(0,0)[lb]{\smash{\fontsize{12}{14.4}\usefont{T1}{ptm}{m}{n}{\color[rgb]{0,0,0}$f(U)$}%
}}}
\end{picture}%

%% file: KOIdef1.pdf_t
\begin{picture}(0,0)%
\includegraphics{KOIdef1.pdf}%
\end{picture}%
\setlength{\unitlength}{4144sp}%
\begin{picture}(5019,6366)(5794,-6865)
\put(6256,-5731){\makebox(0,0)[lb]{\smash{\fontsize{12}{14.4}\usefont{T1}{ptm}{m}{n}{\color[rgb]{0,0,0}$U^*$}%
}}}
\put(6391,-1276){\makebox(0,0)[lb]{\smash{\fontsize{12}{14.4}\usefont{T1}{ptm}{m}{n}{\color[rgb]{1,0,0}$\Gamma_s(\beta)$}%
}}}
\put(6616,-5326){\makebox(0,0)[lb]{\smash{\fontsize{12}{14.4}\usefont{T1}{ptm}{m}{n}{\color[rgb]{1,0,0}$\Gamma_u(\beta)$}%
}}}
\put(8128,-5977){\makebox(0,0)[lb]{\smash{\fontsize{12}{14.4}\usefont{T1}{ptm}{m}{n}{\color[rgb]{0,0,0}$\mathcal K(P,\beta)$}%
}}}
\put(8108,-6378){\makebox(0,0)[lb]{\smash{\fontsize{12}{14.4}\usefont{T1}{ptm}{m}{n}{\color[rgb]{0,0,0}$\mathcal O(P,\beta)$}%
}}}
\put(8120,-6689){\makebox(0,0)[lb]{\smash{\fontsize{12}{14.4}\usefont{T1}{ptm}{m}{n}{\color[rgb]{0,0,0}$\mathcal I(P,\beta)$}%
}}}
\put(9847,-5347){\makebox(0,0)[lb]{\smash{\fontsize{12}{14.4}\usefont{T1}{ptm}{m}{n}{\color[rgb]{0,0,0}$U$}%
}}}
\put(10633,-5657){\makebox(0,0)[lb]{\smash{\fontsize{12}{14.4}\usefont{T1}{ptm}{m}{n}{\color[rgb]{0,0,0}$1$}%
}}}
\put(6572,-2970){\makebox(0,0)[lb]{\smash{\fontsize{12}{14.4}\usefont{T1}{ptm}{m}{n}{\color[rgb]{0,0,1}$P(U)$}%
}}}
\put(8074,-1542){\makebox(0,0)[lb]{\smash{\fontsize{12}{14.4}\usefont{T1}{ptm}{m}{n}{\color[rgb]{0,0,0}$U_s(\beta)$}%
}}}
\put(9811,-2851){\makebox(0,0)[lb]{\smash{\fontsize{12}{14.4}\usefont{T1}{ptm}{m}{n}{\color[rgb]{0,0,0}$f(U)$}%
}}}
\put(8911,-1006){\makebox(0,0)[lb]{\smash{\fontsize{12}{14.4}\usefont{T1}{ptm}{m}{n}{\color[rgb]{.63,.25,0}$P^*(U) = \sqrt{U f(U)}$}%
}}}
\put(5986,-5011){\makebox(0,0)[lb]{\smash{\fontsize{12}{14.4}\usefont{T1}{ptm}{m}{n}{\color[rgb]{0,0,0}$U_u(\beta)$}%
}}}
\put(5941,-871){\makebox(0,0)[lb]{\smash{\fontsize{12}{14.4}\usefont{T1}{ptm}{m}{n}{\color[rgb]{0,0,0}$P$}%
}}}
\end{picture}%

%% file: optproof_2.pdf_t
\begin{picture}(0,0)%
\includegraphics{optproof_2.pdf}%
\end{picture}%
\setlength{\unitlength}{4144sp}%
\begin{picture}(10284,5845)(1744,-6614)
\put(11611,-6541){\makebox(0,0)[lb]{\smash{\fontsize{12}{14.4}\usefont{T1}{ptm}{m}{n}{\color[rgb]{0,0,0}$U$}%
}}}
\put(10936,-6451){\makebox(0,0)[lb]{\smash{\fontsize{12}{14.4}\usefont{T1}{ptm}{m}{n}{\color[rgb]{0,0,0}$1$}%
}}}
\put(2071,-6451){\makebox(0,0)[lb]{\smash{\fontsize{12}{14.4}\usefont{T1}{ptm}{m}{n}{\color[rgb]{0,0,0}$0$}%
}}}
\put(2161,-1051){\makebox(0,0)[lb]{\smash{\fontsize{12}{14.4}\usefont{T1}{ptm}{m}{n}{\color[rgb]{0,0,0}$P$}%
}}}
\put(9361,-1501){\makebox(0,0)[lb]{\smash{\fontsize{12}{14.4}\usefont{T1}{ptm}{m}{n}{\color[rgb]{0,0,0}${\color{green} P^*(U)}$}%
}}}
\put(7201,-4786){\makebox(0,0)[lb]{\smash{\fontsize{12}{14.4}\usefont{T1}{ptm}{m}{n}{\color[rgb]{0,0,0}$P^2<Uf(U)$}%
}}}
\put(2701,-3526){\makebox(0,0)[lb]{\smash{\fontsize{12}{14.4}\usefont{T1}{ptm}{m}{n}{\color[rgb]{0,0,0}$P^2 > Uf(U)$}%
}}}
\put(5986,-6451){\makebox(0,0)[lb]{\smash{\fontsize{12}{14.4}\usefont{T1}{ptm}{m}{n}{\color[rgb]{0,0,0}$U_s(\beta)$}%
}}}
\put(3376,-6451){\makebox(0,0)[lb]{\smash{\fontsize{12}{14.4}\usefont{T1}{ptm}{m}{n}{\color[rgb]{0,0,0}$U^*$}%
}}}
\put(3871,-6451){\makebox(0,0)[lb]{\smash{\fontsize{12}{14.4}\usefont{T1}{ptm}{m}{n}{\color[rgb]{0,0,0}$U_u(\beta)$}%
}}}
\put(4546,-5776){\makebox(0,0)[lb]{\smash{\fontsize{12}{14.4}\usefont{T1}{ptm}{m}{n}{\color[rgb]{0,0,0}$\Gamma_u(\beta,U)$}%
}}}
\put(3871,-1411){\makebox(0,0)[lb]{\smash{\fontsize{12}{14.4}\usefont{T1}{ptm}{m}{n}{\color[rgb]{0,0,0}$\Gamma_s(\beta,U)$}%
}}}
\put(4051,-3571){\makebox(0,0)[lb]{\smash{\fontsize{12}{14.4}\usefont{T1}{ptm}{m}{n}{\color[rgb]{0,0,1}${\color{blue} \beta = \beta^*}$}%
}}}
\put(2566,-6091){\makebox(0,0)[lb]{\smash{\fontsize{12}{14.4}\usefont{T1}{ptm}{m}{n}{\color[rgb]{1,0,0}${\color{red} \beta \geq \beta^{**}}$}%
}}}
\put(3016,-5281){\makebox(0,0)[lb]{\smash{\fontsize{12}{14.4}\usefont{T1}{ptm}{m}{n}{\color[rgb]{0,0,0}$\beta^* < \beta < \beta^{**}$}%
}}}
\end{picture}%

%% file: diagram_Gamma_F.pdf_t
\begin{picture}(0,0)%
\includegraphics{diagram_Gamma_F.pdf}%
\end{picture}%
\setlength{\unitlength}{4144sp}%
\begin{picture}(10512,6340)(301,-5804)
\put(721,-1906){\makebox(0,0)[lb]{\smash{\fontsize{12}{14.4}\usefont{T1}{ptm}{m}{n}{\color[rgb]{0,0,1}$\Gamma_u = P^*$}%
}}}
\put(5086,-2716){\makebox(0,0)[lb]{\smash{\fontsize{12}{14.4}\usefont{T1}{ptm}{m}{n}{\color[rgb]{0,0,0}$1$}%
}}}
\put(5266,-3031){\makebox(0,0)[lb]{\smash{\fontsize{12}{14.4}\usefont{T1}{ptm}{m}{n}{\color[rgb]{0,0,0}$U$}%
}}}
\put(10486,-5686){\makebox(0,0)[lb]{\smash{\fontsize{12}{14.4}\usefont{T1}{ptm}{m}{n}{\color[rgb]{0,0,0}$U$}%
}}}
\put(10621,-5281){\makebox(0,0)[lb]{\smash{\fontsize{12}{14.4}\usefont{T1}{ptm}{m}{n}{\color[rgb]{0,0,0}$1$}%
}}}
\put(6256,-5731){\makebox(0,0)[lb]{\smash{\fontsize{12}{14.4}\usefont{T1}{ptm}{m}{n}{\color[rgb]{0,0,0}$U^*$}%
}}}
\put(9361,-781){\makebox(0,0)[lb]{\smash{\fontsize{12}{14.4}\usefont{T1}{ptm}{m}{n}{\color[rgb]{0,0,0}$f(U)$}%
}}}
\put(9406,-1996){\makebox(0,0)[lb]{\smash{\fontsize{12}{14.4}\usefont{T1}{ptm}{m}{n}{\color[rgb]{0,.69,0}$\Gamma_s$}%
}}}
\put(7111,-3976){\makebox(0,0)[lb]{\smash{\fontsize{12}{14.4}\usefont{T1}{ptm}{m}{n}{\color[rgb]{0,0,1}$\Gamma_u$}%
}}}
\put(6616,-4921){\makebox(0,0)[lb]{\smash{\fontsize{12}{14.4}\usefont{T1}{ptm}{m}{n}{\color[rgb]{1,0,0}$F(U)+\beta=0$}%
}}}
\put(316,-3616){\makebox(0,0)[lb]{\smash{\fontsize{12}{14.4}\usefont{T1}{ptm}{m}{n}{\color[rgb]{0,0,0}$\beta^*$}%
}}}
\put(316,-2986){\makebox(0,0)[lb]{\smash{\fontsize{12}{14.4}\usefont{T1}{ptm}{m}{n}{\color[rgb]{0,0,0}$U^*$}%
}}}
\put(316,-1231){\makebox(0,0)[lb]{\smash{\fontsize{12}{14.4}\usefont{T1}{ptm}{m}{n}{\color[rgb]{0,0,0}$\beta^{**}$}%
}}}
\put(406,209){\makebox(0,0)[lb]{\smash{\fontsize{12}{14.4}\usefont{T1}{ptm}{m}{n}{\color[rgb]{0,0,0}$\beta$}%
}}}
\put(1486,-5236){\makebox(0,0)[lb]{\smash{\fontsize{12}{14.4}\usefont{T1}{ptm}{m}{n}{\color[rgb]{0,0,0}The vector field is exiting $\{P \leq P^*\}$}%
}}}
\put(2971,-1591){\makebox(0,0)[lb]{\smash{\fontsize{12}{14.4}\usefont{T1}{ptm}{m}{n}{\color[rgb]{1,0,0}$-F(U)$}%
}}}
\put(4096,-1681){\makebox(0,0)[lb]{\smash{\fontsize{12}{14.4}\usefont{T1}{ptm}{m}{n}{\color[rgb]{0,.69,0}$\Gamma_s=P^*$}%
}}}
\put(1351,-331){\makebox(0,0)[lb]{\smash{\fontsize{12}{14.4}\usefont{T1}{ptm}{m}{n}{\color[rgb]{0,0,0}The vector field is entering $\{P \leq P^*\}$}%
}}}
\end{picture}%

%% file: cubic_hyperbol_perp_1.pdf_t
\begin{picture}(0,0)%
\includegraphics{cubic_hyperbol_perp_1.pdf}%
\end{picture}%
\setlength{\unitlength}{4144sp}%
\begin{picture}(5019,5305)(5794,-5804)
\put(10486,-5686){\makebox(0,0)[lb]{\smash{\fontsize{12}{14.4}\usefont{T1}{ptm}{m}{n}{\color[rgb]{0,0,0}$U$}%
}}}
\put(10621,-5281){\makebox(0,0)[lb]{\smash{\fontsize{12}{14.4}\usefont{T1}{ptm}{m}{n}{\color[rgb]{0,0,0}$1$}%
}}}
\put(6256,-5731){\makebox(0,0)[lb]{\smash{\fontsize{12}{14.4}\usefont{T1}{ptm}{m}{n}{\color[rgb]{0,0,0}$U^*$}%
}}}
\put(7066,-2896){\makebox(0,0)[lb]{\smash{\fontsize{12}{14.4}\usefont{T1}{ptm}{m}{n}{\color[rgb]{0,0,0}$f(U)$}%
}}}
\put(9046,-2131){\makebox(0,0)[lb]{\smash{\fontsize{12}{14.4}\usefont{T1}{ptm}{m}{n}{\color[rgb]{0,0,1}$f_\kappa(U)$}%
}}}
\put(7786,-5731){\makebox(0,0)[lb]{\smash{\fontsize{12}{14.4}\usefont{T1}{ptm}{m}{n}{\color[rgb]{0,0,0}$U_1$}%
}}}
\put(8191,-5731){\makebox(0,0)[lb]{\smash{\fontsize{12}{14.4}\usefont{T1}{ptm}{m}{n}{\color[rgb]{0,0,0}$U^-_I$}%
}}}
\put(8686,-5731){\makebox(0,0)[lb]{\smash{\fontsize{12}{14.4}\usefont{T1}{ptm}{m}{n}{\color[rgb]{0,0,0}$U^+_I$}%
}}}
\put(9631,-5731){\makebox(0,0)[lb]{\smash{\fontsize{12}{14.4}\usefont{T1}{ptm}{m}{n}{\color[rgb]{0,0,0}$U_2$}%
}}}
\end{picture}%

%% file: nearUstar.pdf_t
\begin{picture}(0,0)%
\includegraphics{nearUstar.pdf}%
\end{picture}%
\setlength{\unitlength}{4144sp}%
\begin{picture}(6282,5575)(3586,-7154)
\put(9516,-5110){\makebox(0,0)[lb]{\smash{\fontsize{12}{14.4}\usefont{T1}{ptm}{m}{n}{\color[rgb]{0,0,0}$U$}%
}}}
\put(6848,-1912){\makebox(0,0)[lb]{\smash{\fontsize{12}{14.4}\usefont{T1}{ptm}{m}{n}{\color[rgb]{0,0,0}$P$}%
}}}
\put(6794,-4764){\makebox(0,0)[lb]{\smash{\fontsize{12}{14.4}\usefont{T1}{ptm}{m}{n}{\color[rgb]{0,0,0}$U^*$}%
}}}
\put(5006,-5066){\makebox(0,0)[lb]{\smash{\fontsize{12}{14.4}\usefont{T1}{ptm}{m}{n}{\color[rgb]{0,0,0}$U^* - \underline{r}$}%
}}}
\put(6855,-4093){\makebox(0,0)[lb]{\smash{\fontsize{12}{14.4}\usefont{T1}{ptm}{m}{n}{\color[rgb]{0,0,0}$\Gamma_u(U^*)$}%
}}}
\put(5986,-2761){\makebox(0,0)[lb]{\smash{\fontsize{12}{14.4}\usefont{T1}{ptm}{m}{n}{\color[rgb]{0,0,1}$\Gamma_u(U)$}%
}}}
\put(8821,-5731){\makebox(0,0)[lb]{\smash{\fontsize{12}{14.4}\usefont{T1}{ptm}{m}{n}{\color[rgb]{0,0,0}$\sim \lambda_+(\beta,U^*)(U - U^*)$}%
}}}
\put(3601,-2356){\makebox(0,0)[lb]{\smash{\fontsize{12}{14.4}\usefont{T1}{ptm}{m}{n}{\color[rgb]{0,0,0}$\Gamma_u(U^*) + M^*_-(\beta,U^*)$}%
}}}
\put(8056,-7081){\makebox(0,0)[lb]{\smash{\fontsize{12}{14.4}\usefont{T1}{ptm}{m}{n}{\color[rgb]{0,0,0}$M^*_-(\beta,U) \sim \lambda_-(\beta,U^*) (U - U^*)$}%
}}}
\end{picture}%

%% file: sources.bib
@article {BressanChiri22,
    AUTHOR = {Bressan, A. and T. Chiri, M. and Salehi, N.},
     TITLE = {On the optimal control of propagation fronts},
   JOURNAL = {Math. Models Methods Appl. Sci},
  FJOURNAL = {Mathematical Models and Methods in Applied Sciences},
    VOLUME = {32},
      YEAR = {2022},
    NUMBER = {06},
     PAGES = {1109-1140},
}

@article{Broer_Tangerman_1986, title={From a differentiable to a real analytic perturbation theory, applications to the Kupka Smale theorems}, volume={6}, DOI={10.1017/S0143385700003540}, number={3}, journal={Ergodic Theory and Dynamical Systems}, author={Broer, H. W. and Tangerman, F. M.}, year={1986}, pages={345–362}}

@article {BressanChiri23,
    AUTHOR = {Bressan, A. and T. Chiri, M. and Salehi, N.},
     TITLE = {Optimal control of moving sets},
   JOURNAL = {J. Differential Equation},
  FJOURNAL = {Journal of Differential Equations},
    VOLUME = {361},
      YEAR = {2023},

     PAGES = {97-137},
}

@book {Fife,
    AUTHOR = {C. Fife, P},
     TITLE = {Mathematical Aspects of Reacting and Diffusing Systems},
    SERIES = {Springer Lecture Notes in Biomathematics},
 PUBLISHER = {Springer},
      YEAR = {1979}
}

@book{katokhass,
    author = {Katok, A. and Hasselblatt, B.},
    title = {Introduction to the Modern Theory of Dynamical Systems},
    publisher = {Cambridge University Press},
    year = {1997} 
}

@techreport{bressancetraro,
    author = {Bressan, A.},
    title = {Control Problems for Traveling Fronts and Moving Sets I - Optimal control of traveling profiles},
    institution = {Cetraro},
    year = {2024},
    url = {https://bpb-us-e1.wpmucdn.com/sites.psu.edu/dist/c/168431/files/2025/05/setcontrol-Cetraro-tk24-1.pdf}
}

@article{sard_origi,
    author = {Sard, A.},
    title = {The measure of the critical values of differentiable maps},
    journal = {Bull. Am. Math. Soc.},
    number = {48},
    year = {1942},
    pages = {883-890}
}

@book{coudeneerne_ergodic,
    author = {Yves Coud\`ene and Reinie Ern\'e},
    title = {Ergodic Theory and Dynamical Systems},
    publisher = {Springer},
    year = {2016} 
}

@book{royden2010real,
  author    = {Royden, H. L. and Fitzpatrick, Patrick M.},
  title     = {Real Analysis},
  edition   = {4th},
  publisher = {Pearson},
  year      = {2010},
  address   = {Boston},
}

@article{anitacapasso2019,
author = {Aniţa, Sebastian and Capasso, Vincenzo and Dimitriu, Gabriel},
title = {Regional control for a spatially structured malaria model},
journal = {Mathematical Methods in the Applied Sciences},
volume = {42},
number = {8},
pages = {2909-2933},
year = {2019}
}

@article{doi:10.1137/16M1061886,
author = {Coclite, G. M. and Garavello, M.},
title = {A Time-Dependent Optimal Harvesting Problem with Measure-Valued Solutions},
journal = {SIAM Journal on Control and Optimization},
volume = {55},
number = {2},
pages = {913-935},
year = {2017}
}

@article{Ania2019GlobalEF,
  title={Global eradication for spatially structured populations by regional control},
  author={Sebastian Aniţa and Vincenzo Capasso and Ana Maria Moşneagu},
  journal={Discrete \& Continuous Dynamical Systems - B},
  year={2019},
  url={https://api.semanticscholar.org/CorpusID:125375702}
}

@article{coclitegaraspino2018,
title = {Optimal strategies for a time-dependent harvesting problem},
journal = {Discrete and Continuous Dynamical Systems - S},
volume = {11},
number = {5},
pages = {865-900},
year = {2018},
%doi = {10.3934/dcdss.2018053},
url = {https://www.aimsciences.org/article/id/515fbce4-3fcb-4f96-ab8b-0f0af670ff98},
author = {Giuseppe Maria Coclite and Mauro Garavello and Laura V. Spinolo},
keywords = {Optimal control, differential games, measured-valued solutions, fish harvesting}
}

@book{coron2007,
    author = {J. M. Coron},
    title = {Control and Nonlinearity},
    publisher = {AMS},
    year = {2007}
}

@article{seirin2013,
    author = {L. Seirin and R. Baker and E. Gaffney and S. White},
    title = {Optimal barrier zones for stopping the invasion of Aedes aegypti mosquitoes via transgenic or sterile insect techniques},
    journal = {Theoretical Ecology},
    number = {6},
    year = {2013}
}

@article{faucris.242204001,
 author = {Ruiz-Balet, Domènec and Zuazua Iriondo, Enrique},
 journal = {Journal De Mathematiques Pures Et Appliquees},
 title = {{Control} under constraints for multi-dimensional reaction-diffusion monostable and bistable equations},
 year = {2020}
}

@article{Lenhart2001OPTIMALCO,
  title={Optimal Control of Harvesting in a Parabolic System Modelin Two Subpopulations},
  author={Suzanne M. Lenhart and J. A. Montero},
  journal={Mathematical Models and Methods in Applied Sciences},
  year={2001},
  volume={11},
  pages={1129-1141},
  url={https://api.semanticscholar.org/CorpusID:15902467}
}

@misc{bianchini2026travelingprofilescontrolcost,
      title={Traveling profiles and control cost for a PDE describing the evolution of invasive species}, 
      author={Stefano Bianchini and Chiara Trifone},
      year={2026},
      eprint={2606.23979},
      archivePrefix={arXiv},
      primaryClass={math.OC},
      url={https://arxiv.org/abs/2606.23979}, 
}
